\newcommand{\gl}{{\mathfrak g \mathfrak l}}
\newcommand{\g}{{\mathfrak g}}         
\newcommand{\cx}{{\mathbb C}}
\newcommand{\diag}{\operatorname{diag}}
\newcommand{\tr}{\operatorname{tr}}
\newcommand{\im}{\operatorname{Im}}
\newcommand{\Lie}{\operatorname{Lie}}
\newcommand{\Hom}{\operatorname{Hom}}
\newcommand{\Aut}{\operatorname{Aut}}
\newcommand{\TAut}{\operatorname{TAut}}
\newcommand{\Ker}{\operatorname{Ker}}
\newcommand{\supp}{\operatorname{supp}}
\newcommand{\Spec}{\operatorname{Spec}}
\newcommand{\Span}{\operatorname{span}}
\newcommand{\wh}{\widehat}
\newcommand{\ol}{\overline}
\numberwithin{equation}{section}
\newtheorem{theorem}{Theorem}[section]
\newtheorem{theorema}{Theorem}\newtheorem{theorema-cor}{Corollary}
\newtheorem{lemma}[theorem]{Lemma}
\newtheorem{corollary}[theorem]{Corollary}
\newtheorem{proposition}[theorem]{Proposition}
\theoremstyle{remark}
\newtheorem{remark}[theorem]{Remark}
\newtheorem{definition}[theorem]{Definition}
\newtheorem{ack}{Acknowledgment}
\newcommand{\R}{{\mathbb{R}}}
\newcommand{\oN}{{\mathbb{N}}}
\newcommand{\oP}{{\mathbb{P}}}
\newcommand{\oR}{{\mathbb{R}}}
\newcommand{\oZ}{{\mathbb{Z}}}
\newcommand{\sA}{{\mathcal{A}}}   
\newcommand{\sG}{{\mathcal{G}}}   
\newcommand{\sK}{{\mathcal{K}}}
\newcommand{\sL}{{\mathcal{L}}}   
\newcommand{\sR}{{\mathcal{R}}}
\newcommand{\sS}{{\mathcal{S}}}
\newcommand{\fG}{{\mathfrak{g}}}
\newcommand{\fH}{{\mathfrak{h}}}
\newcommand{\fL}{{\mathfrak{l}}}
\newcommand{\fM}{{\mathfrak{m}}}
\newcommand{\fS}{{\mathfrak{s}}}
\begin{document}

\title{On the symplectic structure of instanton moduli spaces}
\author{Roger Bielawski \and Victor Pidstrygach}
\address{}


\maketitle

\thispagestyle{empty}

The subject of this paper is the following family of complex affine varieties:
\begin{equation*} N_{k,\tau}=\left\{(A,B,i,j)\in M_{k,k}\times M_{k,k}\times M_{k,2}\times M_{2,k}; \enskip [A,B]-ij=\tau\cdot 1\right\}/GL(k,\cx),\label{inst}\end{equation*}
where $M_{k,l}$ denotes the space of complex $k\times l$-matrices and $\tau\in \cx$. The group $GL(k,\cx)$ acts by conjugation on $A,B$ and by left (resp. right) translations on $i$ (resp. $j$).
\par
It is well known that these varieties are related to the moduli spaces of (framed) $SU(2)$-instantons, i.e. anti-self-dual $SU(2)$-connections on $\oR^4$. More precisely, $N_{k,0}$ is the moduli space of {\em ideal} instantons, with the real instantons corresponding to the smooth locus of $N_{k,0}$. For $\tau\neq 0$, it has been shown by Nekrasov and Schwartz \cite{NS} that $N_{k,\tau}$ is the moduli space of instantons on a non-commutative $\oR^4$.
\par
The variety $N_{k,\tau}$ is a complex-symplectic quotient of a flat space, and, so, it carries a natural complex-symplectic structure. From a more general point of view, $N_{k,\tau}$ is an example of a quiver variety in the sense of Nakajima \cite{Nak}. In fact, using a trick of Crawley-Boevey \cite{C-B}, we can identify $N_{k,\tau}$ as a subvariety in the moduli space of representations in $(\cx^k,\cx)$ of the double of the following quiver:
\begin{equation}
\begin{texdraw}
\drawdim mm
\arrowheadsize l:1 w:1
\arrowheadtype t:H

\lcir r:2
\rmove (-2 0)
\ravec (0 -0.5)
\rmove (4 0.5)
\fcir f:0 r:0.7
\rmove (5.2 0)
\lellip rx:5 ry:1
\rmove (0 1)
\ravec (1 0)
\rmove (-1 -2)
\ravec (1 0)
\rmove (4.4 1)
\fcir f:0 r:0.7
\end{texdraw}
\label{Q1}
\end{equation}

Our starting point is the following trivial observation: the double of the quiver \eqref{Q1} coincides with the double of the following quiver $Q$:

\begin{equation}
\begin{texdraw}
\drawdim mm
\arrowheadsize l:1 w:1
\arrowheadtype t:H

\lcir r:2
\rmove (-2 0)
\ravec (0 -0.5)
\rmove (4 0.5)
\fcir f:0 r:0.7
\rmove (5.2 0)
\lellip rx:5 ry:1
\rmove (0 1)
\ravec (-1 0)
\rmove (1 -2)
\ravec (1 0)
\rmove (4.4 1)
\fcir f:0 r:0.7

\end{texdraw}
\label{Q2}
\end{equation}

This point of view turns out to be very useful. For example, we show:
\begin{theorema} \begin{itemize}
\item[(i)] The space $N_{k,\tau}$ is algebraically completely integrable.
\item[(ii)]  The Poisson algebra $\cx\bigl[N_{k,\tau}\times\cx^2]$ is isomorphic, up to localisation, to the Poisson algebra  $\bigl(\cx\bigl[T^\ast\cx^k\bigr]\bigr)^{S_k} \otimes \bigl(\cx\bigl[T^\ast\cx^{k+1}\bigr]\bigr)^{S_{k+1}}$.
\end{itemize}\end{theorema}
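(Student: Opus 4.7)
The plan is to exploit the Lagrangian splitting of $T^*\Rep(Q,(k,1))$ induced by the quiver $Q$ of~\eqref{Q2}: because the two outer arrows of $Q$ point in \emph{opposite} directions, the ``position'' half of the double $\overline Q$ consists of $(A,j_1,i_2)$ and the ``momentum'' half of $(B,i_1,j_2)$, so that exactly one of each pair $(i_\bullet,j_\bullet)$ lies in each Lagrangian summand. This is the crucial departure from the splitting associated with~\eqref{Q1}, whose position half $(A,j_1,j_2)$ carries no non-trivial $GL_k$-invariant scalars built from framing data.

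Part~(i) is then almost immediate. All $GL_k$-invariant polynomials in $(A,j_1,i_2)$ Poisson-commute on $N_{k,\tau}$. I would verify that the $2k$ functions
$$\tr A^m \quad\text{and}\quad j_1\,A^{m-1}i_2,\qquad m=1,\dots,k,$$
are generically functionally independent by passing to the open dense locus on which $i_2$ is cyclic for $A$ and using the companion form. To identify the generic fibres as open subsets of abelian varieties, I would introduce the spectral pencil $\tilde A+\zeta\tilde B$ built from the $(k{+}1)$-dimensional enlargement below, and recognise the fibres of $N_{k,\tau}\to\cx^{2k}$ as open subsets of Jacobians of the resulting curves in $\TP$.

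For part~(ii), I would construct a birational Poisson map
$$\Phi:\ N_{k,\tau}\times\cx^2\;\dashrightarrow\;\operatorname{Sym}^k(\cx^2)\times\operatorname{Sym}^{k+1}(\cx^2)$$
sending $(A,B,i_\bullet,j_\bullet,x,y)$ to $((\alpha_i,p_i)_{i=1}^k,(\beta_j,q_j)_{j=1}^{k+1})$, with $\alpha_i=\operatorname{spec}(A)$ and $\beta_j=\operatorname{spec}(\tilde A)$ for the enlarged matrix
$$\tilde A=\begin{pmatrix} A & i_2\\ -j_1 & x\end{pmatrix}\in M_{k+1,k+1},$$
and $(x,y)\in\cx^2$ the extra factor. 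A Schur-complement expansion
$$\det(zI_{k+1}-\tilde A)=(z-x)\det(zI_k-A)+j_1\operatorname{adj}(zI_k-A)\,i_2$$
exhibits the $\beta_j$ as polynomial functions of the position-Lagrangian invariants, so the ``position'' part of $\Phi$ is well defined. The momenta $(p_i)$ are extracted as the diagonal entries of $B$ after simultaneous diagonalisation with $A$ on the cyclic open set, and the $(q_j)$ as the analogous residues built from the enlarged $(k{+}1)$-dimensional data $(B,i_1,j_2,y)$. Birationality follows from the dimension count $4k+2=2k+2(k{+}1)$ together with the explicit inverse that reconstructs $(A,B,i_\bullet,j_\bullet,x,y)$ from the companion/spectral data.

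Poisson compatibility of $\Phi$ splits in two. All pairwise brackets among the $\alpha_i$'s and $\beta_j$'s vanish by part~(i), since all of these are functions of the position-Lagrangian data $(A,j_1,i_2,x)$ alone. The canonical relations $\{\alpha_i,p_j\}=\delta_{ij}$ reduce to the classical Calogero--Moser-type bracket computation in the companion basis of $(A,i_2)$. The main obstacle is the analogous identity $\{\beta_i,q_j\}=\delta_{ij}$: because the $\beta_j$'s come from the enlarged matrix $\tilde A$ rather than a matrix that is itself part of a symplectic pair, identifying the correct $q_j$'s and checking their brackets against the \emph{reduced} symplectic form on $N_{k,\tau}\times\cx^2$ requires careful accounting. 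The cleanest route is to realise the $(k{+}1)$-block enlargement as itself a symplectic reduction, with the extra $\cx^2$ playing the role of additional vertex data; Poisson compatibility then follows from reductions in stages, making the whole construction of $\Phi$ manifestly symplectic.
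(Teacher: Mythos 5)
Your overall strategy is the same as the paper's (enlarge the position Lagrangian $(A,i_2,j_1)$ plus the extra scalar into a $(k{+}1)\times(k{+}1)$ matrix, take the two spectra as positions, and look for conjugate momenta), but there is a genuine gap exactly where part (ii) has its content: the definition of the momenta $q_j$ conjugate to the eigenvalues $\beta_j$ of the enlarged matrix. Taking $p_i$ to be the diagonal of the full $B$ in the $A$-eigenbasis and $q_j$ to be "the analogous residues" of the full enlarged $B$-matrix in the $\tilde A$-eigenbasis does not give Darboux coordinates: the Hamiltonian flow of $\beta_j$ perturbs $B$ by the upper-left block of the corresponding spectral projector of $\tilde A$, whose diagonal in the $A$-eigenbasis is generically nonzero, so the cross brackets $\{\beta_j,p_i\}$ (and dually $\{\alpha_i,q_j\}$) do not vanish; the dimension count $4k+2=2k+2(k+1)$ cannot detect this. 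The missing idea is the unique decomposition $\wh B=B_1+B_2$ with $[A,B_1]=0$ and $[\wh A,B_2]\in\wh\tau+\fM^|$ (Propositions \ref{decompose} and \ref{decompose2}), where moreover the off-diagonal part of $B_2$ in the $\wh A$-eigenbasis is determined by the two spectra and $\tau$ alone; the correct momenta are the diagonal of $B_1$ and the diagonal of $gB_2g^{-1}$, not of $B$ and of the full enlarged matrix. Even with this definition, "reduction in stages" does not make the canonical relations automatic: the coordinates are not pullbacks from an intermediate quotient, and the paper must verify by hand (Proposition \ref{symplectomorphism}) that the residual term $\tr\bigl((\wh\tau+m)g^{-1}dg\bigr)$ is closed, using the special form of $\wh\tau$ and the fact that the last row of the explicit diagonalising matrix $g^{-1}$ is constant. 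Your proposal defers precisely this computation ("careful accounting"), which is the heart of the theorem.

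For part (i), your commuting family $\tr A^m$, $j_1A^{m-1}i_2$ and the cyclic-vector independence argument are fine (these generate the same invariants as the paper's $\tr A^m$, $\tr\wh A^j$, which carry the single relation $\Phi_1=\wh\Phi_1$). However, the spectral-pencil step is both unnecessary and incorrect as stated: the coefficients of $\det(z-\tilde A-\zeta\tilde B)$ involve $B$, so the curve is not constant along the common level sets of your Hamiltonians (which fix only the position-Lagrangian invariants), and the fibres cannot be identified with open subsets of Jacobians of those curves. The paper's generalised notion of algebraic complete integrability requires no abelian varieties; what it adds to independence and commutativity is the explicit algebraic abelian group \eqref{group} $\simeq\cx^{2k}$ of flows and its transitivity on generic fibres (Proposition \ref{trans}), and some statement of this kind is what you would need in place of the Jacobian claim.
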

Here ``algebraic complete integrability" is meant in the generalised sense, i.e. we do not require the common level sets of commuting Hamiltonians to be compact.
\par
Part (i) of the above theorem is known: it has been proved by Gibbons and Hermsen in \cite{GH} and it also follows from general results about quiver varieties. Indeed, let  $\sR(Q,V)\simeq M_{k,k}\times M_{k,1}\times M_{1,k}$ denote the space of representations of $Q$ in $V=(\cx^k,\cx)$. Let $Q^{\rm op}$ be a quiver obtained by reversing all arrows in \eqref{Q2}. Then $N_{k,\tau}$ is a subvariety in $\bigl(\sR(Q,V)\times\sR(Q^{\rm op},V)\bigr)/GL(k,\cx)$ and the Poisson algebra $\cx\bigl[N_{k,\tau}]$ contains two commutative Poisson subalgebras $\cx[\sR(Q,V)]^{GL(k,\cx)}$  and $\cx[\sR(Q^{\rm op},V)]^{GL(k,\cx)}$. It follows from a result of Bocklandt \cite{Bock} (see also \S 3 below) that the algebro-geometric quotient $\sR(Q,V)/GL(k,\cx)$ is smooth and hence the generators of $\cx[\sR(Q,V)]^{GL(k,\cx)}$ provide a sufficient number of independent commuting Hamiltonians.

\medskip

Our proof of part (i) identifies generators $\cx[\sR(Q,V)]^{GL(k,\cx)}$  different from those of Gibbons and Hermsen. In particular, we are able  to find (algebraic) Darboux coordinates on a Zariski-open subset of $N_{k,\tau}$, from which part (ii) follows. The quadratic Hamiltonian of Gibbons and Hermsen becomes in these coordinates a Hamiltonian on the phase space of indistinguishable particles of two types (see \S \ref{Ham}).

 We remark that Theorem 1  and other results obtained in the next three sections are very reminiscent of known results for the Calogero-Moser spaces \cite{KKS,W}, and we hope that this analogy can be pushed further. 

We then proceed to study the group of symplectomorphisms of $N_{k,\tau}$ and the Lie algebra of its Hamiltonian vector fields via  the non-commutative symplectic geometry. As for any quiver variety, the group of symplectomorphisms of $N_{k,\tau}$ contains  a homomorphic image of the group $\sS\sG$ of non-commutative symplectomorphisms of the path algebra $\cx \ol Q$ of the double of \eqref{Q2} (cf. \cite{Gin}). Similarly, the Lie algebra of  Hamiltonian vector fields on $N_{k,\tau}$ contains a homomorphic image of the so-called necklace algebra $\sL Q$ (cf. \cite{Gin,LBB}). We observe that the necklace algebra for the quiver \eqref{Q2} should be viewed as a non-commutative analogue of the Poisson algebra of  polynomial functions on $\cx^2\times \gl(2,\cx)$. We also discuss the structure of $\sS\sG$; in particular, we show that its Lie algebra ($\sS\sG$ is an algebraic ind-group) is strictly smaller than the Lie algebra  of symplectic derivations of $\cx \ol Q$. The main result in the second part is the following analogue of a theorem of Berest and Wilson \cite{BW}:
\begin{theorema} The action of $\sS\sG$ on $N_{k,\tau}$ is transitive, if $\tau\neq 0$.\end{theorema}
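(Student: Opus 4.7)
The plan is to adapt the argument of Berest--Wilson \cite{BW} for Calogero--Moser spaces to the present setting. For $\tau\neq 0$ the variety $N_{k,\tau}$ is smooth, irreducible, and connected---standard for Nakajima quiver varieties with a nonzero real parameter (equivalently, a smooth hyperk\"ahler manifold). Since $\sS\sG$ is an ind-algebraic group acting algebraically on $N_{k,\tau}$, its orbits are locally closed, so it suffices to show the action is \emph{infinitesimally transitive} at every point: at each $p\in N_{k,\tau}$ the image in $T_pN_{k,\tau}$ of the Lie algebra of $\sS\sG$ (a sub-Lie-algebra of $\sL Q$) spans $T_pN_{k,\tau}$. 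Granting this, every orbit is open, and connectedness forces there to be only one.

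To reduce infinitesimal transitivity to a finite check, I would introduce an algebraic torus $T\subset \sS\sG$. Its natural generators are (a)~the Hamiltonian flow of the cyclic word $\tr(AB)\in \sL Q$, which induces $(A,B)\mapsto(sA,s^{-1}B)$ on $N_{k,\tau}$; and (b)~the diagonal subgroup of the framing $GL(2,\cx)$, which acts on the pairs of parallel arrows $(i_1,i_2)$ and $(j_1,j_2)$ and embeds in $\sS\sG$ as a group of linear symplectic automorphisms of $\cx\ol Q$. A generic one-parameter subgroup $\phi_s\subset T$ is contracting: $\lim_{s\to 0}\phi_s(p)$ exists in $N_{k,\tau}$ and lies in the $T$-fixed locus $F$, by the standard Bia\l ynicki--Birula analysis of Nakajima quiver varieties. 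The set $F$ is finite and indexed by pairs of Young diagrams $(\lambda,\mu)$ with $|\lambda|+|\mu|=k$. Since infinitesimal transitivity is an open, $\sS\sG$-invariant condition and $\phi_s\in \sS\sG$ for each $s\in \cx^{\ast}$, it suffices to verify it at the finitely many points of $F$.

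At a fixed point $p_{\lambda,\mu}$ the tangent space decomposes into $T$-weight spaces, as does $\sL Q$. For every weight occurring in $T_{p_{\lambda,\mu}}N_{k,\tau}$ one must exhibit a necklace element of the same weight, lying in the Lie algebra of $\sS\sG$, whose Hamiltonian vector field is nonzero at $p_{\lambda,\mu}$. Candidate Hamiltonians are the cyclic traces $\tr(A^nB^m)$ and $\tr(A^n i_\alpha j_\beta B^m)$ with $\alpha,\beta\in\{1,2\}$, whose $T$-weights can be read off directly. The Darboux-type local coordinates near $p_{\lambda,\mu}$ afforded by Theorem~1(ii) should reduce the resulting nonvanishing question to a short tableau calculation in the spirit of \cite{BW}.

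The main obstacle is this last step, with two facets. First, the combinatorial family $\{p_{\lambda,\mu}\}$ grows with $k$ and has to be treated uniformly, which suggests organising the argument by induction on $k$ and using a ``box-removal'' operation to relate fixed points in $N_{k,\tau}$ to those in $N_{k-1,\tau}$. Second, as observed earlier in the paper, the Lie algebra of $\sS\sG$ is strictly smaller than the full space of symplectic derivations of $\cx\ol Q$, so one must be careful to use only necklace elements that actually exponentiate inside $\sS\sG$. The hypothesis $\tau\neq 0$ enters here precisely to ensure smoothness of $N_{k,\tau}$ at each $p_{\lambda,\mu}$ and nondegeneracy of the linearised moment-map equation, which is what makes the proposed elementary flows move each tangent direction nontrivially.
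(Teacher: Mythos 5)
There is a genuine gap, in fact two. First, your reduction to torus-fixed points does not work for $\tau\neq 0$. Since the equation $[A,B]-ij=\tau\cdot 1$ must be preserved, the only available one-parameter subgroups act hyperbolically on $(A,B)$ (weights summing to zero) together with the framing torus; the fully contracting scaling of $\cx^2$, which is what makes the Bia\l ynicki--Birula analysis and the pairs-of-partitions fixed-point classification work on $\tilde N_{k,0}$, rescales $\tau$ and so does not act on $N_{k,\tau}$. For a hyperbolic $\phi_s$ the limit $\lim_{s\to 0}\phi_s(p)$ simply does not exist for general $p$ in the affine variety $N_{k,\tau}$: the invariant function $\tr B^2$ (say) has strictly negative weight, so any point with $\tr B^2\neq 0$ escapes to infinity. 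Hence "it suffices to verify infinitesimal transitivity at the finitely many points of $F$" is unjustified, and the claimed description of $F$ by pairs of Young diagrams is the one appropriate to the resolution at $\tau=0$, not to $N_{k,\tau}$. (Openness of the infinitesimal-transitivity locus plus connectedness would only show that an orbit through a good point is open; without the contraction you cannot rule out smaller orbits elsewhere.) Second, even granting the reduction, the decisive step---producing, for every tangent weight at every fixed point, an element of $\Lie\sS\sG$ whose Hamiltonian vector field is nonzero there---is exactly what you leave as "the main obstacle", and it is delicate precisely because, as noted in \S\ref{GneqL}, $\Lie\sS\sG$ is strictly smaller than the algebra of symplectic derivations; also the Darboux chart of Theorem~1(ii) only covers the locus where $A$ is regular semisimple, which excludes any torus-fixed point (there $A$ is nilpotent), so it cannot be used for the local computation you envisage.

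For comparison, the paper avoids any fixed-point analysis: it shows that every point of $N_{k,\tau}$ can be moved by tame symplectomorphisms into the Calogero--Moser locus $M_{k,\tau}$ of \eqref{MM}, where transitivity of $\Aut\bigl(\cx\langle a,a^\ast\rangle;[a,a^\ast]\bigr)$ is Berest--Wilson \cite{BW}. The moves are explicit strictly triangular automorphisms $\Lambda(f)$ and the $GL(2,\cx)$ part of $\text{\rm Aff}_c$, combined with Shiota's lemma \ref{Shiota} to make $A$ regular semisimple; the obstruction is measured by the invariant $\tr(ji)^2$, and the points where it cannot be changed are shown, via the invariant-subspace analysis of Lemmas \ref{invariant}--\ref{equal}, to lead to a contradiction. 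If you want to salvage your scheme, you would need both a genuinely contracting torus inside $\sS\sG$ (which does not exist for $\tau\neq 0$) and the full weight-by-weight verification at fixed points; as it stands the proposal is a plan rather than a proof.
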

In fact, we show transitivity  of the (apriori strictly smaller) subgroup generated by  automorphisms of $\cx \ol Q$, which preserve either $\cx Q$ or $\cx Q^{\rm op}$. An immediate consequence of this is
\begin{theorema} The Poisson algebra $\cx\bigl[N_{k,\tau}]$ is generated by its two commutative Poisson subalgebras\, $\cx[\sR(Q,V)]^{GL(k,\cx)}$  and\, $\cx[\sR(Q^{\rm op},V)]^{GL(k,\cx)}$.\label{generators}\end{theorema}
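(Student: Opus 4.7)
Let $B\subset\cx[N_{k,\tau}]$ denote the Poisson subalgebra generated by $\cx[\sR(Q,V)]^{GL(k,\cx)}$ and $\cx[\sR(Q^{\rm op},V)]^{GL(k,\cx)}$. My goal is to show $B=\cx[N_{k,\tau}]$; equivalently, that the morphism $\phi:N_{k,\tau}\to\Spec(B)$ induced by $B\hookrightarrow\cx[N_{k,\tau}]$ is an isomorphism.

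The plan rests on a dictionary between the subgroup $H\subset\sS\sG$ of the sharpened Theorem 2 and Hamiltonian flows of elements of $B$. An automorphism $\sigma$ of $\cx\ol Q$ preserving $\cx Q$ induces a transformation of $\sR(\ol Q,V)$ commuting with the projection $\sR(\ol Q,V)\to\sR(Q,V)$; on the affine symplectic variety $N_{k,\tau}$, $\sigma$ therefore acts as a symplectomorphism, which being defined on an affine variety is necessarily Hamiltonian, and, via the trace map from the necklace algebra to $\cx[N_{k,\tau}]$, its Hamiltonian comes from a cyclic word involving only $Q$-arrows and so lies in $\cx[\sR(Q,V)]^{GL(k,\cx)}\subset B$.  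Symmetrically for $\cx Q^{\rm op}$-preserving automorphisms.  The whole subgroup $H$ is therefore integrated from Hamiltonian vector fields $X_f$ with $f\in B$.

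By Theorem 2, $H$ acts transitively on $N_{k,\tau}$, so $\{X_f(p):f\in B\}$ spans $T_pN_{k,\tau}$ at every point $p$; dualising via the symplectic form, $\{df|_p:f\in B\}$ spans $T^\ast_pN_{k,\tau}$, and $\phi$ is \'etale everywhere.  Theorem 1(ii) now supplies, on a Zariski-dense open subset of $N_{k,\tau}\times\cx^2$ (and hence of $N_{k,\tau}$ after inverting a suitable element of $B$), algebraic Darboux coordinates belonging to the two subalgebras, so that $\phi$ restricts to an open embedding there.  A birational \'etale morphism between irreducible varieties is an isomorphism, yielding $B=\cx[N_{k,\tau}]$.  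The hard part of the plan, and the only non-formal one, is the first step: justifying carefully that $\cx Q$-preserving (resp. $\cx Q^{\rm op}$-preserving) automorphisms of $\cx\ol Q$ act on $N_{k,\tau}$ as Hamiltonian flows of elements of $\cx[\sR(Q,V)]^{GL(k,\cx)}$ (resp. $\cx[\sR(Q^{\rm op},V)]^{GL(k,\cx)}$); this relies on the non-commutative symplectic and necklace-algebra formalism developed in the second half of the paper, but once that dictionary is in place, transitivity from Theorem 2 immediately delivers the generation statement.
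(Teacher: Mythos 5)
Your opening dictionary and the appeal to the sharpened Theorem 2 are exactly the paper's starting point, but the way you close the argument does not work. First, the birationality step rests on a false claim: the Darboux coordinates of Theorem 1(ii) are $\lambda_i,\wh\lambda_j,\mu_i,\wh\mu_j$, and only the $\lambda_i,\wh\lambda_j$ (symmetric functions of the eigenvalues of $A$ and $\wh A$) lie in $\cx[\sR(Q,V)]^{GL(k,\cx)}$; the conjugate variables $\mu_i,\wh\mu_j$ depend on $\wh B$ as well as $\wh A$ and lie in neither $\cx[\sR(Q,V)]^{GL(k,\cx)}$ nor $\cx[\sR(Q^{\rm op},V)]^{GL(k,\cx)}$ (the latter consists of invariant functions of $(B,i_2,j_1)$ alone). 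So generic injectivity of $\phi$ is not established. Second, even granting \'etaleness and birationality, the final implication ``birational \'etale $\Rightarrow$ isomorphism'' is false: the inclusion $\cx[t]\subset\cx[t,t^{-1}]$ induces an \'etale birational morphism of irreducible affine varieties which is not an isomorphism, and this is precisely the failure mode one must exclude when trying to prove $B=\cx\bigl[N_{k,\tau}\bigr]$; you would additionally need surjectivity (or finiteness) of $\phi$ together with normality of the target, and before any of that you would need to know that $B$ is finitely generated, so that $\Spec B$ is a variety at all --- none of which is supplied. (Smaller gaps: ``transitivity of $H$ implies $\{X_f(p)\}$ spans $T_pN_{k,\tau}$'' requires an orbit-theorem type argument, not just the word ``so''; and a symplectomorphism of an affine symplectic variety is not automatically Hamiltonian --- for the strictly triangular generators this holds only because they are visibly time-one flows of $\wh\tr$ of elements of $\cx Q$, while a general $\cx Q$-preserving automorphism, e.g. $a\mapsto\lambda a$, $a^\ast\mapsto\lambda^{-1}a^\ast$, has Hamiltonian in neither subalgebra.)

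The paper closes the argument globally rather than generically: after replacing $\TAut_R\bigl(\cx\ol Q;c\bigr)$ by $\mathrm{STAut}_R\bigl(\cx\ol Q;c\bigr)$ (which acts the same way on $N_{k,\tau}$), transitivity exhibits $N_{k,\tau}$ as a coadjoint orbit of this group, whose Lie algebra $\fL$ is generated by the two commutative subalgebras $\cx Q/[\cx Q,\cx Q]$ and $\cx Q^{\rm op}/[\cx Q^{\rm op},\cx Q^{\rm op}]$; hence $\cx\bigl[N_{k,\tau}\bigr]$ is a quotient of the symmetric algebra $S\fL$, and the images of these two pieces under the trace map are exactly the two subalgebras in the statement. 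That route delivers surjectivity of $S\fL\to\cx\bigl[N_{k,\tau}\bigr]$ directly, which is the very conclusion your \'etale-plus-birational scheme cannot reach; if you want to salvage your approach you must replace the last step by an argument of this kind (or by Zariski's main theorem after proving surjectivity, normality of $\Spec B$, and finite generation of $B$).
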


\section{Matrix interpretation of the quiver $Q$\label{alt}}

Recall, from the introduction,  the definition of the variety $N_{k,\tau}$. It should be interpreted as saying that $N_{k,\tau}$ is a complex-symplectic quotient of $T^\ast R_k$ by $GL(k,\cx)$, where 
$R_k=M_{k,k}\times M_{2,k}$. The moment map $\mu$ is
\begin{equation} \bigl(M_{k,k}\times M_{2,k}\bigr)\times \bigl(M_{k,k}\times M_{k,2}\bigr)\ni \bigl((A,j),(B,i)\bigr)\stackrel{\mu}{\longmapsto} [A,B]-ij.\label{moment}\end{equation}
Thus, $N_{k,\tau}=\mu^{-1}(\tau\cdot 1)/GL(k,\cx)$.
For $\tau\neq 0$, the action of $PGL(k,\cx)$ on $\mu^{-1}(\tau\cdot 1)$ is free, and all orbits are closed. Thus, $N_{k,\tau}$ is a manifold. For $\tau=0$, the quotient  should be understood as the affine-geometric quotient, i.e.
\begin{equation} N_{k,0}=\Spec \cx\bigl[ \mu^{-1}(0)\bigr]^{GL(k,\cx)}.\label{N_k_0} \end{equation}
In other words $N_{k,0}$ consists of closed orbits of the $GL(k,\cx)$-action on $\mu^{-1}(0)$.
Its smooth locus is isomorphic to the {\em moduli space of framed instantons of charge $k$}.


We now rephrase the definition of $N_{k,\tau}$, corresponding to the passage from \eqref{Q1} to \eqref{Q2}.

Let $(A,B,i,j)\in T^\ast R_k$, i.e. $A,B\in \gl(k,\cx)$, $i\in M_{k,2}$, $j\in M_{2,k}$. Let us write $i=(i_1,i_2)$ and $j=\begin{pmatrix} j_1 \\ j_2 \end{pmatrix}$, and define:
\begin{equation} \widehat{A}=\begin{pmatrix} A&i_1 \\ j_2& 0 \end{pmatrix}, \qquad
\widehat{B}= \begin{pmatrix} B  &i_2 \\ - j_1& 0 \end{pmatrix}.
\label{hats}
\end{equation}
The matrices $\wh{A},\wh{B}$ lie in $\gl(k+1,\cx)$ and can be wiewed as an element of $T^\ast \fS_{k+1}$, where $\fS_{k+1}$ consists of $(k+1)\times (k+1)$-matrices with zero $(k+1,k+1)$-entry.  We consider a subgroup $G\simeq GL(k,\cx)$ of  $GL(k+1,\cx)$, consisting of matrices of the form
$$\left(\begin{array}{cc}  L &  0 \\  0 & 1 \end{array}\right),$$
with $L\in GL(k,\cx)$. Its action on $T^\ast \fS_{k+1}$ is Hamiltonian, with the moment map $\mu_G$ given by the upper-left $k\times k$-minor of the commutator $[\wh{A},\wh{B}]$.
\par
A short calculation shows
\begin{equation}[\widehat{A},\widehat{B}] 
= \begin{pmatrix} [A,B] -ij&Ai_2 - B i_1  \\ j_2B + j_1A& j_1i_1+j_2i_2 \end{pmatrix},
\label{bracket}\end{equation}
and, hence:
\begin{proposition} The map $(A,B,i,j)\mapsto \bigl(\wh{A},\wh{B}\bigr)$ is  equivariant with respect to the actions of $GL(k,\cx)$ on $T^\ast R_k$ and $G$ on $T^\ast \fS_{k+1}$, and it induces a symplectic isomorphism between $N_{k,\tau}$ and the symplectic quotient $\mu_G^{-1}(\tau\cdot 1)/G$ of $T^\ast \fS_{k+1}$ by $G$.\label{iso1}\end{proposition}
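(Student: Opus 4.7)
The plan is to realize $\Phi:(A,B,i,j)\mapsto(\wh A,\wh B)$ as a $GL(k,\cx)$-equivariant symplectic linear isomorphism $T^\ast R_k\to T^\ast \fS_{k+1}$ that intertwines the two moment maps; the proposition will then follow by restricting to the level set $\tau\cdot 1$ and passing to quotients. The easy parts are immediate: a dimension count gives $\dim T^\ast R_k=2(k^2+2k)=2((k+1)^2-1)=\dim T^\ast \fS_{k+1}$, and the block formulas \eqref{hats} exhibit an explicit inverse, reading off the non-$(k+1,k+1)$ entries of $\wh A$ (resp.\ $\wh B$) to recover $(A,i_1,j_2)$ (resp.\ $(B,i_2,-j_1)$). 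Equivariance is a direct computation: with $\tilde L:=\diag(L,1)\in G$,
$$\tilde L\,\wh A\,\tilde L^{-1}=\begin{pmatrix} LAL^{-1} & Li_1 \\ j_2 L^{-1} & 0\end{pmatrix},$$
which is exactly $\Phi$ applied to the translate $(LAL^{-1},LBL^{-1},Li,jL^{-1})$; similarly for $\wh B$.

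Next I would check the symplectic property. Expanding $d\wh B\cdot d\wh A$ blockwise and taking the trace yields
$$\tr(d\wh B\wedge d\wh A)=\tr(dB\wedge dA)+\tr(di_2\wedge dj_2)-\tr(dj_1\wedge di_1),$$
and since $\tr(dj_1\wedge di_1)=-\tr(di_1\wedge dj_1)$ by anticommutativity of the wedge of matrix-valued 1-forms, the right-hand side becomes $\tr(dB\wedge dA)+\tr(di\wedge dj)$, the canonical 2-form on $T^\ast R_k$. For the moment maps, the infinitesimal $G$-action is $X\cdot\wh A=[\tilde X,\wh A]$ with $\tilde X=\diag(X,0)$; identifying $\Lie(G)^\ast\simeq\gl(k,\cx)$ via the trace pairing, one checks that $\mu_G(\wh A,\wh B)$ is the upper-left $k\times k$ block of $[\wh A,\wh B]$, which by \eqref{bracket} equals $[A,B]-ij=\mu(A,B,i,j)$.

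Putting these four ingredients together, $\Phi$ restricts to a $GL(k,\cx)$-equivariant isomorphism $\mu^{-1}(\tau\cdot 1)\to\mu_G^{-1}(\tau\cdot 1)$. For $\tau\neq 0$ both actions are free with closed orbits, so the induced map of smooth symplectic quotients is an isomorphism of symplectic manifolds; for $\tau=0$ the same conclusion follows by taking $GL(k,\cx)$-invariants of the induced isomorphism of coordinate rings, using \eqref{N_k_0}. I do not expect any genuine obstacle here---the statement is essentially a repackaging of \eqref{bracket}---and the only point requiring attention is the sign bookkeeping forced by the ``$-j_1$'' entry of $\wh B$, which is precisely what makes both the off-diagonal contributions to the symplectic form and the commutator identity come out consistently.
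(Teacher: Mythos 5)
Your proposal is correct and follows essentially the same route as the paper: equivariance and the moment-map identification are read off from the block formulas \eqref{hats} and the commutator computation \eqref{bracket}, and the symplectic property reduces to the blockwise trace identity $\tr\, d\wh{A}\wedge d\wh{B}=\tr\, dA\wedge dB+\tr\, dj\wedge di$ (your version, with $\wh B$ and $\wh A$ interchanged, is the same identity up to an overall sign). The extra details you supply (dimension count, explicit inverse, and the separate treatment of the quotients for $\tau\neq 0$ and $\tau=0$) are exactly the routine steps the paper leaves implicit.
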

\begin{proof} It only remains to check that that the symplectic forms agree, which follows from: $\tr d\wh{A}\wedge d\wh{B}=\tr dA\wedge dB+\tr dj\wedge di.$.\end{proof}
\begin{remark} Another interpretation of this is to say that we have chosen a $GL(k,\cx)$-invariant Lagrangian subspace in $T^\ast R_k$, different from the zero-section.\end{remark}

\section{An embedding $N_{k,\tau}\hookrightarrow N_{k+1,\tau}$}

This section is not used in the remainder of the paper. We wish to describe a rather surprising fact, namely the existence of a symplectic embedding from the $k$-instanton moduli space to the $(k+1)$-instanton moduli space, as well as an analogous map for torsion free sheaves on $\oP^2$.
\par
In the setup of the previous section, consider $(A,B,i,j)\in T^\ast R_k$ satisfying $[A,B]=ij+\tau\cdot 1$ and define the matrices $\wh{A},\wh{B}$ via \eqref{hats}. Formula \eqref{bracket} yields now:
\begin{equation*}
[\widehat{A},\widehat{B}]-\tau\cdot 1
= \begin{pmatrix} \tau\cdot 1 &Ai_2 - B i_1 \\ j_2B + j_1A& -k\tau \end{pmatrix}.
\label{hat_bracket}\end{equation*}
Thus, $[\widehat{A},\widehat{B}]-\tau\cdot 1$   is also of rank $2$, and we can write
$$[\widehat{A},\widehat{B}]=\hat{\imath}\hat{\jmath},$$
where $\hat{\imath}=(\hat{\imath}_1,\hat{\imath}_2)$, $\hat{\jmath}=\begin{pmatrix}\hat{\jmath}_1,\hat{\jmath}_2\end{pmatrix}$ with
$$ \hat{\imath}_1=(0,\dots,0,1)^T,\enskip \hat{\imath}_2=(Ai_2 - B i_1,-(k+1)\tau)^T,\enskip \hat{\jmath}_1=(j_2B + j_1A,0),\enskip  \hat{\jmath}_2=(0,\dots,0,1).$$
The assignment 
\begin{equation} (A,B,i,j)\longmapsto \bigl(\widehat{A},\widehat{B}, \hat{\imath},\hat{\jmath}\bigr)\label{embedd}\end{equation}
is $GL(k,\cx)$-equivariant and, hence, it induces a map $N_{k,\tau}\rightarrow N_{k+1,\tau}$. 
First of all, we have
\begin{proposition} The map $N_{k,\tau}\rightarrow N_{k+1,\tau}$ defined by \eqref{embedd} is a symplectic embedding.\label{emb1}\end{proposition}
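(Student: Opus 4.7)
The plan is to verify four things that together establish \eqref{embedd} as a symplectic embedding: (a) its image lies in $\mu^{-1}(\tau\cdot 1)\subset T^\ast R_{k+1}$; (b) it is $GL(k,\cx)$-equivariant with respect to the embedding $GL(k,\cx)\simeq G\hookrightarrow GL(k+1,\cx)$ of \S\ref{alt}; (c) the induced map on quotients is injective; (d) the canonical symplectic form on $T^\ast R_{k+1}$ pulls back to the canonical symplectic form on $T^\ast R_k$.

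Claim (a) is essentially done in the preamble of the proposition: taking the trace of $[A,B]-ij=\tau\cdot 1$ gives $j_1i_1+j_2i_2=-k\tau$, which identifies the $(k+1,k+1)$-entry of $[\widehat A,\widehat B]$, and then $[\widehat A,\widehat B]-\tau\cdot 1=\hat\imath\hat\jmath$ is a direct multiplication. For (b), one writes $\hat g$ for the $(k+1)\times(k+1)$-block matrix with $g$ in the upper-left and $1$ in the lower-right corner, and checks block by block that $\hat g\widehat A\hat g^{-1}$ reassembles from $gAg^{-1}, gi_1, j_2g^{-1}$, and similarly for $\hat g\widehat B\hat g^{-1}$, $\hat g\hat\imath$, $\hat\jmath\hat g^{-1}$; the distinguished vectors $e_{k+1}$ and $e_{k+1}^T$ giving the constant parts of $\hat\imath$ and $\hat\jmath$ are fixed by $\hat g$.

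The key step is (c). The image of \eqref{embedd} is distinguished inside $T^\ast R_{k+1}$ by, among other conditions, the normalisations $\hat\imath_1=e_{k+1}$ and $\hat\jmath_2=e_{k+1}^T$. If $g\in GL(k+1,\cx)$ carries one image point to another, then $ge_{k+1}=e_{k+1}$ and $e_{k+1}^Tg^{-1}=e_{k+1}^T$, forcing both the last column and the last row of $g$ to equal $e_{k+1}$; hence $g\in G$. (One checks that the remaining constraints defining the image---zero $(k+1,k+1)$-entries of $\widehat A,\widehat B$, and the constant last entries of $\hat\imath_2,\hat\jmath_1$---are automatically preserved by $G$.) Consequently, two image points are $GL(k+1,\cx)$-equivalent iff they are $G$-equivalent, iff their preimage quadruples are $GL(k,\cx)$-equivalent. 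Since the uplift $(A,B,i,j)\mapsto(\widehat A,\widehat B)$ is visibly injective (one reads $A,B,i_1,i_2,j_1,j_2$ off the blocks of $\widehat A,\widehat B$), injectivity of the induced map $N_{k,\tau}\to N_{k+1,\tau}$ follows.

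For (d), Proposition~\ref{iso1} already gives $\tr d\widehat A\wedge d\widehat B=\tr dA\wedge dB+\tr dj\wedge di$. The remaining piece of the canonical two-form on $T^\ast R_{k+1}$ is
\[
\tr d\hat\jmath\wedge d\hat\imath=\sum_I d(\hat\jmath_1)_I\wedge d(\hat\imath_1)_I+\sum_I d(\hat\jmath_2)_I\wedge d(\hat\imath_2)_I,
\]
and by construction $\hat\imath_1=e_{k+1}$ and $\hat\jmath_2=e_{k+1}^T$ are constant, so each summand vanishes termwise. The pullback of the canonical form on $T^\ast R_{k+1}$ therefore agrees with the canonical form on $T^\ast R_k$, and the induced map on symplectic quotients is symplectic.

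The main potential obstacle is the injectivity step~(c): one must rule out the possibility that distinct $GL(k,\cx)$-orbits in $T^\ast R_k$ get identified under the larger group $GL(k+1,\cx)$ acting on $T^\ast R_{k+1}$. The stabiliser computation for the normalisations $\hat\imath_1=e_{k+1}$ and $\hat\jmath_2=e_{k+1}^T$ is precisely what prevents this. Everything else is routine matrix manipulation together with the trace identity already established in \S\ref{alt}.
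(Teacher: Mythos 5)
Your proof is correct and follows essentially the same route as the paper: injectivity comes from the observation that the stabiliser in $GL(k+1,\cx)$ of the normalised vectors $\hat\imath_1=e_{k+1}$ and $\hat\jmath_2=e_{k+1}^T$ is exactly $G$, and the symplectic property comes from $\tr d\hat\jmath\wedge d\hat\imath=0$ because those same entries are constant, combined with the trace identity of Proposition \ref{iso1}. You simply spell out the equivariance and level-set checks that the paper leaves implicit.
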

\begin{proof} The map is an embedding, since the subgroup of $GL(k+1,\cx)$ preserving $\hat \imath_1=(0,\dots,0,1)^T$ and $\hat \jmath_2=(0,\dots,0,1)$ is $G$.
The fact that the map respects the symplectic forms follows immediately from the following calculation:
$$ \tr d\wh{A}\wedge d\wh{B} +\tr d\hat{\jmath}\wedge d\hat{\imath}=\tr d\wh{A}\wedge d\wh{B}=\tr dA\wedge dB+\tr dj\wedge di.$$\end{proof}

The image of this embedding is easily seen to consist of $GL(k+1,\cx)$-equivalence classes of $(C,D,x,y)$ such that
$$ C_{k+1,k+1}= D_{k+1,k+1}=0,\quad  yx=\begin{pmatrix} 0 & \ast\\ 1 & -(k+1)\tau\end{pmatrix}.$$

Even more remarkable is the fact that, for $\tau=0$, this map respects various stability conditions. We recall that a quadruple $(A,B,i,j)\in T^\ast R_k$ is called
\begin{itemize}
\item  {\em stable}, if there is no proper subspace $V$ of $\cx^k$ with $A(V)\subset V$, $ B(V)\subset V$ and $\im i\subset V$,
\item {\em co-stable}, if there is no proper subspace $V$ of $\cx^k$ with $A(V)\subset V$, $ B(V)\subset V$ and $V \subset \Ker j$,
\item {\em regular}, if it is both stable and co-stable.
\end{itemize}

A result of Nakajima \cite{Nak0, Nak} states that 
$$\tilde N_{k,0}=\left\{(A,B,i,j)\in T^\ast R_k;\enskip [A,B]=ij, \enskip\text{$ (A,B,i,j)$ is stable}\right\}/GL(k,\cx),$$
is a resolution of $N_{k,0}$ and it can be identified with the framed moduli space of torsion free sheaves on $\oP^2$ with rank $2$, $c_1=0$, and $c_2=k$.
\par
Similarly, the smooth locus $N_{k,0}^{s}$ of $N_{k,0}$ can be identified with the set of $GL(k,\cx)$-equivalence classes of regular  $(A,B,i,j)$ satisfying $[A,B]=ij$. The argument follows  the proof of Lemma 3.25 in \cite{Nak}. In turn $N_{k,0}^{s}$ can be identified with the moduli space of $SU(2)$-instantons on $\oR^4$ or with the framed moduli space of rank $2$ vector bundles on $\oP^2$ with $c_1=0$ and $c_2=k$.
\par
We have
\begin{proposition} The map \eqref{embedd} preserves the conditions of stability, co-stability and regularity.\end{proposition}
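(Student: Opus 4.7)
My plan is to argue each of the three implications by contrapositive. Given a subspace $W\subset\cx^{k+1}$ that obstructs stability (resp.\ co-stability) of the image $(\widehat A,\widehat B,\hat\imath,\hat\jmath)$, I will manufacture a subspace $V\subset\cx^k$ that obstructs the same condition for the source $(A,B,i,j)$. The key geometric input is that $\hat\imath_1=e_{k+1}$ and $\hat\jmath_2=(0,\dots,0,1)$, combined with the block shape \eqref{hats} of $\widehat A$ and $\widehat B$, forces $W$ to respect the decomposition $\cx^{k+1}=\cx^k\oplus\cx e_{k+1}$ in a very rigid way.

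For stability, the inclusion $\hat\imath_1=e_{k+1}\in W$ makes $W$ split as $W=V\oplus\cx e_{k+1}$ with $V:=W\cap\cx^k$, and $V$ is proper in $\cx^k$ whenever $W$ is proper in $\cx^{k+1}$. The identities $\widehat A(v,0)=(Av,j_2 v)$ and $\widehat A e_{k+1}=(i_1,0)$ then show $Av\in V$ and $i_1\in V$, and the analogous computation with $\widehat B$ gives $Bv\in V$ and $i_2\in V$. Thus $V$ is a proper $A,B$-invariant subspace containing $\im i$, contradicting stability of the source.

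For co-stability, $\hat\jmath_2 w=0$ for $w\in W$ forces $W\subset\cx^k$; writing $w=(v,0)$, the conditions $\widehat A w,\widehat B w\in W\subset\cx^k$ force $j_1 v=j_2 v=0$ and $Av,Bv\in V:=W$. Hence $V$ is an $A,B$-invariant subspace of $\cx^k$ lying in $\Ker j$. If $V\subsetneq\cx^k$, this directly contradicts co-stability of the source. The remaining case $V=\cx^k$ forces $j=0$, and hence $[A,B]=\tau\cdot 1$; taking the trace kills the subcase $\tau\neq 0$ (since $0\neq k\tau$), while for $\tau=0$ the commuting pair $A,B$ admits a common eigenvector whose span is a proper nonzero $A,B$-invariant subspace of $\cx^k=\Ker j$, again contradicting source co-stability.

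Regularity is preserved at once, being the conjunction of stability and co-stability. The main obstacle I anticipate is the $V=\cx^k$ branch of the co-stability argument, where one must separately rule out $j=0$; the rest is direct bookkeeping with the block forms \eqref{hats} and the explicit shape of $\hat\imath,\hat\jmath$ from \eqref{embedd}.
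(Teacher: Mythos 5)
Your argument is correct and essentially the paper's own proof: the paper handles stability by exactly your splitting $W=V\oplus\cx e_{k+1}$ forced by $\hat\imath_1=e_{k+1}\in W$, pulling back invariance and $\im i$ to the $k$-dimensional block, and dismisses co-stability with ``a similar argument'', which is precisely your use of $\hat\jmath_2=(0,\dots,0,1)$ to force $W\subset\cx^k$ and hence $j_1v=j_2v=0$. Your extra branch $V=\cx^k$ (i.e.\ $j=0$, $\tau=0$) is harmless but not needed once co-stability is read, as intended (and as in Nakajima), as the absence of a \emph{nonzero} invariant subspace contained in $\Ker j$, since then $V=\cx^k$ itself already violates co-stability of the source.
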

\begin{proof}
Suppose that $V\subset \cx^{k+1}$ is a linear subspace such that $\wh{A}(V)\subset V$, $\wh{B}(V)\subset V$ and $\im (\hat{\imath})\subset V$. This last condition implies that $V_{k+1}=\cx(0,\dots,0,1)^T\subset V$ and, hence, if $(v_1,\dots,v_{k+1})^T\in V$, then $(v_1,\dots,v_{k},0)^T\in V$. Therefore $V=W\oplus V_{k+1}$, where $W$ is the projection of $V$ onto the subspace  $\{(v_1,\dots,v_{k},0)\in \cx^{k+1}\}$. The assumptions $\wh{A}(V)\subset V$ and $\wh{B}(V)\subset V$, applied to $V_{k+1}$, show that $\im (i)\subset W$. Using this and again $\wh{A}(V)\subset V$ and $\wh{B}(V)\subset V$, we conclude that $A(W)\subset W$ and $B(W)\subset W$. Thus stability is preserved. A similar argument shows that co-stability is preserved and, hence, so is the regularity.
\end{proof}

\begin{corollary} The map \eqref{embedd} induces symplectic embeddings $N_{k,0}^{s}\hookrightarrow N_{k+1,0}^{s}$ and $\tilde N_{k,0}\hookrightarrow\tilde N_{k+1,0}$.
\end{corollary}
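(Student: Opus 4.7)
The corollary is essentially an assemblage of the two preceding propositions, applied to the appropriate Zariski-open subsets of $N_{k,0}$ and $N_{k+1,0}$. The plan is to check, in order, three things: that \eqref{embedd} descends to the quotients cutting out $\tilde N_{k,0}$ and $N_{k,0}^s$; that the induced morphisms are injective; and that they are compatible with the symplectic structures.

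For the descent, recall from the excerpt that $\tilde N_{k,0}$ is the quotient of the stable locus in $\mu^{-1}(0)\subset T^\ast R_k$ by $GL(k,\cx)$, while $N_{k,0}^s$ is the quotient of the regular locus. The immediately preceding proposition shows that \eqref{embedd} sends stable (resp.\ regular) quadruples in $T^\ast R_k$ to stable (resp.\ regular) quadruples in $T^\ast R_{k+1}$. Combined with the $GL(k,\cx)$-equivariance of \eqref{embedd} and the fact that these loci support free actions with geometric quotients, this produces the two desired morphisms.

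Injectivity of both maps uses the argument already present in the proof of Proposition \ref{emb1}: the subgroup of $GL(k+1,\cx)$ fixing the constant vectors $\hat \imath_1=(0,\dots,0,1)^T$ and $\hat \jmath_2=(0,\dots,0,1)$ is precisely $G\simeq GL(k,\cx)$, so distinct $GL(k,\cx)$-orbits in $T^\ast R_k$ land in distinct $GL(k+1,\cx)$-orbits in $T^\ast R_{k+1}$. Compatibility with the symplectic forms is inherited verbatim from the identity
\[ \tr d\wh{A}\wedge d\wh{B}+\tr d\hat{\jmath}\wedge d\hat{\imath}=\tr dA\wedge dB+\tr dj\wedge di \]
established in the proof of Proposition \ref{emb1}, because the symplectic forms on $\tilde N_{k,0}$ and $N_{k,0}^s$ (and their analogues at level $k+1$) are the ones induced by symplectic reduction from the ambient flat form.

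No serious obstacle appears: the substantive content is already packaged in the two preceding propositions, and the only task here is bookkeeping on the open conditions defining $\tilde N_{k,0}$ and $N_{k,0}^s$, which are preserved by \eqref{embedd} by construction.
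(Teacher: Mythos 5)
Your argument is correct and matches the paper's intent exactly: the corollary is stated there without proof, as an immediate consequence of Proposition \ref{emb1} (embedding, symplectic form identity, stabiliser of $\hat\imath_1,\hat\jmath_2$ equal to $G$) and the proposition that \eqref{embedd} preserves stability and regularity, which is precisely the bookkeeping you carry out. Nothing further is needed.
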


We do not understand how this map looks like in terms of sheaves, i.e. how do we get a framed torsion-free sheaf with $c_2=k+1$ from one with $c_2=k$.


\section{The action of $G$ on $\gl(k+1,\cx)$}

In this section, we consider in detail ``half" of the set-up described in \S\ref{alt}, i.e. the adjoint action of $G\simeq GL(k,\cx)$ on $\gl(k+1,\cx)$. We shall write $\wh\g=\gl(k+1,\cx)$ and decompose elements of $\wh\g$ as
\begin{equation}\wh A=\begin{pmatrix} A & x \\ y & \Lambda\end{pmatrix},\quad A\in \g,\enskip x\in M_{k,1},\enskip y\in M_{1,k},\enskip \Lambda\in \cx.\label{hatA}\end{equation}

\subsection{A slice to the $G$-action} 
We recall that an element $\wh{A}$ of $\wh{\fG}$ is called {\em $G$-semisimple} if its
$G$-orbit is closed, and is called {\em $G$-regular}, if its $G$-orbit has maximal
dimension. We denote by $\wh{\fG}^{\rm r}$ the subset of all $G$-regular points of $\wh\fG$.

Let $\wh{A}$ be a $G$-regular element of the form \eqref{hatA}. Then $A$ is regular in $\gl(k,\cx)$, and it can be conjugated to a matrix of the form:
\begin{equation}\left(
\begin{array}{ccccc}
0&\cdots & 0&0& r_1 \\
1&\cdots & 0&0&r_2 \\
\vdots&\ddots &\vdots&\vdots&\vdots\\
0&\cdots& 1& 0 & r_{k-1} \\
0&\cdots& 0& 1& r_k \\ \end{array} \right).\label{Sl1}\end{equation}
We would also like to put the covector $y$ in a standard form, say $(0,\dots,0,1)$. We have:
\begin{lemma}  Let $A$ be a matrix of the form \ref{Sl1} and let $y=(y_1,\dots,y_k)$ be a covector. There exists an invertible matrix $X$ such that  $XAX^{-1}=A$ and $yX^{-1}=(0,\dots,0,1)$ if and only if $yv\neq 0$ for any eigenvector $v$ of $A$. If such an $X$ exists, then it is unique.\label{X}\end{lemma}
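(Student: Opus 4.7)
The plan is to reduce the problem to a question about the commutative algebra $Z(A)\subset \gl(k,\cx)$ of matrices commuting with $A$. Since $A$ is in the companion form \eqref{Sl1}, it is regular, so $Z(A)=\cx[A]$ has dimension $k$. Thus $XAX^{-1}=A$ forces $X=p(A)$ for some polynomial of degree $<k$; writing $e_k^{\ast}=(0,\dots,0,1)$, the constraint $yX^{-1}=e_k^{\ast}$ becomes $y=e_k^{\ast}X=e_k^{\ast}p(A)$. The task is therefore to show that a unique such polynomial $p$ exists making $p(A)$ invertible, and that this happens precisely when $y$ is nonzero on every eigenvector of $A$.

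The first step is to show that the linear map
\[ \Phi\colon \cx[t]_{<k}\longrightarrow \cx^k,\qquad p\longmapsto e_k^{\ast}p(A),\]
is an isomorphism, i.e., that the covectors $e_k^{\ast},e_k^{\ast}A,\ldots,e_k^{\ast}A^{k-1}$ form a basis of row vectors. A short direct computation with the explicit companion form of $A$ shows that $e_k^{\ast}A^j$ has the shape $(0,\dots,0,1,\ast,\dots,\ast)$ with the leading $1$ in position $k-j$; this triangular pattern gives linear independence, hence a basis. Consequently $e_k^{\ast}p(A)=y$ has a unique solution $p$ of degree $<k$, which proves the uniqueness clause of the lemma and produces the only candidate for $X$.

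Next I would determine when this candidate $X=p(A)$ is invertible. Since $p(A)$ is invertible iff $p$ is coprime to the characteristic polynomial of $A$, this happens iff $p(\lambda)\neq 0$ for every eigenvalue $\lambda$ of $A$. To rephrase this in terms of $y$, note that for any eigenvector $v$ with $Av=\lambda v$,
\[ yv \;=\; e_k^{\ast}p(A)v \;=\; p(\lambda)\,(e_k^{\ast}v).\]
The cyclic-covector property proved in Step~1 forces $e_k^{\ast}v\neq 0$: otherwise $e_k^{\ast}A^j v=\lambda^j e_k^{\ast}v=0$ for every $j$, and since $\{e_k^{\ast}A^j\}$ is a basis this would force $v=0$. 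Hence $yv\neq 0$ iff $p(\lambda)\neq 0$. Since $A$ is regular, each eigenspace is one-dimensional, so as $v$ ranges over all eigenvectors the value $\lambda$ ranges over the whole spectrum, giving the desired equivalence.

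The main — indeed essentially the only — obstacle is the first step, namely verifying that $\{e_k^{\ast},\ldots,e_k^{\ast}A^{k-1}\}$ is a basis. This elementary companion-form calculation is the crux, as it simultaneously supplies the unique polynomial $p$ realising $y$ and the nonvanishing of $e_k^{\ast}$ on eigenvectors. The remaining steps are formal consequences of the identification $Z(A)=\cx[A]$ for regular $A$.
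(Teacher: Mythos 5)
Your proof is correct and follows essentially the same route as the paper's: both use that the regular (companion-form) matrix $A$ forces $X=p(A)$, that $(0,\dots,0,1)$ is a cyclic covector so $p$ is uniquely determined by $y$, and that invertibility of $p(A)$ is equivalent to $p(\lambda)\neq 0$ at every eigenvalue, which via $yv=p(\lambda)\,(e_k^{\ast}v)$ with $e_k^{\ast}v\neq 0$ translates into $yv\neq 0$ for every eigenvector. The only cosmetic differences are that you argue invertibility by coprimality with the characteristic polynomial rather than Jordan form, and deduce $e_k^{\ast}v\neq 0$ from the cyclic-covector basis instead of reading it off the companion form directly.
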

\begin{proof} Since $(0,\dots,0,1)$ is a cyclic covector for $A$, there exists a unique $X=\sum_{i=0}^{k-1} c_iA^i$ such that $y=(0,\dots,0,1)X$. We know that $[X,A]=0$ and the problem is the invertibility of $X$.  If we put $A$ in the Jordan form, then it is clear that $\det X\neq 0$ if and only if  $\sum_{i=0}^{k-1} c_i\lambda^i\neq 0$ for any eigenvalue $\lambda$ of $A$. Let $v=(v_1,\dots,v_k)^T$ be an eigenvector for $A$ with the eigenvalue $\lambda$. We observe that $Av=\lambda v$ and $v\neq 0$ implies that $v_k\neq 0$. Since $yv=(0,\dots,0,1)Xv=v_k\sum_{i=0}^{k-1} c_i\lambda^i$, we conclude that $\det X\neq 0$ precisely when $yv\neq 0$ for any eigenvector $v$.
\end{proof}

We observe that the condition $yv\neq 0$ for any eigenvector $v$ of $A$ is equivalent to $A$ and $\wh{A}$ not having a common eigenvector with a common eigenvalue. We therefore define the following set:
\begin{equation}\wh{\fG}^{0}=\left\{\wh{A}\in \wh{\fG}^{\text{r}};\forall_v \enskip Av=\lambda v\implies \wh{A}\begin{pmatrix}v\\0\end{pmatrix}\neq \lambda \begin{pmatrix}v\\0\end{pmatrix}\right\}.\label{S0}\end{equation}

We conclude, from the last lemma, that any element of $\wh{\fG}^{0}$ is $G$-conjugate to a matrix of the form:
\begin{equation}\left(
\begin{array}{cccccc|c}
0&0&\cdots & 0&0& r_1 &s_1\\
1&0&\cdots & 0&0&r_2 & s_2\\
\vdots&\ddots&\ddots &\vdots&\vdots&\vdots&\vdots\\
\vdots&\vdots&\ddots &\ddots&\vdots&\vdots&\vdots\\
0&0&\cdots& 1& 0 & r_{k-1} & s_{k-1}\\
0& 0&\cdots& 0& 1& r_k & s_k\\
\hline 0&0&\cdots&0& 0& 1&  s_{k+1}
\end{array} \right).\label{Slodowy}\end{equation}
Let $\sS$ be
the set of matrices of this form. We rephrase Lemma \ref{X} as follows:
\begin{theorem} The set $\sS$ meets any $G$-orbit in $\wh{\fG}^0$ in exactly one
point.\hfill $\Box$\label{slice}\end{theorem}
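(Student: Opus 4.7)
The theorem has two parts to verify: \emph{existence}, that every $G$-orbit in $\wh{\fG}^0$ meets $\sS$, and \emph{uniqueness}, that the intersection is a single point. The plan is to reduce both to the Frobenius rational canonical form together with Lemma \ref{X}; the substantive content has already been installed in that lemma, and the present argument is essentially organisational.

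\textbf{Existence.} Given $\wh{A} \in \wh{\fG}^0 \subset \wh{\fG}^{\rm r}$, first note that $A$ is regular in $\gl(k,\cx)$; this is a routine stabiliser-dimension count comparing the cases $A$ regular versus $A$ non-regular against the generic orbit dimension $k^2$. By the theory of rational canonical forms there is $L_0 \in GL(k,\cx)$ such that $L_0 A L_0^{-1}$ is in the companion form \eqref{Sl1}, and conjugating $\wh A$ by $\diag(L_0,1) \in G$ allows me to assume $A$ itself has this shape. The defining condition of $\wh{\fG}^0$ (that $yv\neq 0$ for every eigenvector $v$ of $A$) is $G$-invariant, since $A$'s eigenvectors and the pairing with $y$ transform compatibly, so it is preserved. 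Now Lemma \ref{X} applies to the transformed covector and produces a unique $X \in GL(k,\cx)$ centralising $A$ with $yX^{-1} = (0,\dots,0,1)$. Conjugating by $\diag(X,1)$ preserves the companion form of $A$, puts the covector in standard form, and recasts the remaining data $x$ and $\Lambda$ as the column $(s_1,\dots,s_k)^T$ and scalar $s_{k+1}$ appearing in \eqref{Slodowy}. The resulting matrix lies in $\sS$.

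\textbf{Uniqueness.} Suppose $\wh A, \wh A' \in \sS$ are related by $\wh A' = g \wh A g^{-1}$ with $g = \diag(L,1)$. Comparing the upper-left $k \times k$ blocks gives $LAL^{-1} = A'$; but $A$ and $A'$ are both in the companion form \eqref{Sl1}, and companion matrices are canonical representatives of their $GL(k,\cx)$-conjugacy classes, being uniquely determined by their characteristic polynomials. Hence $A = A'$ and $L$ centralises $A$. Comparing the lower-left blocks then yields $(0,\dots,0,1)L^{-1} = (0,\dots,0,1)$, and the uniqueness clause of Lemma \ref{X}, applied with $y = (0,\dots,0,1)$, forces $L = I$. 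Therefore $\wh A = \wh A'$.

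\textbf{Main obstacle.} There is no deep obstacle to contend with: Lemma \ref{X} has already carried the weight. The only points requiring vigilance are (i) the $G$-invariance of the defining condition of $\wh{\fG}^0$, which guarantees that the reduction to companion form keeps one inside $\wh{\fG}^0$, and (ii) the standard but essential fact that the Frobenius companion form is a \emph{unique} representative within its $GL(k,\cx)$-conjugacy class, not merely an existing one. The incidental claim that $G$-regularity of $\wh A$ forces $A$ to be regular in $\gl(k,\cx)$ is handled by a short stabiliser-dimension count and is the only place where some care with dimensions is needed.
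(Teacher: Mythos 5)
Your proposal is correct and follows essentially the same route as the paper, which presents Theorem~\ref{slice} precisely as a rephrasing of Lemma~\ref{X} after conjugating $A$ into the companion form \eqref{Sl1}: existence comes from the unique centralising $X$ with $yX^{-1}=(0,\dots,0,1)$, and uniqueness from the rigidity of the companion form together with the uniqueness clause of that lemma. The one caveat is your parenthetical claim that $G$-regularity of $\wh{A}$ forces $A$ to be regular by a ``routine stabiliser-dimension count'': the paper also asserts this without proof, but a naive count does not suffice (for non-regular $A$ the centraliser has dimension $\geq k+2$ while the conditions $\ell x=0$, $y\ell=0$ could naively impose up to $2k$ conditions), and one really needs a small module-theoretic argument producing a nonzero $\ell\in Z(A)$ killed by $x$ and $y$ when $A$ has a repeated Jordan block.
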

We have an immediate corollary:
\begin{corollary} Any element of $\wh{\fG}^0$ is regular (as a
matrix).\hfill $\Box$\label{regular}\end{corollary}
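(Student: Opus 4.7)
The plan is to reduce the statement to a direct inspection of the slice $\sS$. By Theorem~\ref{slice}, every element of $\wh{\fG}^0$ is $G$-conjugate to a (unique) matrix of the form \eqref{Slodowy}. Since $G\subset GL(k+1,\cx)$, conjugation by $G$ preserves all matrix-theoretic invariants — in particular, regularity. Thus it suffices to check that every matrix $M\in\sS$ is regular as an element of $\gl(k+1,\cx)$.

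Next, I would use the standard characterisation: a matrix $M\in\gl(n,\cx)$ is regular if and only if it admits a cyclic vector. I would propose the first standard basis vector $e_1$ as a cyclic vector for any $M$ of the form \eqref{Slodowy}. A direct reading of the columns of \eqref{Slodowy} gives $Me_i=e_{i+1}$ for $1\le i\le k-1$ and $Me_k=r_1e_1+\dots+r_ke_k+e_{k+1}$. By induction this yields
\[
M^{i}e_1=e_{i+1}\quad\text{for }0\le i\le k-1,\qquad M^{k}e_1=\sum_{j=1}^{k}r_j e_j+e_{k+1}.
\]
Hence the iterates $e_1,Me_1,\dots,M^{k}e_1$ span $\cx^{k+1}$, so $e_1$ is cyclic and $M$ is regular.

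Combining the two steps finishes the proof. The argument is purely computational and there is no genuine obstacle: the non-trivial input — that the matrix $A$ can be brought to companion form and $y$ simultaneously normalised to $(0,\dots,0,1)$ — has already been packaged into Theorem~\ref{slice} via Lemma~\ref{X}, and the remaining step is just the observation that the normal form \eqref{Slodowy} is visibly cyclic through $e_1$.
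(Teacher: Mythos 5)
Your proof is correct and follows the same route the paper intends: reduce via Theorem~\ref{slice} to the normal form \eqref{Slodowy} and observe that it is cyclic (the paper treats this as immediate, since \eqref{Slodowy} is the matrix of multiplication by $z$ on $\cx[z]/(\wh{q})$, hence visibly regular). Your explicit verification that $e_1$ is a cyclic vector is just a hands-on version of that same observation.
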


\subsection{$G$-invariants}
Since $\wh{\fG}^0$  is Zariski-open, any polynomial invariant of the $G$-action on $\wh{\fG}$ is an algebraic function on the slice $\sS$, i.e. a polynomial in $r_i,s_j$. The functions $r_i$ and $s_j$ are, in turn, given by the characteristic polynomials of $A$ and $\wh{A}$:
\begin{proposition} Let $\wh{A}=A+M$ be an element of $\wh{\fG}^0$ of the form
\eqref{Slodowy}, and let $q(z),\wh{q}(z)$ be the characteristic polynomials of $A$ and $\wh{A}$. Then:
\begin{equation} z^k-\sum_{i=1}^k r_iz^{i-1}=q(z),\label{r} \end{equation}
\begin{equation} \sum_{i=1}^{k} s_i
z^{i-1}=(z-s_{k+1})q(z)-\wh{q}(z).\label{s}\end{equation}
\label{coeff}\end{proposition}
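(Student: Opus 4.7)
The first identity is immediate from the companion-matrix form of $A$: expanding $\det(zI_k-A)$ along the first column (or inductively in $k$) yields $q(z)=z^k-\sum_{i=1}^k r_i z^{i-1}$.

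For the second identity, my plan is to view $\wh A$ as a block matrix and apply the Schur complement determinant formula to
\begin{equation*}
zI_{k+1}-\wh A \;=\; \begin{pmatrix} zI_k-A & -s\\ -e_k^T & z-s_{k+1}\end{pmatrix}, \qquad s=(s_1,\dots,s_k)^T,\ \ e_k=(0,\dots,0,1)^T.
\end{equation*}
Using $\det(zI_k-A)\cdot(zI_k-A)^{-1}=\operatorname{adj}(zI_k-A)$, this gives
\begin{equation*}
\wh q(z)\;=\;q(z)\bigl(z-s_{k+1}\bigr)\;-\;e_k^T\operatorname{adj}(zI_k-A)\,s,
\end{equation*}
so the calculation reduces to identifying the row vector $e_k^T\operatorname{adj}(zI_k-A)$.

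The main (essentially the only non-trivial) step is to show that
\begin{equation*}
e_k^T\operatorname{adj}(zI_k-A)\;=\;\bigl(1,\,z,\,z^2,\,\dots,\,z^{k-1}\bigr).
\end{equation*}
By the defining property of the adjugate this amounts to verifying the identity $(1,z,\dots,z^{k-1})(zI_k-A)=q(z)\,e_k^T$, which I would check column by column using the companion structure: for $j<k$ only the diagonal entry $z$ in row $j$ and the subdiagonal entry $-1$ in row $j+1$ of column $j$ contribute, giving $z\cdot z^{j-1}-z^j=0$, while the $k$-th column produces $q(z)$ by the first identity. Substituting $e_k^T\operatorname{adj}(zI_k-A)\,s=\sum_{j=1}^k s_j z^{j-1}$ back into the Schur complement formula and rearranging yields the claim. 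The computation works out because $e_k^T$ is a cyclic covector for the companion matrix $A$, so no step presents a real obstacle.
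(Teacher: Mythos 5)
Your proof is correct, but it takes a genuinely different route from the paper. You compute the characteristic polynomial of \eqref{Slodowy} directly: a bordered-determinant (Schur complement) expansion gives $\wh q(z)=(z-s_{k+1})q(z)-e_k^T\operatorname{adj}(zI_k-A)\,s$, and the key step is the identity $e_k^T\operatorname{adj}(zI_k-A)=(1,z,\dots,z^{k-1})$, which you verify from $(1,z,\dots,z^{k-1})(zI_k-A)=q(z)e_k^T$ (and then cancel $q(z)$, legitimately, since the entries lie in the integral domain $\cx[z]$). The paper instead disposes of both identities with one structural observation: the matrix \eqref{Slodowy} is the matrix of multiplication by $z$ on $\cx[z]/(\wh q)$ in the basis $1,z,\dots,z^{k-1},q(z)$, so the last two columns literally encode $z\cdot z^{k-1}\equiv \sum r_iz^{i-1}+q$ and $z\cdot q\equiv \sum s_iz^{i-1}+s_{k+1}q \pmod{\wh q}$, which is \eqref{r} and \eqref{s}. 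Your argument is more elementary and self-contained, needing only standard facts about companion matrices and adjugates; the paper's one-line identification buys more downstream, since the same point of view (multiplication by $z$ on $\cx[z]/(\wh q)$ in bases adapted to $q$) is reused for strongly semisimple elements, e.g.\ in the bases \eqref{base1}, \eqref{base2}, the formula \eqref{x_i}, and the change-of-basis matrices \eqref{g_{ij}}, \eqref{g^{ij}}. Both proofs are complete and exploit the same underlying fact, namely that $e_k^T$ is a cyclic covector for $A$.
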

\begin{proof} The matrix \eqref{Slodowy} represents the multiplication by $z$ on
$\cx[z]/(\wh{q})$ in the basis $1,z,\dots ,z^{k-1},q(z)$.\end{proof}

 The existence of the slice $\sS$ and Proposition \ref{coeff} imply the following description of the ring of $G$-invariant functions on $\wh{\fG}$:
\begin{corollary} $\cx[\wh{\fG}]^G=\cx[\fH_k]^{S_k}\otimes \cx[\fH_{k+1}]^{S_{k+1}}$, where $\fH_k$, ${\fH}_{k+1}$ denote  Cartan subalgebras of $\gl(k,\cx),\gl(k+1,\cx)$. \hfill $\Box$ \label{invariants}\end{corollary}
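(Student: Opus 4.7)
The plan is to combine Theorem~\ref{slice} with the Chevalley restriction theorem, via a restriction-to-$\sS$ argument. Two families of $G$-invariants on $\wh{\fG}$ are immediate: on the one hand, the upper-left $k\times k$ block $A$ of $\wh A$ transforms under $G\simeq GL(k,\cx)$ by conjugation, so pulling back gives an embedding $\cx[\gl(k,\cx)]^{GL(k,\cx)}\hookrightarrow \cx[\wh{\fG}]^G$; on the other hand, $\wh A$ itself transforms under $GL(k+1,\cx)\supset G$ by conjugation, so pulling back gives an embedding $\cx[\gl(k+1,\cx)]^{GL(k+1,\cx)}\hookrightarrow \cx[\wh{\fG}]^G$. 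Identifying the two source rings with $\cx[\fH_k]^{S_k}$ and $\cx[\fH_{k+1}]^{S_{k+1}}$ by the classical Chevalley theorem, multiplication produces a ring homomorphism
\[\Phi:\cx[\fH_k]^{S_k}\otimes\cx[\fH_{k+1}]^{S_{k+1}}\longrightarrow \cx[\wh{\fG}]^G,\]
and the task is to prove that $\Phi$ is an isomorphism.

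Next I would construct an injection $\iota:\cx[\wh{\fG}]^G\hookrightarrow \cx[\sS]$. Since $\wh{\fG}^{0}$ is a nonempty Zariski-open $G$-invariant subset of the irreducible variety $\wh{\fG}$, restriction gives an injection $\cx[\wh{\fG}]^G\hookrightarrow \cx[\wh{\fG}^{0}]^G$. By Theorem~\ref{slice} the slice $\sS$ meets every $G$-orbit in $\wh{\fG}^{0}$ in exactly one point, so further restriction to $\sS$ produces the desired injection $\iota$ into $\cx[\sS]=\cx[r_1,\dots,r_k,s_1,\dots,s_{k+1}]$.

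It now suffices to prove that $\iota\circ\Phi$ is an isomorphism onto $\cx[\sS]$: this simultaneously forces $\Phi$ to be injective and forces $\mathrm{im}\,\iota=\cx[\sS]$, whence $\Phi$ is surjective as well. By Proposition~\ref{coeff}, under $\iota\circ\Phi$ the coefficients of the characteristic polynomial $q(z)$ of $A$ map, via \eqref{r}, to $r_1,\dots,r_k$, while the coefficients of $\wh q(z)$ map, via \eqref{s}, to polynomial expressions in $r_i,s_j,s_{k+1}$. The resulting change of coordinates on $\sS$ is triangular and invertible: forcing the $z^k$-coefficient of the right-hand side of \eqref{s} to vanish (the left-hand side has degree $k-1$) recovers $s_{k+1}=\tr\wh A-\tr A$, and then matching coefficients of $z^{k-1},\dots,z^0$ in \eqref{s} solves recursively for $s_k,s_{k-1},\dots,s_1$ as polynomials in the $r_i$'s, $s_{k+1}$, and the coefficients of $\wh q$. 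Hence $\iota\circ\Phi$ identifies two polynomial rings in $2k+1$ variables.

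The only substantive step is the triangularity of this change of coordinates on $\sS$, but this is a direct bookkeeping exercise with formula \eqref{s}; everything else in the argument is formal.
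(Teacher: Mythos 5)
Your proof is correct and follows essentially the same route as the paper's (implicit) argument: restrict invariants to the slice $\sS$ of Theorem~\ref{slice}, and use Proposition~\ref{coeff} together with the Chevalley restriction theorem to identify the slice coordinates $r_i,s_j$ with the coefficients of the characteristic polynomials of $A$ and $\wh{A}$ via an invertible triangular change of variables. You have merely made explicit the bookkeeping (injectivity of restriction to $\sS$, recovery of $s_{k+1}=\tr\wh{A}-\tr A$ and then of $s_k,\dots,s_1$) that the paper leaves to the reader.
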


\subsection{Strongly semisimple points}
\begin{definition} A matrix $\wh{A}$ of the form \eqref{hatA} is said to be {\em strongly semisimple}, if $\wh{A}\in\wh{\fG}^0$ and both $A,\wh{A}$ are regular semisimple matrices. The subset of strongly semisimple points of $\wh{\fG}$ will be denoted by $\wh{\fG}^1$.\label{ss}\end{definition}
If $A$ is regular semisimple, then $\wh{A}$ can be $G$-conjugated  to a matrix of the form:
\begin{equation} \left(
\begin{array}{ccc|c}
\lambda_1&\cdots&0&x_1\\
\vdots&\ddots&\vdots&\vdots\\
0&\cdots&\lambda_k&x_k\\
\hline y_1&\cdots&y_k& \Lambda
\end{array} \right),\label{canonical0}
\end{equation}
where the $\lambda_i$ are distinct.
We observe that an $\wh{A}$ of this form is $G$-regular and $G$-semisimple precisely when
$x_iy_i\neq 0$ for all $i$. Similarly, $\wh{A}$ is $G$-regular (resp. in $\wh{\fG}^0$) if
and only if, for any $i$,  either $y_i$ or $x_i$ is nonzero (resp. $y_i\neq 0$). In particular, any element of $\wh{\fG}^0$ with semisimple $A$ can be $G$-conjugated to
\begin{equation} \left(
\begin{array}{ccc|c}
\lambda_1&\cdots&0&x_1\\
\vdots&\ddots&\vdots&\vdots\\
0&\cdots&\lambda_k&x_k\\
\hline 1&\cdots & 1& \Lambda
\end{array} \right),\label{canonical}
\end{equation}

Suppose now that $\wh{A}$ of this form is strongly semisimple, with (distinct) eigenvalues $\wh{\lambda}_1,\dots,\wh{\lambda}_{k+1}$.  Let $q(z)=\prod (z-\lambda_i)$ and $\wh{q}(z)=\prod (z-\wh{\lambda}_j)$ be the characteristic polynomials of $A$ and $\wh{A}$.
We consider the multiplication by $z$ on $\cx[z]/(\wh{q})$. It is a linear
operator which in the basis
\begin{equation} \prod_{j\neq i}(z-\wh{\lambda}_j), \quad i=1,\dots,k+1
\label{base1}\end{equation}
is the diagonal matrix $\diag(\wh{\lambda}_1,\dots,\wh{\lambda}_{k+1})$. On the
other hand, in the basis
\begin{equation} \prod_{j\neq i}(z-\lambda_j),q(z), \quad i=1,\dots,k,
\label{base2}\end{equation}
multiplication by $z$ is given by the matrix of the form \eqref{canonical}. The numbers
$x_i$ satisfy
$$ (z-\Lambda)q(z)=\sum_i\left(x_i \prod_{j\neq i}(z-\lambda_j)\right)\quad \mod\wh{q},$$
and hence
$$(z-\Lambda)q(z)-\wh{q}(z)=\sum_i\left(x_i \prod_{j\neq i}(z-\lambda_j)\right).$$
Substituting $\lambda_i$ for $z$ we obtain:
\begin{equation} x_i=-\frac{ \prod_{j=1}^{k+1} (\lambda_i-\wh{\lambda}_j)}{
\prod_{j\neq i} (\lambda_i-\lambda_j)}.\label{x_i}\end{equation}
From this formula and the remarks after \eqref{canonical0}, we obtain immediately:
\begin{corollary} Let $A$ be regular semisimple. Then $\wh{A}$ is $G$-regular and $G$-semisimple if and only if $A$ and $\wh{A}$ do not have a common eigenvalue. \hfill $\Box$.\label{rss}\end{corollary}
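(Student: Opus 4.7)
The plan is to reduce the claim to the sufficient criterion for $G$-regularity plus $G$-semisimplicity that is noted in the excerpt just after \eqref{canonical0}, namely the condition $x_iy_i\neq 0$ for all $i$. Then I would compute the characteristic polynomial of $\wh A$ explicitly in terms of the $x_i$, $y_i$, $\lambda_i$ and $\Lambda$, and read off the eigenvalue condition.

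Concretely, since $A$ is regular semisimple I can first conjugate by an element of $G$ so that $A=\diag(\lambda_1,\dots,\lambda_k)$ with distinct $\lambda_i$. In this frame $\wh A$ has the form \eqref{canonical0}, and (as recorded in the text) $\wh A$ is $G$-regular and $G$-semisimple precisely when $x_iy_i\neq 0$ for every $i$. So it suffices to show that
\begin{equation*}
\{\lambda_1,\dots,\lambda_k\}\cap\Spec(\wh A)=\emptyset \ \Longleftrightarrow\ x_iy_i\neq 0\text{ for all }i.
\end{equation*}

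For this I would expand the characteristic polynomial of $\wh A$ by the Schur complement relative to the $(k{+}1,k{+}1)$-block. With $A-zI$ invertible for generic $z$,
\begin{equation*}
\wh q(z)=\det(zI-\wh A)=\bigl((z-\Lambda)-y(zI-A)^{-1}x\bigr)\prod_{i=1}^k(z-\lambda_i)
=(z-\Lambda)\prod_i(z-\lambda_i)-\sum_i x_iy_i\prod_{j\neq i}(z-\lambda_j).
\end{equation*}
Evaluating at $z=\lambda_i$ the first term vanishes and only the $i$-th summand of the sum survives, giving
\begin{equation*}
\wh q(\lambda_i)=-x_iy_i\prod_{j\neq i}(\lambda_i-\lambda_j).
\end{equation*}
Since the $\lambda_j$ are distinct, the Vandermonde-type product is nonzero, so $\wh q(\lambda_i)=0$ iff $x_iy_i=0$. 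Thus some $\lambda_i$ is an eigenvalue of $\wh A$ iff some $x_iy_i$ vanishes, which is exactly the negation of the $G$-regular-semisimple condition.

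There is essentially no hard step: the Schur-complement identity is routine, and the only mild care needed is to note that once $\wh A$ is brought to the normal form \eqref{canonical0} the criterion ``$x_iy_i\neq 0$'' is available on the nose, so that we need \emph{not} further reduce to the form \eqref{canonical} (which would require $y_i\neq 0$ up front and is therefore not available in the ``only if'' direction before the conclusion is reached).
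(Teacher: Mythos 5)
Your argument is correct and follows essentially the same route as the paper: the paper likewise combines the observation after \eqref{canonical0} (that an $\wh{A}$ in that frame is $G$-regular and $G$-semisimple precisely when $x_iy_i\neq 0$ for all $i$) with an evaluation of the characteristic polynomial of $\wh{A}$ at the eigenvalues $\lambda_i$ of $A$, namely formula \eqref{x_i}. The only difference is minor but slightly advantageous: the paper's formula \eqref{x_i} is derived after normalising to the slice \eqref{canonical} (so $y_i=1$) and under the strong semisimplicity assumption, whereas your Schur-complement identity $\wh{q}(\lambda_i)=-x_iy_i\prod_{j\neq i}(\lambda_i-\lambda_j)$ is obtained directly in the frame \eqref{canonical0}, so the ``only if'' half of the equivalence requires no prior knowledge that the $y_i$ are nonzero or that $\wh{A}$ has distinct eigenvalues.
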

Let now $g$ be the matrix representing  the passage from the basis \eqref{base1}
to \eqref{base2}. Then $g$ diagonalises \eqref{canonical}, i.e. $g\wh{A}g^{-1}=\diag \bigl(\wh{\lambda}_{1},\dots, \wh{\lambda}_{k+1}\bigr)$. We compute easily the entries of $g$ and of $g^{-1}$:
\begin{equation} g_{ij}=\begin{cases} \frac{\prod_{m\neq
j}(\wh{\lambda}_i-\lambda_m)}{ \prod_{n\neq i}(\wh{\lambda}_i-\wh{\lambda}_n)} &
\text{if $j\leq k$}\\ & \\
\frac{\prod_{ m=1}^k (\wh{\lambda}_i-\lambda_m)}{ \prod_{n\neq
i}(\wh{\lambda}_i-\wh{\lambda}_n)} & \text{if $j=k+1$},\end{cases}
\label{g_{ij}}\end{equation}

\begin{equation} g^{ij}=[g^{-1}]_{ij}=\begin{cases}\frac{\prod_{m\neq j}(\lambda_i-\wh{\lambda}_m)}{
\prod_{n\neq i}(\lambda_i-\lambda_n)} & \text{if $i\leq k$}\\  & \\ 1 & \text{if $i=k+1$}.\end{cases}
\label{g^{ij}}\end{equation}

\section{On the Poisson structure of the instanton moduli space}

We go back to the space $N_{k,\tau}$, which we have identified, in Proposition \ref{iso1} with the symplectic quotient $\mu_G^{-1}(\tau\cdot 1)$ of $T^\ast {\fS}_{k+1}$ by $G$. Our aim is to embedd the Poisson algebra of functions on $N_{k,\tau}$ into the Poisson algebra of $S_k\times S_{k+1}$-invariant functions on an open subset of a flat space. It turns out that it is better to consider $N_{k,\tau}\times \cx^2$, with the standard symplectic structure on the second factor. In other word, we consider the symplectic quotient $\mu_G^{-1}(\tau\cdot 1)$ of the whole $T^\ast \gl({k+1},\cx)$ by $G$.
\par
For a $\tau\in \cx$, we write
$$\wh{\tau}=\diag\bigl(\tau,\dots,\tau,-k\tau\bigr).$$
We also write $\fM^-$ (resp. $\fM^|$) for the subspace of $\wh{\g}=\gl({k+1},\cx)$ generated by the $y$-s (resp. $x$-s) in \eqref{hatA}, and $\fM=\fM^-\oplus\fM^|$.
\par  
 The equation
$\mu_G(\wh{A},\wh{B})=\tau\cdot 1$ can be written as
\begin{equation} \bigl[\wh{A},\wh{B}]\in \wh{\tau}+\fM.\label{moment2}\end{equation}

\subsection{Decomposition of $\wh{B}$} 
Consider, for now, the case $\tau=0$. The equation \eqref{moment2} reduces then to
\begin{equation} \bigl[\wh{A},\wh{B}]\in \fM.\label{moment3}\end{equation}
We make the following observation 
\begin{lemma} Let $\wh{A}\in\wh{\fG}$ and let $B_1\in\g$, $B_2\in \wh{\fG}$,  be such that
$[A,B_1]=0$, $[\wh{A},B_2]=0$. Then the pair $(\wh{A},B_1+B_2)$ satisfies
\eqref{moment3}.\label{sum}\end{lemma}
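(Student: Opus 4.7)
The plan is to reduce the verification to a direct block-matrix computation, exploiting the two commutation hypotheses separately. First I would set $\wh B := B_1+B_2$, where $B_1$ is viewed as an element of $\wh{\fG}$ via the natural embedding $\g \hookrightarrow \wh{\fG}$ as the upper-left $k\times k$ block (padded by zeros in the last row and column). By bilinearity of the bracket,
\begin{equation*}
[\wh A,\wh B] \;=\; [\wh A, B_1] + [\wh A, B_2],
\end{equation*}
and the hypothesis $[\wh A, B_2]=0$ immediately kills the second summand. Thus the only thing to check is that $[\wh A, B_1]$ lies in $\fM = \fM^- \oplus \fM^|$.

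For this I would write $\wh A = \begin{pmatrix} A & x \\ y & \Lambda \end{pmatrix}$ and $B_1 = \begin{pmatrix} B_1 & 0 \\ 0 & 0 \end{pmatrix}$ and compute the two products block by block. A one-line calculation gives
\begin{equation*}
[\wh A, B_1] \;=\; \begin{pmatrix} [A,B_1] & -B_1 x \\ y B_1 & 0 \end{pmatrix}.
\end{equation*}
By hypothesis the upper-left block vanishes, and the lower-right entry is zero, so what remains is a sum of a matrix in $\fM^|$ (the column $-B_1 x$) and a matrix in $\fM^-$ (the row $y B_1$), i.e.\ an element of $\fM$. Combined with the previous step, this yields $[\wh A,\wh B]\in \fM$, as required.

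There is no real obstacle here: the statement is essentially a structural observation about the block decomposition of $\wh{\fG}$ relative to the subalgebra $\g$. The only point worth flagging is the (harmless) abuse of notation whereby $B_1\in\g$ is silently identified with its image in $\wh{\fG}$; once that identification is made explicit, the proof is a two-line block computation together with bilinearity of the commutator.
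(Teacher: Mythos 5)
Your proof is correct and is essentially the paper's own argument: the paper's one-line proof ("follows from $[\wh{A},B_1]\in\fM$") rests on exactly the block computation you spell out, namely that $[\wh{A},B_1]$ has upper-left block $[A,B_1]=0$, with the remaining nonzero entries $-B_1x$ and $yB_1$ lying in $\fM^|$ and $\fM^-$, after which bilinearity and $[\wh{A},B_2]=0$ finish the job.
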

\begin{proof} Follows from $[\wh{A},B_1]\in\fM$.\end{proof}

Thus, we would like to ask which $\wh{B}$ satisfying \eqref{moment3} can be decomposed into
$B_1$ and $B_2$ as in the lemma. We have:
\begin{proposition} Let $\wh{A}$ be $G$-regular with both $A$ and $\wh A$ regular semisimple. Then any $\wh{B}$ satisfying
$[\wh{A},\wh{B}]\in \fM$ can be written uniquely as $B_1+B_2$, $B_1\in \g$, with $[A,B_1]=0$,
 $[\wh{A},B_2]=0$.  \label{decompose}\end{proposition}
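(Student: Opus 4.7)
\emph{Proof plan.} My plan is to establish uniqueness first and then deduce existence from a dimension count. Since the problem is $G$-equivariant, I may conjugate $\wh A$ into the form \eqref{canonical0} with $A=D=\diag(\lambda_1,\dots,\lambda_k)$ of distinct eigenvalues and with $\wh A$ itself regular semisimple.

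For uniqueness, suppose $B_1+B_2=0$ and view $B_1\in\g$ as embedded in $\wh\g$ with zero last row and column. A direct block computation gives
\begin{equation*}
[\wh A,B_1]=\begin{pmatrix}[A,B_1] & -B_1 x\\ yB_1 & 0\end{pmatrix},
\end{equation*}
so the hypotheses $[A,B_1]=0$ and $[\wh A,B_2]=-[\wh A,B_1]=0$ together force $B_1 x=0$ and $yB_1=0$. In the $A$-eigenbasis $B_1$ is diagonal, and $G$-regularity of $\wh A$---which, as recorded after \eqref{canonical0}, means that at each index $i$ at least one of $x_i,y_i$ is nonzero---forces every diagonal entry of $B_1$ to vanish.

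For existence I would match dimensions. By uniqueness, the direct sum of centralizers $\{B_1\in\g:[A,B_1]=0\}\oplus\{B_2\in\wh\g:[\wh A,B_2]=0\}$ has dimension $k+(k+1)=2k+1$ and, by Lemma \ref{sum}, sits inside $\sV=\{\wh B:[\wh A,\wh B]\in\fM\}$. It therefore suffices to show $\dim\sV\le 2k+1$. Writing $\wh B=\begin{pmatrix}B & u\\ v^T & \mu\end{pmatrix}$, the condition $[\wh A,\wh B]\in\fM$ reduces to $[D,B]=uy^T-xv^T$ together with $y^T u=v^T x$, the latter being automatic as the trace of the former. The diagonal of the matrix equation yields the pointwise constraints $u_i y_i=x_i v_i$; the delicate point, and where I expect the main bookkeeping obstacle to lie, is that $G$-regularity (rather than the stronger $\wh A\in\wh\fG^0$) is exactly sharp enough to ensure that each such constraint cuts out a one-dimensional locus in $(u_i,v_i)$, whether both or only one of $x_i,y_i$ vanishes. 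The off-diagonal entries of $B$ are then uniquely determined, so the total free parameters are the $k$ diagonal entries of $B$, the $k$ one-dimensional families in $(u_i,v_i)$, and the scalar $\mu$, totalling $2k+1$, as required.
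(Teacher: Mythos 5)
Your proof is correct and follows essentially the same route as the paper: uniqueness is the same computation (a diagonal $B_1$ with $B_1x=0=yB_1$ must vanish because $G$-regularity gives $(x_i,y_i)\neq(0,0)$), and existence is the same dimension count driven by the diagonal constraints $u_iy_i=x_iv_i$, merely packaged as the direct bound $\dim\{\wh B:[\wh A,\wh B]\in\fM\}\le 2k+1$ instead of the paper's bound $\dim\pi(H)\le k$ combined with $\dim\wh H\ge k+1$. Only the clause ``whether both or only one of $x_i,y_i$ vanish'' is misstated --- both cannot vanish, precisely by $G$-regularity, and that is exactly what makes each constraint nontrivial --- but this does not affect the argument.
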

\begin{proof} The assumption implies that $\wh{A}$ is $G$-conjugate to a matrix of the form \eqref{canonical0}, with  $\lambda_i$ distinct, and that, for any $i\leq k$, either $x_i$ or $y_i$ is nonzero.  Let $H$ be
the vector space of $\wh{B}\in \wh{\fG}$ such that $[\wh{A},\wh{B}]\in \fM$.
 The  set $\wh{H}$  of elements commuting with $\wh{A}$ is a subspace of $H$.  We aim to
show that
$$H=\wh{H}\oplus \bigl(\g\cap H\bigr).$$
This is sufficient, as an element of $\g\cap H$ commutes with $A$. 
\par
To prove existence of the above decomposition, first of all notice that the subspaces on
the right have $0$-intersection. Indeed, suppose to the contrary, that $C\in \wh{H}\cap
\g\cap H=\wh{H}\cap\g$. Then $C$ is a diagonal element of $\g$ and $[\wh{A},C]$ is the
element $yC-Cx$ of $\fM$. As $[\wh{A},C]=0$ and $x_i$ or $y_i$ is nonzero for any $i$,
$C$ must be $0$.
\par
To prove that any element of $H$ has the desired decomposition it is now sufficient to
show that $\dim \g\cap H\geq \dim H-k-1$ (since  $\dim\wh{H}\geq k+1$).
 Let $\pi:\wh{\fG}\rightarrow \fM$ be the orthogonal projection.  We observe that an
element $J+I$ of $\fM$, $J\in \fM^-$, $I\in \fM^|$, which is in $\pi(H)$ must satisfy the following condition: the
diagonal part of $xJ-Iy$ vanishes. Therefore (as either $x_i$ or $y_i\neq 0$ for any $i$)
$\dim \pi(H)\leq k$ and we are done.  
\end{proof}

We would like now a similar decomposition for an arbitrary $\tau$.

\begin{proposition} Let $\wh{A}\in \wh{\fG}^1$. Then any $\wh{B}$ satisfying
$[\wh{A},\wh{B}]\in \wh{\tau}+\fM$ can be written uniquely as $B_1+B_2$, $B_1\in \g$, with $[A,B_1]=0$,
 $[\wh{A},B_2]\in \wh{\tau}+\fM^|$. Moreover the entries of  $[\wh{A},B_2]$ are uniquely determined by $\tau$, the eigenvalues of $A$ and the eigenvalues of $\wh{A}$. \label{decompose2}\end{proposition}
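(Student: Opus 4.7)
The plan is to adapt the proof of Proposition~\ref{decompose} to the affine setting, and then derive the extra statement about $[\wh A,B_2]$ via a subtraction argument that reduces it to the $\tau=0$ case already treated.

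First I would conjugate $\wh A$ into the canonical form \eqref{canonical} (possible by Corollary~\ref{rss}), in which $A=\diag(\lambda_1,\dots,\lambda_k)$ has distinct eigenvalues, $y=(1,\dots,1)$, and $x_i$ and $\Lambda$ are given by \eqref{x_i} and $\Lambda=\tr\wh A-\tr A$ in terms of the eigenvalues of $A$ and $\wh A$. Any $B_1\in\g$ with $[A,B_1]=0$ is diagonal, $B_1=\diag(b_1,\dots,b_k)$, and a direct calculation yields
\begin{equation*}
[\wh A,B_1]=\begin{pmatrix}0 & -B_1 x\\ yB_1 & 0\end{pmatrix}\in\fM,
\end{equation*}
with bottom row $(b_1,\dots,b_k)$. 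Writing $[\wh A,\wh B]=\wh\tau+\alpha+\beta$ with $\alpha\in\fM^-$ and $\beta\in\fM^|$, I observe that $B_2:=\wh B-B_1$ satisfies $[\wh A,B_2]\in\wh\tau+\fM^|$ precisely when the bottom rows cancel, i.e.\ $yB_1=\alpha$. Since every $y_i$ is nonzero (as $\wh A\in\wh\fG^0$), this diagonal system has a unique solution, giving existence of the decomposition. For uniqueness, if $C=B_1-B_1'$ is the difference of two valid choices then $[A,C]=0$ forces $C$ diagonal, while $[\wh A,C]\in\fM^|$ forces $yC=0$, hence $C=0$.

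For the final claim, I would show that $[\wh A,B_2]$ depends only on $\wh A$ and $\tau$, not on $\wh B$. Given two solutions $\wh B^{(1)},\wh B^{(2)}$ of the moment equation with the same $\tau$, their difference satisfies $[\wh A,\wh B^{(1)}-\wh B^{(2)}]\in\fM$, the $\tau=0$ moment equation. Proposition~\ref{decompose} then provides a decomposition of this difference whose $B_2$-component commutes with $\wh A$, and by the uniqueness just established this component must equal $B_2^{(1)}-B_2^{(2)}$. Hence $[\wh A,B_2^{(1)}]=[\wh A,B_2^{(2)}]$, so $[\wh A,B_2]$ is intrinsic to $\wh A$ and $\tau$. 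Since the canonical form of $\wh A$ is determined by the eigenvalues of $A$ and $\wh A$, the entries of $[\wh A,B_2]$ are functions of $\tau$ and these eigenvalues alone.

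I expect the main obstacle to be this last step. The existence and uniqueness of the decomposition are a routine affine analogue of Proposition~\ref{decompose}, but the independence of $[\wh A,B_2]$ from $\wh B$ is not obvious a priori, and the subtraction trick that reduces the claim to the $\tau=0$ case is really the crux. A more computational alternative would be to solve the upper-left constraints directly in canonical form (obtaining diagonal relations like $x_i''=x_iy_i''-\tau$ and then expressing the right column), but this route seems both heavier and less conceptually informative.
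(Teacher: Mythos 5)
Your proof is correct and takes essentially the same route as the paper: normalise $\wh{A}$ to the canonical form \eqref{canonical}, solve uniquely for the diagonal $B_1$ whose bracket cancels the $\fM^-$-component (using that all $y$-entries are nonzero), and obtain the ``moreover'' clause by applying Proposition \ref{decompose} to a difference. The paper packages that last step slightly differently — it decomposes $B_2^{(1)}-B_2^{(2)}$ (i.e.\ two matrices with bracket in $\wh{\tau}+\fM^|$) directly rather than $\wh{B}^{(1)}-\wh{B}^{(2)}$ — but the mechanism is identical.
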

Recall that $\fM^|$ denotes the subset of $\wh{\fG}$ with all entries, apart from those in the last column, equal to zero.
\begin{proof} We can assume that $\wh{A}$ is of the form \eqref{canonical}, with $\lambda_i$ distinct. It is easy to see that there is a unique $B_1$ of the form $\diag(\mu_1,\dots,\mu_k,0)$, such that the $\fM^-$-components of $[\wh{A},B_1]$ and $[\wh{A},\wh{B}]$ coincide. This proves the first part of the proposition. For the second part, let $C_1,C_2$ be two $(k+1)\times (k+1)$-matrices  with $[\wh{A},C_i]\in \wh{\tau}+\fM^|$, $i=1,2$. We need to show that $[\wh{A},C_1-C_2]=0$.  We know that $m=[\wh{A},C_1-C_2]\in\fM^|$, and, therefore,  Proposition \ref{decompose} allows us to write $C_1-C_2=U_1+U_2$, where $U_1$ is diagonal and  $[\wh{A},U_2]=0$. Since all the $\fM^-$-entries of $\wh{A}$ are nonzero, $[\wh{A},U_1]\in\fM^|$ implies that $U_1=0$ and, hence, $m=0$.\end{proof}

\subsection{A symplectomorphism\label{smor}}
We denote by $\tilde{N}_{k,\tau}^{\rm reg}$ the open subset of $N_{k,\tau}\times \cx^2$ formed by $G$-orbits of $\bigl(\wh{A},\wh{B}\bigr)$ such that $\wh{A}\in \wh{\fG}^{1}$. 
Let $\wh{A}$ be of the form \eqref{canonical} and let $\wh{B}$ satisfy \eqref{moment2}. According to the last proposition, we can decompose $\wh{B}$ as $B_1+B_2$ with $B_1\in \g$, $[A,B_1]=0$ and $[\wh{A},B_2]=\wh{\tau}+m$, with $m\in\fM^|$. Therefore $B_1$ is diagonal:
\begin{equation} B_1=\diag(\mu_1,\dots,\mu_k,0).\label{B_1}\end{equation}
On the other hand, let $g$ be the matrix \eqref{g_{ij}} diagonalising  $\wh{A}$, so that $g\wh{A}g^{-1}=D_{\wh{\lambda}}=\diag\bigl(\wh{\lambda}_1,\dots,\wh{\lambda}_{k+1}\bigr)$. Then $\bigl[D_{\wh{\lambda}},gB_2g^{-1}\bigr]=g(\wh{\tau}+m)g^{-1}$. Therefore (using the second part of Proposition \ref{decompose2}), the off-diagonal terms of $gB_2g^{-1}$ are determined by the $\lambda_j$ and $\wh{\lambda}_i$ (and by $\tau$). Hence, 
the diagonal entries $\wh{\mu}_1,\dots,\wh{\mu}_{k+1}$ of $gB_2g^{-1}$ provide additional coordinates and we can write
\begin{equation} B_2=g^{-1}D_{\wh{\mu}}g+g^{-1}S g,\label{gS}\end{equation}
where $S$ is off-diagonal and depends only on $\tau,\lambda_{i},\wh{\lambda}_j$, $i=1,\dots,k$, $j=1,\dots,k+1$.

Let $\fH_k$ and $\fH_{k+1}$ denote Cartan subalgebras of $\gl(k,\cx)$ and $\gl(k+1,\cx)$, respectively, and  write ${\fH_k}^{\rm reg}$, $\fH_{k+1}^{\rm reg}$  for the Zariski open subsets, where the actions of $S_k$ and $S_{k+1}$ are free.  
The assignment
\begin{equation} \bigl(\wh{A},\wh{B}\bigr)\longmapsto \left(\lambda_i,\wh{\lambda}_j,\mu_i,\wh{\mu}_j\right)_{\stackrel{\scriptscriptstyle i=1,\dots,k}{\scriptscriptstyle j=1,\dots,k+1}}
\label{map}\end{equation}
gives a well defined map $\pi:\tilde{N}_{k,\tau}^{\rm reg}\rightarrow \left(T^\ast\fH_k^{\rm reg}\times T^\ast\fH_{k+1}^{\rm reg}\right)/S_k\times S_{k+1}$. 
\begin{remark} The map $\pi$ is equivariant for the action of $\cx^2$, given by $$\wh A\mapsto \wh A+z_1\cdot 1,\enskip \wh B\mapsto \wh B+z_2\cdot 1,\enskip \lambda_i\mapsto \lambda_i+ z_1,\enskip \wh{\lambda}_j\mapsto \wh{\lambda}_j+ z_1,\enskip \mu_i\mapsto\mu_i,\enskip \wh{\mu}_j\mapsto\wh{\mu}_j+ z_2.$$ The slice $N_{k,\tau}$ to this action is defined by $\wh{A}_{k+1,k+1}=0= \wh{B}_{k+1,k+1}$ and this does not map, via $\pi$, to a linear subspace. This is the reason why we consider $N_{k,\tau}\times\cx^2$, instead of $N_{k,\tau}$.\end{remark}
We have:
\begin{proposition} The map $\pi$ is a symplectic isomorphism.\label{symplectomorphism}\end{proposition}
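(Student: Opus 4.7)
I would prove the proposition in three stages: well-definedness of $\pi$, construction of an inverse, and the symplectic identity itself.

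First, well-definedness and invertibility. The eigenvalues $\lambda_i$ of $A$ and $\wh\lambda_j$ of $\wh A$ are manifestly $G$-invariant as elements of $\fH_k^{\rm reg}/S_k$ and $\fH_{k+1}^{\rm reg}/S_{k+1}$; the decomposition $\wh B=B_1+B_2$ of Proposition \ref{decompose2} is canonical and hence $G$-equivariant, so the eigenvalues $\mu_i$ of $B_1$ (which commutes with the regular matrix $A$) and the diagonal entries $\wh\mu_j$ of $gB_2g^{-1}$ in any basis diagonalising $\wh A$ are well defined modulo $S_k\times S_{k+1}$. Conversely, given $(\lambda,\wh\lambda,\mu,\wh\mu)$ in the target, Corollary \ref{rss} together with the canonical form \eqref{canonical} (with $x_i$ from \eqref{x_i} and $\Lambda=\sum_j\wh\lambda_j-\sum_i\lambda_i$) picks out a unique $G$-orbit of $\wh A\in\wh{\fG}^1$; taking $B_1=\diag(\mu_1,\ldots,\mu_k,0)$ together with $B_2=g^{-1}(D_{\wh\mu}+S)g$, where $S$ is the off-diagonal matrix afforded by Proposition \ref{decompose2}, produces an inverse to $\pi$.

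The main step is the symplectic identity $\pi^\ast\omega_{\rm std}=\tr d\wh A\wedge d\wh B$. Writing $d\wh B=dB_1+dB_2$, one has
\begin{equation*}\tr d\wh A\wedge dB_1=\sum_{i=1}^k d\wh A_{ii}\wedge d\mu_i=\sum_{i=1}^k d\lambda_i\wedge d\mu_i,\end{equation*}
because $B_1=\sum_i\mu_iE_{ii}$ and $\wh A_{ii}=\lambda_i$ on the slice \eqref{canonical}. For the remaining piece, conjugate by the matrix $g$ of \eqref{g_{ij}}--\eqref{g^{ij}}: setting $\xi=D_{\wh\lambda}$, $\eta=D_{\wh\mu}+S$ and $\theta=dg\cdot g^{-1}$, the identities $d\wh A=g^{-1}(d\xi+[\xi,\theta])g$ and $dB_2=g^{-1}(d\eta+[\eta,\theta])g$ combined with cyclicity of trace yield
\begin{equation*}\tr d\wh A\wedge dB_2=\tr d\xi\wedge d\eta+\tr d\xi\wedge[\eta,\theta]+\tr[\xi,\theta]\wedge d\eta+\tr[\xi,\theta]\wedge[\eta,\theta].\end{equation*}
The principal summand is $\sum_j d\wh\lambda_j\wedge d\wh\mu_j$ because $d\xi$ is diagonal while $dS$ is off-diagonal.

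The principal obstacle, and the technically delicate part of the proof, is to show that the three residual cross-terms vanish on the slice. Two inputs drive the cancellation. First, the matrices $g$ and $S$ depend only on $(\lambda,\wh\lambda)$, so $\theta$ and $dS$ carry no $d\mu$- or $d\wh\mu$-components; the only place where $d\wh\mu$ could enter — the $\theta\wedge\theta$-piece of $\tr[\xi,\theta]\wedge[\eta,\theta]$ with coefficient $(\wh\lambda_i-\wh\lambda_j)(\wh\mu_j-\wh\mu_i)$ — vanishes under the substitution $i\leftrightarrow j$ combined with $\theta_{ij}\wedge\theta_{ji}=-\theta_{ji}\wedge\theta_{ij}$. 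Second, the moment-map equation from Proposition \ref{decompose2}, in the diagonalising basis $[\xi,\eta]=g(\wh\tau+m^|)g^{-1}$, supplies (by differentiation) the identity needed to group the remaining $d\lambda\wedge d\wh\lambda$-contributions; combined with the Maurer-Cartan relation $d\theta=\theta\wedge\theta$ for the right-invariant form, these collapse to zero. This establishes $\pi^\ast\omega_{\rm std}=\tr d\wh A\wedge d\wh B$ and completes the proof.
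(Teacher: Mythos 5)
Your outline follows essentially the same route as the paper: split $\wh B=B_1+B_2$, read off $\sum_i d\lambda_i\wedge d\mu_i$ from the $B_1$-part, conjugate the $B_2$-part and extract $\sum_j d\wh\lambda_j\wedge d\wh\mu_j$; up to that point your computation is correct (including the antisymmetry cancellation of the $\wh\mu$-dependent piece of the $\theta\wedge\theta$ term), and the well-definedness/inverse discussion via \eqref{canonical}, \eqref{x_i} and Proposition \ref{decompose2} is fine. The gap sits exactly at the step you yourself call delicate. The surviving residual terms $\tr\bigl(dD_{\wh\lambda}\wedge[S,\theta]\bigr)+\tr\bigl([D_{\wh\lambda},\theta]\wedge dS\bigr)+\tr\bigl([D_{\wh\lambda},\theta]\wedge[S,\theta]\bigr)$ collect, via $[D_{\wh\lambda},S]=g(\wh\tau+m)g^{-1}$ with $m\in\fM^|$, into $d\bigl[\tr\bigl((\wh\tau+m)\,g^{-1}dg\bigr)\bigr]$, and this does \emph{not} vanish for the formal reasons you invoke: after using Maurer--Cartan and differentiating the moment-map identity one is still left with terms of the type $\tr\bigl(dm\wedge g^{-1}dg\bigr)$ and $\tr\bigl((\wh\tau+m)\,g^{-1}dg\wedge g^{-1}dg\bigr)$, which do not cancel on general grounds. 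What actually kills the residue are two structural facts that never appear in your argument: (i) $\wh\tau+m=\tau\cdot 1+n$ with $n$ supported entirely in the last column (since $m\in\fM^|$ and $\wh\tau-\tau\cdot 1=-(k+1)\tau\,e_{k+1,k+1}$), and (ii) for the specific diagonalising matrix $g$ of \eqref{g_{ij}}, normalised by the slice \eqref{canonical}, the last row of $g^{-1}$ is constant (all entries $1$, cf. \eqref{g^{ij}}). Given these, $\tr\bigl(n\,g^{-1}dg\bigr)=-\tr\bigl(n\,d(g^{-1})\,g\bigr)$ vanishes identically as a $1$-form, while $d\bigl[\tau\tr(g^{-1}dg)\bigr]=-\tau\,\tr\bigl(g^{-1}dg\wedge g^{-1}dg\bigr)=0$, so the whole residual contribution dies.

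Note, as a sanity check on why a purely formal manipulation cannot suffice: $g$ is determined only up to left multiplication by a point-dependent diagonal matrix, and the intermediate $1$-form $\tr\bigl((\wh\tau+m)\,g^{-1}dg\bigr)$ is not invariant under that ambiguity, so any argument at the level of $\theta$, $dS$ and the moment-map identity alone, without pinning down $g$ by the normalisation (last row of \eqref{canonical} equal to $(1,\dots,1,\Lambda)$) and using the resulting last-row/last-column structure, cannot close the computation. Insert facts (i)--(ii) above and your proof becomes complete and coincides with the paper's.
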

\begin{proof}
We compute:
$$
\tr\bigl(d \widehat{A} \wedge d \widehat{B}\bigr) =\tr\bigl(d\wh{A}\wedge dB_1\bigr)+\tr\bigl(d\wh{A}\wedge dB_2\bigr)=$$
$$   \tr\left(d \widehat{A} \wedge d D_{\mu}\right)  + 
\tr\left(d \widehat{A} \wedge d(g^{-1}(D_{\widehat{\mu}}+S)g)\right) =
$$
$$
=\sum_pd\lambda_p \wedge d \mu_p + \tr\left(dD_{\widehat{\lambda}}^{g^{-1}}) \wedge d\bigl(D_{\widehat{\mu}}+S\bigr)^{g^{-1}}\right).
$$
Now,
$$\tr\left(dD_{\widehat{\lambda}}^{g^{-1}}) \wedge d\bigl(D_{\widehat{\mu}}+S\bigr)^{g^{-1}}\right)
 = d \left( \tr D_{\widehat{\lambda}}^{g^{-1}}  d \bigl(D_{\widehat{\mu}}+S\bigr)^{g^{-1}} \right),
$$

and
$$
 \tr D_{\widehat{\lambda}}^{g^{-1}}  d D_{\widehat{\mu}}^{g^{-1}}= 
 \tr D_{\widehat{\lambda}}^{g^{-1}} \left( (d D_{\widehat{\mu}})^{g^{-1}} - \bigl[g^{-1}dg, D_{\widehat{\mu}}^{g^{-1}}\bigr] \right) = 
$$
$$
 = \tr D_{\widehat{\lambda}} d D_{\widehat{\mu}} - \tr D_{\widehat{\lambda}}^{g^{-1}}\bigl[g^{-1}dg, D_{\widehat{\mu}}^{g^{-1}}\bigr] = 
\tr D_{\widehat{\lambda}}d D_{\widehat{\mu}} + \tr D_{\widehat{\lambda}}[dgg^{-1}, D_{\widehat{\mu}}]
$$
$$
= \tr D_{\widehat{\lambda}}d D_{\widehat{\mu}} +0,
$$
where we used the fact that off-diagonal matrix $[g^{-1}dg,D_{\widehat{\mu}}]$ is perpendicular to the diagonal one. 

We compute the remaining term:
$$
 \tr D_{\widehat{\lambda}}^{g^{-1}}  d S^{g^{-1}}=\tr D_{\widehat{\lambda}}^{g^{-1}} \left( (dS)^{g^{-1}} - \bigl[g^{-1}dg, S^{g^{-1}}\bigr] \right) = 0-\tr D_{\widehat{\lambda}}^{g^{-1}}\bigl[g^{-1}dg, S^{g^{-1}}\bigr],
$$
since $S$ is off-diagonal, and 
$$ -\tr D_{\widehat{\lambda}}^{g^{-1}}\bigl[g^{-1}dg, S^{g^{-1}}\bigr]= \tr \bigl[D_{\widehat{\lambda}},S\bigr]dgg^{-1}=\tr \bigl(g(\wh{\tau}+m)g^{-1}\bigr) dgg^{-1}=\tr (\wh{\tau}+m)g^{-1}dg.$$
Thus, to prove the result, we need to show that $d\tr (\wh{\tau}+m)g^{-1}dg=0$. We can write $\wh{\tau}=\tau\cdot 1-(k+1)\tau e_{k+1,k+1}$, and, consequently:
$$d\tr \wh{\tau}g^{-1}dg=d\tr \bigl(\tau\cdot 1-(k+1)\tau e_{k+1,k+1}\bigr)g^{-1}dg= $$
$$ \tau \tr d\bigl(g^{-1}dg\bigr)-(k+1)\tau d\tr e_{k+1,k+1}g^{-1}dg.$$
The first term is null and, hence, it is sufficient  to show that 
$\tr ng^{-1}dg=0$ for any $(k+1)\times(k+1)$-matrix $n$, the only nonzero entries of which are in the last column. We have, however, $\tr ng^{-1}dg=-\tr nd\bigl(g^{-1}\bigr)g$ and this vanishes, since the last row of $g^{-1}$ is constant (cf. \eqref{g^{ij}}). 
\end{proof}

Thus, restricting functions on $N_{k,\tau}\times\cx^2$ to $\tilde{N}_{k,\tau}^{\rm reg}$ induces a monomorphism of Poisson algebras:
\begin{equation} \cx\bigl[N_{k,\tau}\times\cx^2\bigr]\hookrightarrow 
\cx\left[T^\ast\fH_k^{\rm reg}\right]^{S_k} \otimes \cx\left[T^\ast\fH_{k+1}^{\rm reg}\right]^{S_{k+1}}.\label{Poisson}\end{equation}

Let $\Delta\in \cx\bigl[N_{k,\tau}\times \cx^2\bigr]$ be defined as
\begin{equation} \Delta(\wh{A},\wh{B})=\prod_{i\neq j}(\lambda_i-\lambda_j)\prod_{m\neq n}(\wh\lambda_i-\wh\lambda_j),\label{Delta}\end{equation}
where $\lambda_i$ are the eigenvalues of $A$ and $\wh\lambda_m$ are the eigenvalues of $\wh{A}$. $\Delta$ is also an $S_k\times S_{k+1}$-invariant function on $\fH_k\times \fH_{k+1}$ and, hence, on $T^\ast(\fH_k\times \fH_{k+1})$. Recall also that a localisation of a Poisson algebra by a multiplicative set is also a Poisson algebra.
\par
Since all denominators in the formulae \eqref{g_{ij}} and \eqref{g^{ij}} are factors of $\Delta$, the localisation of $\cx\bigl[N_{k,\tau}\times \cx^2\bigr]$ by the  multiplicative set generated by $\Delta$ is naturally identified with $\cx\bigl[\tilde{N}_{k,\tau}^{\rm reg}\bigr]$. Thus:
\begin{corollary} The localisation of $\cx\bigl[N_{k,\tau}\times \cx^2\bigr]$  by the  multiplicative set generated by $\Delta$ is isomorphic, as a Poisson algebra, to  $\left(\cx\bigl[T^\ast(\fH_k\times \fH_{k+1})\bigr]_\Delta\right)^{S_k\times S_{k+1}}$.\end{corollary}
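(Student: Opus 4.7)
The plan is to combine the symplectic isomorphism $\pi$ of Proposition~\ref{symplectomorphism} with an identification of the two sides of the claim as rings of regular functions on the same Zariski open subset.

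First I treat the right-hand side. In $T^\ast(\fH_k\times\fH_{k+1})$, the discriminant $\Delta$ vanishes precisely off $T^\ast\fH_k^{\rm reg}\times T^\ast\fH_{k+1}^{\rm reg}$, so the localisation $\cx[T^\ast(\fH_k\times\fH_{k+1})]_\Delta$ is the coordinate ring of this product. Since $S_k\times S_{k+1}$ acts freely there, its invariants coincide with the coordinate ring of the geometric quotient $(T^\ast\fH_k^{\rm reg}\times T^\ast\fH_{k+1}^{\rm reg})/(S_k\times S_{k+1})$. Pull-back by $\pi$ from Proposition~\ref{symplectomorphism} then identifies this, as a Poisson algebra, with $\cx[\tilde N_{k,\tau}^{\rm reg}]$.

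Second I treat the left-hand side: the aim is to identify $\cx[\tilde N_{k,\tau}^{\rm reg}]$ with $\cx[N_{k,\tau}\times\cx^2]_\Delta$. One inclusion is immediate, since $\Delta$ is nowhere vanishing on $\tilde N_{k,\tau}^{\rm reg}$. For the reverse inclusion, I show that every regular function on $\tilde N_{k,\tau}^{\rm reg}$ can be expressed with denominator a power of $\Delta$. A generating set is provided by $\pi^\ast$ applied to $S_k\times S_{k+1}$-invariant polynomials in $\lambda_i,\wh\lambda_j,\mu_i,\wh\mu_j$. Symmetric functions of the eigenvalues are coefficients of the characteristic polynomials of $A$ and $\wh A$, hence regular on $N_{k,\tau}\times\cx^2$. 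Symmetric functions in the momenta $\mu_i,\wh\mu_j$ are recovered from $\wh B$ by the decomposition $\wh B=B_1+B_2$ of Proposition~\ref{decompose2} combined with the diagonalisation by $g$; the explicit formulas \eqref{g_{ij}} and \eqref{g^{ij}}, together with the description of $S$ in \eqref{gS}, show that the only denominators introduced are products of factors $\lambda_i-\lambda_j$ and $\wh\lambda_i-\wh\lambda_j$, each of which divides $\Delta$.

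The main technical step, and the one requiring real care, is this last bookkeeping: $g$, $g^{-1}$, and the off-diagonal remainder $S$ each contribute poles along $\{\Delta=0\}$, and one must verify that no additional denominators appear when the invariants in $\mu_i,\wh\mu_j$ are re-expressed in terms of $\wh A,\wh B$. Once verified, the two identifications fit together to yield the claimed Poisson isomorphism of localised algebras.
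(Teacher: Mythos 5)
Your argument is correct and takes essentially the same route as the paper: one identifies the right-hand side with $\cx\bigl[\tilde{N}_{k,\tau}^{\rm reg}\bigr]$ via the symplectomorphism $\pi$ of Proposition \ref{symplectomorphism}, and identifies the localisation $\cx\bigl[N_{k,\tau}\times\cx^2\bigr]_\Delta$ with $\cx\bigl[\tilde{N}_{k,\tau}^{\rm reg}\bigr]$ by checking that the only denominators arising (from $g$, $g^{-1}$ and $S$ in the decomposition $\wh{B}=B_1+B_2$) are factors of $\Delta$. The denominator bookkeeping you flag as the remaining technical step is precisely the paper's one-line justification, namely that all denominators in \eqref{g_{ij}} and \eqref{g^{ij}} divide $\Delta$.
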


 \begin{corollary} The rational function field of $N_{k,\tau}\times \cx^2$ is isomorphic, as a Poisson field, to the subfield of $(S_k\times S_{k+1})$-invariants in the rational function field of $T^\ast(\fH_k\times \fH_{k+1}) $.\label{field}\end{corollary}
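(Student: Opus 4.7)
The plan is to deduce this as a formal consequence of the preceding corollary by passing to fraction fields on both sides. The preceding corollary identifies $\cx\bigl[N_{k,\tau}\times \cx^2\bigr]_\Delta$, as a Poisson algebra, with $\bigl(\cx\bigl[T^\ast(\fH_k\times \fH_{k+1})\bigr]_\Delta\bigr)^{S_k\times S_{k+1}}$, so it suffices to show that taking fraction fields on both sides recovers the objects named in the statement, respecting the Poisson structure.

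First I would note that $N_{k,\tau}\times \cx^2$ is irreducible, so that ``rational function field'' is well defined. For $\tau\neq 0$ this follows because $N_{k,\tau}$ is a smooth connected symplectic quotient; for $\tau=0$ it is a standard fact about the variety of ideal instantons. Irreducibility implies that $\cx\bigl[N_{k,\tau}\times\cx^2\bigr]$ is an integral domain and that localising at the nonzero element $\Delta$ does not change its fraction field, which by definition is the rational function field of $N_{k,\tau}\times\cx^2$. The same reasoning applies on the right: $\cx\bigl[T^\ast(\fH_k\times\fH_{k+1})\bigr]$ is a polynomial ring, hence a domain, and its localisation at $\Delta$ has the same fraction field.

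Second, I would invoke the classical fact that if a finite group $G$ acts by automorphisms on an integral domain $R$, then $\text{Frac}(R^G)=\text{Frac}(R)^G$: any $G$-invariant element $f/g$ of $\text{Frac}(R)$ can be rewritten as $\bigl(f\prod_{\sigma\neq e}\sigma(g)\bigr)\big/\prod_{\sigma\in G}\sigma(g)$, whose denominator is manifestly $G$-invariant and whose numerator must then be $G$-invariant as well. Applied to $R=\cx\bigl[T^\ast(\fH_k\times\fH_{k+1})\bigr]_\Delta$ with $G=S_k\times S_{k+1}$, this identifies the fraction field of the invariants with the $G$-invariants of the fraction field, i.e.\ with the $(S_k\times S_{k+1})$-invariants in the rational function field of $T^\ast(\fH_k\times \fH_{k+1})$.

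Finally, the Poisson bracket extends uniquely from any commutative Poisson domain to its fraction field by the Leibniz rule $\{f/g,h\}=\bigl(\{f,h\}g-f\{g,h\}\bigr)/g^2$; this extension is automatically compatible with the isomorphism of the preceding corollary, so the induced isomorphism of fraction fields is a Poisson isomorphism. I expect no serious obstacle: the only non-formal ingredient is the verification of irreducibility of $N_{k,\tau}\times\cx^2$, which is where care is required in the $\tau=0$ case, but everything else is a direct consequence of the previous corollary plus standard facts about finite group invariants.
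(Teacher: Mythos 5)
Your argument is correct and is essentially the paper's (implicit) proof: the paper states this corollary without further argument as an immediate consequence of the preceding localisation corollary, i.e.\ by passing to fields of fractions, which is exactly what you do, together with the standard facts that localising a domain at a nonzero element does not change its fraction field and that $\operatorname{Frac}(R^G)=\operatorname{Frac}(R)^G$ for a finite group $G$. Your explicit attention to irreducibility of $N_{k,\tau}\times\cx^2$ (needed for the rational function field to make sense) is a reasonable point of care that the paper passes over silently.
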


\subsection{Complete integrability of instanton moduli spaces\label{integrable}}

As another application of Proposition \ref{symplectomorphism}, we observe that the instanton moduli spaces $N_{k,\tau}$ are algebraically completely integrable. Indeed, the dimension of $N_{k,\tau}$ is $4k$, and Proposition \ref{symplectomorphism} implies that the functions
\begin{equation} \Phi_i\bigl(\wh{A},\wh{B}\bigr)=\tr A^i,\quad \wh\Phi_j\bigl(\wh{A},\wh{B})=\tr \wh{A}^j,\label{Phi}\end{equation}
all Poisson commute, and that the only relation among $\Phi_1,\dots,\Phi_k, \wh\Phi_1,\dots,\wh\Phi_{k+1}$ is $\Phi_1=\wh\Phi_1$.

The corresponding abelian subgroup of symplectomorphisms of $N_{k,\tau}\times \cx^2$ is generated by 
\begin{equation}\bigl(\wh{A},\wh{B}\bigr)\mapsto  \bigl(\wh{A},\wh{B}+A^p\bigr),\quad \bigl(\wh{A},\wh{B}\bigr)\mapsto  \bigl(\wh{A},\wh{B}+\wh{A}^q\bigr),\quad p,q\in \oZ_{\geq 0},\label{group}\end{equation}
while on $N_{k,\tau}$, we have to project $\wh{B}+\wh{A}^q$ onto $\fS_{k+1}$, i.e. set the $(k+1,k+1)$-entry to $0$.

The group \eqref{group} is isomorphic to $\cx^{2k}$ and should be viewed as a $2$-step analogue of the Gelfand-Zeitlin group considered in \cite{KW}.

We have:
\begin{proposition} Suppose that both $A$ and $\wh{A}$ are regular semisimple and that $\wh{A}$ is $G$-regular. Then the action of the group \eqref{group} is transitive on the fibre $\Psi^{-1}\bigl(\Psi(\wh{A})\bigr)$, where $\Psi=\bigl(\Phi_1,\dots,\Phi_k, \wh\Phi_1,\dots,\wh\Phi_{k+1}\bigr)$.\label{trans}\end{proposition}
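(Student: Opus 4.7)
The plan is to pull the problem across the symplectomorphism $\pi$ of Proposition \ref{symplectomorphism}. The hypothesis places $\wh A$ in $\wh{\fG}^1$, so the $G$-orbit of $(\wh A,\wh B)$ is a point of $\tilde N_{k,\tau}^{\rm reg}$, and I may work in the coordinates $(\lambda_i,\wh\lambda_j,\mu_i,\wh\mu_j)$. In these coordinates the Hamiltonians $\Phi_p$ and $\wh\Phi_q$ become the power sums $\sum_i\lambda_i^p$ and $\sum_j\wh\lambda_j^q$, which depend only on the ``position'' variables; hence the fibre $\Psi^{-1}(\Psi(\wh A))$ is obtained by freezing the multisets $(\lambda_i)$ and $(\wh\lambda_j)$ and letting $(\mu_i,\wh\mu_j)$ range freely over $\cx^k\times\cx^{k+1}$.

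The bulk of the work is to verify that the two families of generators in \eqref{group} act on these coordinates by the shifts
\[ \mu_i\longmapsto \mu_i+\lambda_i^p,\qquad \wh\mu_j\longmapsto\wh\mu_j+\wh\lambda_j^q, \]
respectively, leaving all other coordinates fixed. For $\wh B\mapsto\wh B+A^p$, with $A^p$ viewed as a $(k+1)\times(k+1)$ matrix having vanishing last row and column, I would invoke the uniqueness clause of Proposition \ref{decompose2}: since $A^p\in\g$ and $[A,A^p]=0$, the decomposition of $\wh B+A^p$ is forced to be $(B_1+A^p,\,B_2)$, and reading off the diagonal in the normal form \eqref{canonical} produces the first shift. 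For $\wh B\mapsto\wh B+\wh A^q$, the commutation $[\wh A,\wh A^q]=0$ together with the same uniqueness gives the decomposition $(B_1,\,B_2+\wh A^q)$; conjugating by the diagonalising $g$ of \eqref{g_{ij}} then moves the diagonal part of $gB_2g^{-1}$ from $D_{\wh\mu}$ to $D_{\wh\mu+\wh\lambda^q}$, which is the second shift.

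With these explicit formulas in hand, transitivity reduces to a Vandermonde argument. The group \eqref{group} acts on the fibre by
\[ (\mu_i,\wh\mu_j)\longmapsto\bigl(\mu_i+P(\lambda_i),\,\wh\mu_j+Q(\wh\lambda_j)\bigr) \]
for arbitrary $P,Q\in\cx[z]$; regular semisimplicity of $A$ and of $\wh A$ ensures that the $\lambda_i$ are pairwise distinct and likewise the $\wh\lambda_j$, so Lagrange interpolation supplies polynomials $P$ of degree $\leq k-1$ and $Q$ of degree $\leq k$ realising any prescribed pair of shift-vectors in $\cx^k\times\cx^{k+1}$. The only point that needs real care is the translation of the two matrix shifts into the $(\mu,\wh\mu)$-coordinates via Proposition \ref{decompose2}; once this is established, the rest is routine linear algebra.
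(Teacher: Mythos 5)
Your reduction only covers part of the hypothesis, and the part it misses is precisely the point of the proposition. The opening claim ``the hypothesis places $\wh A$ in $\wh{\fG}^1$'' is false: membership in $\wh{\fG}^1$ requires $\wh A\in\wh{\fG}^0$, which in the normal form \eqref{canonical0} means $y_i\neq 0$ for \emph{every} $i$, whereas $G$-regularity only demands that for each $i$ at least one of $x_i,y_i$ be nonzero. For example, with $k=1$ the matrix $\wh A=\begin{pmatrix}\lambda&1\\0&\Lambda\end{pmatrix}$, $\lambda\neq\Lambda$, satisfies the hypothesis ($A$ and $\wh A$ regular semisimple, $\wh A$ $G$-regular) but violates \eqref{S0}, since $A$ and $\wh A$ share the eigenvector--eigenvalue pair $(1,\lambda)$. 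At such points the slice \eqref{canonical}, Proposition \ref{decompose2} and the symplectomorphism $\pi$ of Proposition \ref{symplectomorphism} --- all of which are established only on the locus $\wh A\in\wh{\fG}^1$, i.e.\ on $\tilde N_{k,\tau}^{\rm reg}$ --- are not available, so you cannot ``work in the coordinates $(\lambda_i,\wh\lambda_j,\mu_i,\wh\mu_j)$''. The remark after the proposition makes clear that ``no common eigenvalue'' is merely a \emph{sufficient} condition: the statement is deliberately phrased with the weaker $G$-regularity hypothesis so as to include these degenerate points, and your argument never reaches them.

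The paper closes exactly this gap by generalising the decomposition instead of invoking $\pi$: it fixes a subset $P\subset\{1,\dots,k\}$ recording the vanishing pattern of the $x_i,y_i$, conjugates $\wh A$ into the normal form $\wh{\fG}_P$ (with $y_i=1$ for $i\in P$ and $x_i=1$ for $i\notin P$), replaces $\fM^|$ by $\fM_P$, and reruns the proof of Proposition \ref{decompose2} to obtain a splitting $\wh B=B_1+B_2$ with $[A,B_1]=0$ and $[\wh A,B_2]\in\wh\tau+\fM_P$ determined by $\Psi(\wh A)$; only then does the regular-semisimplicity argument (essentially your interpolation step, which is fine) give transitivity. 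A second, related inaccuracy: identifying the fibre with ``$(\lambda_i),(\wh\lambda_j)$ frozen, $(\mu_i,\wh\mu_j)$ free in $\cx^k\times\cx^{k+1}$'' is also only valid when $A$ and $\wh A$ have no common eigenvalue; in general $\Psi$ pins down only the products $x_iy_i$, so the fibre can meet several $G$-classes of $\wh A$ (e.g.\ $x_i=0,\,y_i\neq0$ versus $x_i\neq0,\,y_i=0$), and what is actually established --- in the paper, and what your argument should aim at --- is transitivity on the set of $\wh B$ compatible with the given $\wh A$. In short, your shift formulas $\mu_i\mapsto\mu_i+\lambda_i^p$, $\wh\mu_j\mapsto\wh\mu_j+\wh\lambda_j^q$ reproduce the paper's final step on the open stratum $\wh{\fG}^1$, but the extension of the decomposition \eqref{gS} beyond $\wh{\fG}^1$, which is the real content of the paper's proof, is missing.
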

\begin{remark} Corollary \ref{rss} implies that the assumption is satisfied, if  $A$ and $\wh{A}$ are regular semisimple without a common eigenvalue.\end{remark}
\begin{proof} Let $P\subset \{1,\dots,k\}$ and write 
$$\fM_P=\{C\in \wh s; \enskip C_{ij}\neq 0\Rightarrow \text{$j=k+1$ $\&$ $i\in P$ or  $i=k+1$ $\&$ $j\not\in P$}\}.$$ 
Thus, $\fM_P\subset \fM$ and $\fM_\emptyset= \fM^-$. Let us also define $\wh\fG_P$ as the set of $C$, such that $C_{ij}=1$, if $j=k+1, i\not\in P$ or $i=k+1,j\in P$. 
According to the remarks after \eqref{canonical0}, the assumption implies that $\wh A$ can be $G$-conjugated to matrix, which lies in $\wh\fG_P$ for some $P$ and such that $A$ is diagonal. We can now repeat the proof of Proposition \ref{decompose2}, and show, that any $\wh B$, satisfying $\bigl[ \wh A,\wh B\bigr]\in \wh\tau+\fM$, can be decomposed as $B_1+B_2$, with $[A,B_1]=0$ and $[\wh A,B_2]\in \wh\tau +\fM_P$, with the entries of $[\wh A,B_2]$ determined by $\Psi(\wh A)$. Thus, $B_2$ can be written as in \eqref{gS}, and since $A,\wh A$ are regular semisimple, the action $B_1\mapsto B_1+A^p$, $B_2\mapsto B_2+\wh A^q$, $p,q\in \oZ_{\geq 0}$, is transitive on the set of $B_1,B_2$ corresponding to $\wh{A}$.
\end{proof}

\begin{remark} As remarked in the introduction, the complete integrability of $N_{k,\tau}$ follows also from general results about quiver varieties.\end{remark}

\subsection{The Hamiltonian\label{Ham}}

With the choice of coordinates given in section \ref{smor}, the natural Hamiltonian to consider is $H_\tau=\tr \wh B^2$. We compute $H_0$, which is also the quadratic (in the $\mu_i$-s and $\wh\mu_i$-s) part of any $H_\tau$: 
$$ \tr\wh B^2=\tr(B_1+B_2)^2=\tr B_1^2 +\tr B_2^2+2\tr B_1B_2=\tr D_\mu^2+\tr D_{\wh\mu}^2+ 2\tr D_\mu g^{-1}D_{\wh\mu}g.$$
Hence:
$$H_0=\sum_{i=0}^k\mu_i^2+\sum_{i=0}^{k+1}\wh\mu_i^2+2\sum_{i=1}^{k}\sum_{j=1}^{k+1}\frac{\prod_{n\neq j}(\lambda_i-\wh{\lambda}_n)\prod_{m\neq i}(\wh{\lambda}_j-{\lambda}_m)}{\prod_{m\neq i}({\lambda}_i-{\lambda}_m)\prod_{n\neq j}(\wh{\lambda}_j-\wh{\lambda}_n)}\mu_i\wh\mu_j.
$$

\bigskip

{\bf Part 2. Non-commutative symplectic geometry of the instanton quiver}

\medskip

Let  $Q$ be the quiver \eqref{Q2}. We label the two vertices as $1$ and $2$ (with the loop being at $1$). We label the arrows of $Q$ as $a,x,y$, with $a$ being the loop, $x$ going from $2$ to $1$ and $y$ from $1$ to $2$. The remaining arrows of $\ol Q$ are denoted by $a^\ast,x^\ast, y^\ast$, with $z^\ast$ being an arrow in the opposite direction to $z$. Let $V=\left(\cx^k,\cx^l\right)$ and denote, as usual, by $\sR(\ol Q,V)\simeq T^\ast \sR( Q,V)$ the symplectic vector space of all representations of $\ol{Q}$ in $V$. Thus, an element of $\sR(\ol Q,V)$ is $(A,B,X_1,X_2,Y_1,Y_2)$ with $A,B\in M_{k\times k}$, $X_1,X_2\in M_{k\times l}$, $Y_1,Y_2\in M_{l\times k}$. The group $PGL(V)=\bigl(GL(k,\cx)\times GL(l,\cx)\bigr)/\cx^\ast$ acts effectively on $\sR(\ol Q,V)$, inducing a Hamiltonian action on  $\sR(\ol Q,V)$, with moment map $\nu$ given by
\begin{equation} \bigl([A,B]+X_1Y_2-X_2Y_1,Y_1X_2-Y_2X_1\bigr)\in \gl(k,\cx)\oplus \gl(l,\cx).\label{C}\end{equation}
 For  an adjoint orbit $O$ of $PGL(V)$, we denote by $\sR_O(\ol Q,V)$ the symplectic quotient $\nu^{-1}(O)/PGL(V)$. 
\par
If we write 
\begin{equation} i_1=-X_1,\enskip i_2=X_2,\enskip j_1=Y_1,\enskip j_2=Y_2,\label{ij}\end{equation}
then the moment map becomes $\bigl([A,B]-i_1j_1-i_2j_2,j_1i_1+j_2i_2\bigr)$, and so the instanton moduli spaces $N_{k,\tau}$ are of the form $\sR_O(\ol Q,V)$, if we set $l=1$ and
$$O=\begin{pmatrix} \tau\cdot 1_{k\times k} & 0\\ 0 & -k\tau\end{pmatrix}.$$
We remark that replacing $X$-s and $Y$-s with $i$-s and $j$-s is equivalent to passing from the double of the quiver \eqref{Q2} to the double of the quiver \eqref{Q1}.
\par
In what follows, we shall discuss certain notions of the non-commutative symplectic geometry in the case of the quiver   \eqref{Q2}.

\section{The path algebra and the necklace algebra}

Let $\cx \ol Q$ be the path algebra of $\overline{Q}$, i.e. an algebra generated by the arrows in $\ol Q$ (including the trivial ones) with multiplication given by the concatenation of paths (paths are written from right to left). It is an algebra over the ring $R=\cx^2$, with the idempotents of $R$ corresponding to constant paths at the two vertices of $Q$. The necklace algebra $\sL Q$ is, as a vector space, $\cx \ol Q/\bigl[\cx\ol Q,\cx \ol Q\bigr]$, i.e. elements of $\cx\ol Q$ modulo cyclic permutations of arrows. To define the Lie algebra bracket, observe first that, for every $w\in \ol Q$, there is a $\cx$-linear map
\begin{equation} \frac{\partial}{\partial w}:\cx \ol Q/\bigl[\cx\ol Q,\cx \ol Q\bigr] \rightarrow \cx \ol Q,\end{equation}
given on arrows by $\frac{\partial w}{\partial w}=1$, $\frac{\partial u}{\partial w}=0$, if $w\neq u\in \ol Q$, and, in general, by:
\begin{equation}  \frac{\partial}{\partial w}u_1\dots u_n=\sum_{i=1}^n \frac{\partial u_i}{\partial w}u_{i+1}\dots u_nu_1\dots u_{i-1}.\label{d/dw}\end{equation}
The Lie algebra bracket on $\sL Q=\cx \ol Q/\bigl[\cx\ol Q,\cx \ol Q\bigr]$
is then defined by \cite{LBB,Gin}:
\begin{equation} \{f,g\}_Q=\sum_{z\in Q}\left(\frac{\partial f}{\partial z}\frac{\partial g}{\partial z^\ast}-\frac{\partial f}{\partial z^\ast}\frac{\partial g}{\partial z}\right) \mod \bigl[\cx\ol Q,\cx \ol Q\bigr],\label{Q-bracket} \end{equation}
where the multiplication is in $\cx \ol Q$.
Ginzburg \cite[Proposition 3.4]{Gin} shows that $\sL Q$ is a central extension of the Lie algebra of symplectic derivations of $\cx \ol Q$.
\par
There exists a Lie algebra homomorphism from $\sL Q$ to the Poisson algebra of algebraic functions on any quiver variety for $\ol Q$.
 Namely, one considers $\cx^{k+l}=\cx^k\oplus \cx^l$ as an $R$-module, which makes $\gl(k+l,\cx)$ an $R$-bimodule. It is equipped with the map $\tr_R:\gl(k+l,\cx)\rightarrow R=\cx^2$, defined as taking separately the traces of the upper-left $k\times k$- and the lower-right $l\times l$-block. The canonical $R$-algebra homomorphism
\begin{equation} E:\cx \ol Q\longrightarrow \cx\left[\sR(\ol Q,V)\right]\otimes_R \gl(k+l,\cx)\label{evaluation}\end{equation}
evaluates each non-commutative polynomial in $\cx \ol Q$ on  the matrices corresponding to a point in $\sR(\ol Q,V)$. Taking now the trace $\tr_R$ on the second factor gives a map
\begin{equation}\wh{\tr}: \cx \ol Q\rightarrow \cx\left[\sR(\ol Q,V)\right]^{GL(V)},\label{hattr}\end{equation}
which clearly vanishes on $\bigl[\cx\ol Q,\cx \ol Q\bigr]$ and it induces a Lie algebra homomorphism from $\sL Q$ to $\cx\left[\sR(\ol Q,V)\right]^{GL(V)}$. According to a result of Le Bruyn and Procesi \cite{LBP}, $\cx\left[\sR(\ol Q,V)\right]^{GL(V)}$ is generated, as a $\cx$-algebra, by the image of $\wh\tr$.
A consequence of this fact, proved by Ginzburg \cite{Gin} and Bocklandt and Le Bruyn \cite{LBB}, is that a smooth quiver variety $\sR_O(\ol Q,V)$ is a  coadjoint orbit in the necklace Lie algebra of $Q$ (i.e. the Lie algebra action of $\sL Q$ on $\sR_O(\ol Q,V)$ is locally transitive).

\subsection{The necklace algebra as a non-commutative $\cx^2\times\gl(2,\cx)$}

We keep the notation of the previous section, in particular, the labelling of vertices and arrows in $\ol Q$. In addition, let $\pi_1,\pi_2$ be the two idempotents in $R=\cx^2$, corresponding to trivial paths at the vertices $1,2$ of $Q$. In the path algebra $\cx Q$ we have: $\pi_1 a=a\pi_1=a$, $\pi_1 x=x=x\pi_2$, $\pi_2 y=y=y\pi_1$, and the remaining products involving  $\pi_i$-s vanish. 
\par
 Let $\sA_1=\pi_1 \cx \ol Q \pi_1$ be the subalgebra of $\cx \ol Q$ generated by paths beginning and ending at the vertex $1$. Obviously, $\sA_1$ is a free $\cx$-algebra on $a,a^\ast, xx^\ast,xy, y^\ast x^\ast, yy^\ast$. We observe that
$$ \cx \ol Q/\bigl(\cx \pi_2+[\cx \ol Q,\cx \ol Q]\bigr)\simeq \sA_1/[\sA_1,\sA_1] \quad \text{as vector spaces}.$$
  Before computing the induced bracket on $\sA_1/[\sA_1,\sA_1]$, let us introduce a matrix
\begin{equation} E=\begin{pmatrix} e_{11} & e_{12}\\ e_{21}& e_{22}\end{pmatrix} =
\begin{pmatrix} -xx^\ast & -xy\\ y^\ast x^\ast& y^\ast y\end{pmatrix}= \begin{pmatrix} -x\\ y^\ast\end{pmatrix} \begin{pmatrix} x^\ast & 
y\end{pmatrix}. \label{e-s}\end{equation}
Thus, $\sA_1\simeq \cx\langle a,a^\ast,e_{11},e_{12},e_{21},e_{22}\rangle$. We compute the Lie bracket on the generators of $\sA_1$:
\begin{equation} \{a,a^\ast\}=1,\quad \{a,e_{ij}\}=\{a^\ast,e_{ij}\}=0,\quad \{e_{ij},e_{kl}\}=\delta_{jk}e_{il}-\delta_{il}e_{kj},\label{e-e} \end{equation}
which is just the linear part of the Poisson structure on $\cx^2\times \gl(2,\cx)$. The formula \eqref{Q-bracket} implies now:
\begin{proposition} The abelianisation map $$\cx\langle a,a^\ast,e_{11},e_{12},e_{21},e_{22}\rangle\rightarrow \cx[a,a^\ast,e_{11},e_{12},e_{21},e_{22}]$$ induces a surjective Lie algebra homomorphism $\sL Q/\cx \pi_2\rightarrow \cx\bigl[\cx^2\times \gl(2,\cx)\bigr]$, where the Lie bracket on the latter algebra is the standard Poisson structure of $\cx^2\times \gl(2,\cx)$.\hfill $\Box$\end{proposition}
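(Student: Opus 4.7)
The plan is to reduce the claim to the Ginzburg--Bocklandt--Le Bruyn theorem (already cited in the paper) by evaluating the trace map $\wh\tr$ on a carefully chosen representation.

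First I would justify the vector-space isomorphism $\sL Q/\cx\pi_2\simeq \sA_1/[\sA_1,\sA_1]$ combinatorially. Since vertex $2$ of $Q$ carries no loop, every closed path of positive length based at vertex $2$ admits a cyclic rotation landing at vertex $1$, and every closed path at vertex $1$ decomposes uniquely into the letters $a,a^\ast$ and the two-step excursions $-xx^\ast=e_{11}$, $-xy=e_{12}$, $y^\ast x^\ast=e_{21}$, $y^\ast y=e_{22}$. Conversely, two words in these macro-generators that are cyclically equivalent in $\cx\ol Q$ are already cyclically equivalent in $\sA_1$: a cyclic shift of a vertex-$1$ path that stays at vertex~$1$ must jump over a whole two-step excursion or a loop, corresponding to a shift by one macro-letter. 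The only cyclic class unaccounted for is $[\pi_2]$, and $\pi_2$ is central in $\sL Q$ because each cyclic derivative $\partial\pi_2/\partial z$ vanishes, so the necklace bracket descends to $\sL Q/\cx\pi_2$. The formulas \eqref{e-e} on the six generators then follow from a short direct computation with \eqref{Q-bracket}, already sketched in the text.

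The heart of the argument is to upgrade ``agreement on generators'' to ``Lie algebra homomorphism on the whole algebra''. For this I would specialise the trace map $\wh\tr$ to the representation $V=(\cx,\cx^2)$, i.e.\ dimension $1$ at the loop vertex and $2$ at the other. The loops $a,a^\ast$ evaluate to themselves, while the $e_{ij}$ evaluate to the four scalar pairings between the $\cx^2$-vectors $X^\ast,Y$ and the $(\cx^2)^\ast$-covectors $X,Y^\ast$. By the first fundamental theorem of invariant theory for $GL(2)$ acting on two copies each of $\cx^2$ and $(\cx^2)^\ast$ these six scalars generate $\cx[\sR(\ol Q,V)]^{GL(V)}$; the dimension count $\dim\sR(\ol Q,V)-\dim PGL(V)=10-4=6$ forces algebraic independence, so $\cx[\sR(\ol Q,V)]^{GL(V)}\simeq \cx[a,a^\ast,e_{ij}]$ as an algebra, and a direct Poisson computation on $T^\ast\sR(Q,V)$ (parallel to verifying \eqref{e-e}) identifies the induced bracket with the standard Poisson structure of $\cx^2\times\gl(2,\cx)$.

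To conclude: because vertex $1$ has dimension one in $V$, any word in $\sA_1$ evaluates to an ordinary commutative product of scalars, so $\wh\tr$ restricted to $\sA_1/[\sA_1,\sA_1]$ is literally the abelianisation map into $\cx[a,a^\ast,e_{ij}]$. Since $\wh\tr\colon\sL Q\to\cx[\sR(\ol Q,V)]^{GL(V)}$ is a Lie algebra homomorphism by the Ginzburg--Bocklandt--Le Bruyn result, so is its restriction, and it lands in the standard Poisson algebra of $\cx^2\times\gl(2,\cx)$; surjectivity is automatic. The main obstacle I anticipate is the bookkeeping in the second step: identifying the invariant ring with the polynomial ring on six generators (no relations from the first/second fundamental theorems in this small rank) and matching the Poisson structures term by term. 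The combinatorial matching in the first step is elementary but also warrants some care, since one must check that $\sA_1$-cyclic equivalence coincides with $\cx\ol Q$-cyclic equivalence on closed paths at vertex $1$.
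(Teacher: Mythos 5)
Your route is sound and genuinely different from the paper's. The paper's (very terse) argument is purely formal: having identified $\sL Q/\cx\pi_2$ with $\sA_1/[\sA_1,\sA_1]$ for the free algebra $\sA_1=\cx\langle a,a^\ast,e_{11},e_{12},e_{21},e_{22}\rangle$ and computed the brackets of the six generators \eqref{e-e}, it invokes the derivation-like shape of the necklace bracket \eqref{Q-bracket} to conclude that the abelianisation is a Lie map onto the polynomial algebra carrying the Lie--Poisson structure determined by \eqref{e-e}, i.e.\ onto $\cx\bigl[\cx^2\times\gl(2,\cx)\bigr]$. You instead specialise the trace map $\wh\tr$ to the representation $V=(\cx,\cx^2)$, so that every vertex-$1$ necklace evaluates to a commutative product of the six scalar invariants, and you import the homomorphism property of $\wh\tr$ from Ginzburg/Bocklandt--Le Bruyn together with classical invariant theory (or Le Bruyn--Procesi plus the transcendence-degree count, which indeed gives algebraic independence of the six generators) to identify the image with the six-variable polynomial ring and its reduced Poisson bracket. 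What your approach buys is that the ``generators $\Rightarrow$ whole algebra'' step is outsourced to a cited theorem, and it exhibits $\cx^2\times\gl(2,\cx)$ itself as the $k=1$ member of the family of reductions, which is conceptually pleasant; the cost is the invariant-theoretic bookkeeping and the fact that you still have to compute the brackets of the six invariant functions on $T^\ast\sR(Q,V)$ (the evaluated form of \eqref{e-e}), so you do not entirely avoid the paper's generator computation.

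One step needs to be made explicit: ``since $\wh\tr$ is a Lie algebra homomorphism, so is its restriction.'' The restriction of a Lie homomorphism to a linear subspace is only a Lie homomorphism if that subspace is a subalgebra, and here this matters because $\wh\tr(\pi_2)=\dim\cx^2=2\neq 0$, so $\wh\tr$ does \emph{not} factor through $\sL Q/\cx\pi_2$; centrality of $\pi_2$ alone (which you do prove) is not enough. You must check that the complement of $\cx\pi_2$ spanned by $\pi_1$ and the vertex-$1$ necklaces is closed under the necklace bracket, and that its bracket computes the quotient bracket -- equivalently, that $\pi_2$ never occurs in $\{f,g\}$ for $f,g\in\sA_1$. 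This is true and quick: the bracket of necklaces of lengths $d_f,d_g$ has length $d_f+d_g-2$, so a constant can only arise from $f,g\in\Span(a,a^\ast)$, and $\{a,a^\ast\}$ is the trivial path $\pi_1$ at vertex $1$ (with the localised convention for cyclic derivatives), not $\pi_1+\pi_2$. With that sentence added, your argument closes; note also that the same convention is forced by consistency with $\wh\tr$, since $\{\wh\tr(a),\wh\tr(a^\ast)\}=1=\wh\tr(\pi_1)$ while $\wh\tr(\pi_1+\pi_2)=3$.
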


We also have
\begin{proposition} The algebra $\cx Q/[\cx Q,\cx Q]$ is a maximal commutative subalgebra of $\sL Q$.\hfill $\Box$\label{lie-centr}\end{proposition}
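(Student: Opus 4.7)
For the first half, $\cx Q/[\cx Q,\cx Q]$ is abelian by direct inspection of \eqref{Q-bracket}: for $f,g\in \cx Q$ no starred arrow appears in either, so every $\partial_{z^\ast}f$ and $\partial_{z^\ast}g$ vanishes and each summand of the bracket is zero.

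For maximality, my plan is to grade $\sL Q=\bigoplus_{d\ge 0}(\sL Q)_d$ by the number of starred arrows appearing in a cyclic word. Because $\partial_z$ preserves and $\partial_{z^\ast}$ lowers this degree by one, \eqref{Q-bracket} has degree $-1$; this both re-proves the abelianity of $(\sL Q)_0=\cx Q/[\cx Q,\cx Q]$ and shows that the centraliser of $(\sL Q)_0$ is graded. It therefore suffices to prove that any homogeneous $h\in (\sL Q)_d$ with $d\ge 1$ satisfying $\{h,(\sL Q)_0\}=0$ must vanish. I would treat the case $d=1$ first: every cyclic word with exactly one starred arrow has a canonical presentation $[z^\ast W]_{\mathrm{cyc}}$ with $z^\ast\in\{a^\ast,x^\ast,y^\ast\}$ and $W$ a path in $Q$ whose endpoints match $z^\ast$, giving a vector-space isomorphism
\begin{equation*}
(\sL Q)_1\;\simeq\;\pi_1(\cx Q)\pi_1\oplus \pi_1(\cx Q)\pi_2\oplus \pi_2(\cx Q)\pi_1,\qquad h\longmapsto (\partial_{a^\ast}h,\partial_{x^\ast}h,\partial_{y^\ast}h).
\end{equation*}
For $f\in (\sL Q)_0$ the centralising equation reduces to $\sum_{z\in Q}(\partial_{z^\ast}h)(\partial_zf)\equiv 0\bmod[\cx\ol Q,\cx\ol Q]$ and splits naturally into its loop-at-$1$ and loop-at-$2$ components, each of which must vanish for every $f$. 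I would then probe with a rich supply of test cyclic words (for example $a^m$, $(xy)^n$, mixed words $a^m(xy)^n$, and closed walks of the form $yPx$ with $P$ a loop at vertex $1$) and combine this with the surjectivity of each $\partial_z:(\sL Q)_0\to \cx Q$ onto the appropriate path space to pin down and kill each $\partial_{z^\ast}h$ in turn.

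The main obstacle is the combinatorial bookkeeping of cyclic equivalences. In particular the identity $[z\,\partial_zf]_{\mathrm{cyc}}=N_z(f)\,[f]_{\mathrm{cyc}}$ (with $N_z(f)$ counting occurrences of $z$ in $f$), together with the closed-walk balance $N_x(f)=N_y(f)$ that holds on all of $(\sL Q)_0$, yields \emph{forced} linear relations among the three components of the centralising equation which must be carefully isolated before one can conclude vanishing of the individual $\partial_{z^\ast}h$. Once the $d=1$ case is settled, the higher-weight case is an induction on $d$: for $h\in (\sL Q)_d$ centralising $(\sL Q)_0$, a variant of the same analysis shows that each $\partial_{z^\ast}h\in (\sL Q)_{d-1}$ itself centralises $(\sL Q)_0$; by the inductive hypothesis it then lies in $(\sL Q)_0$, and a final vertex/endpoint accounting rules out any surviving nonzero $h$.
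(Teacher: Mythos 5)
The commutativity half is fine: for $f,g\in\cx Q/[\cx Q,\cx Q]$ no starred arrow occurs, so every term of \eqref{Q-bracket} vanishes; this is evidently all the paper has in mind for that half (it states the proposition with no written proof). Your grading of $\sL Q$ by the number of starred letters is also sound: the bracket is indeed homogeneous of degree $-1$, so the centraliser of $(\sL Q)_0$ is graded and maximality reduces to killing homogeneous centralising elements of degree $d\geq 1$. The parametrisation of $(\sL Q)_1$ by $(\partial_{a^\ast}h,\partial_{x^\ast}h,\partial_{y^\ast}h)$ is likewise correct.

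But maximality is the entire content of the proposition, and at exactly that point your text stops being a proof and becomes a programme. For $d=1$ you say you ``would probe with a rich supply of test cyclic words \dots to pin down and kill each $\partial_{z^\ast}h$ in turn'' and yourself flag that the forced cyclic identities ``must be carefully isolated before one can conclude'': the conclusion is never reached. The difficulty is real, because for unstarred $f$ the centralising condition is the single mixed relation $\sum_{z\in\{a,x,y\}}\partial_{z^\ast}h\,\partial_z f\equiv 0 \bmod [\cx\ol Q,\cx\ol Q]$, and you give no mechanism for separating the three components before trying to show each vanishes. The inductive step for $d\geq 2$ is worse: you assert that ``each $\partial_{z^\ast}h\in(\sL Q)_{d-1}$ itself centralises $(\sL Q)_0$'', but $\partial_{z^\ast}h$ lies in the path algebra $\cx\ol Q$, not in $\sL Q$, so the statement does not typecheck as written, and even after projecting to necklaces no argument is offered for why centralisation of $h$ should pass to its partial derivatives (again the obstruction is that the three derivatives enter the bracket only through one mixed sum). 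Finally, ``a final vertex/endpoint accounting rules out any surviving nonzero $h$'' is not an argument. So the skeleton (star-degree grading, reduction to homogeneous elements, coordinates on $(\sL Q)_1$) is a reasonable way to organise a proof, but the decisive combinatorial step is missing at both the base case and the induction, leaving a genuine gap.
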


\section{The group of symplectic automorphisms of $\cx\ol Q$}
We consider the group $\Aut_R\cx \ol Q$ of $R$-automorphisms of $\cx \ol Q$. Just as for the Calogero-Moser quiver, $\Aut_R\cx \ol Q$ is an algebraic ind-group. One considers a filtration of  $\cx\ol Q$ by degree (i.e. the length of paths) and one views $\Aut_R\cx \ol Q$ as a closed variety in $\bigl(\cx\ol Q\bigr)^{\oplus 12}$ via the map, which associates to $\phi\in \Aut_R\cx \ol Q$ the values of $\phi$ and of $\phi^{-1}$ on the generators $a,a^\ast,x,x^\ast,y,y^\ast$ (cf. \cite{Sh}). 
\par
For any element $p\in \cx \ol Q$, we consider its stabiliser
$$ \Aut_R\bigl(\cx \ol Q;p\bigr)=\left\{\phi\in \Aut_R\cx \ol Q;\enskip \phi(p)=p\right\}.$$
 It is a closed subgroup of $\Aut_R\cx \ol Q$, and an algebraic ind-group with the respect to the induced filtration. Of particular importance is the group $ \Aut_R\bigl(\cx \ol Q;c\bigr)$, where 
\begin{equation} c=[a,a^\ast]+ [x,x^\ast]+[y,y^\ast].\label{c1}\end{equation}
It is the group of automorphisms preserving the non-commutative symplectic form of $\cx\ol Q$, in the sense of \cite{Gin}. We shall refer to $ \Aut_R\bigl(\cx \ol Q;c\bigr)$ as the {\em group of symplectic automorphisms}.
Its Lie algebra is a subalgebra 
of the algebra of symplectic derivations of $\cx \ol Q$, i.e.  a subalgebra of $\sL Q/R$. We shall see that this  is a proper inclusion, just  as in the case of the Calogero-Moser quiver.
\par
The group $\sG=\Aut_R\cx \ol Q$ acts on  $\sR(\ol Q,V)$: for $g\in \sG$ and any arrow $v$ in $\ol Q$, evaluate the non-commutative polynomial $g(v)$ on $(A,B,X_1,Y_1,X_2,Y_2)$.
It follows that  $ \Aut_R\bigl(\cx \ol Q;c\bigr)$ preserves the moment map \eqref{C}  for the action of $PGL(V)$.
It is also clear that the action of $ \Aut_R\cx \ol Q$ commutes with the  action of $PGL(V)$, and, hence, $ \Aut_R\bigl(\cx \ol Q;c\bigr)$ acts on each quiver variety  $\sR_O(\ol Q,V)$, in particular on the instanton moduli space $N_{k,\tau}$, via symplectomorphisms. It is easy to see that this action is algebraic (see \cite{Ku} for more on algebraic ind-groups and their algebraic actions).

The element  $c=[a,a^\ast]+ [x,x^\ast]+[y,y^\ast]$ is the sum of 
\begin{equation} c_1=[a,a^\ast]+xx^\ast-y^\ast y\quad \text{and}\quad c_2=yy^\ast -x^\ast x,\label{c12}\end{equation} 
and any $R$-automorphism, which preserves $c$ must preserve both $c_1$ and $c_2$. Thus
$$\Aut_R\bigl(\cx \ol Q;c\bigr)=\Aut_R\bigl(\cx \ol Q;c_1\bigr)\cap \Aut_R\bigl(\cx \ol Q;c_2\bigr).$$

\subsection{$\Aut_R\cx \ol Q$ as a semi-direct product}
Any $R$-automorphism of $\cx \ol Q$ preserves the ideal $I$ generated by $x,x^\ast,y,y^\ast$, and hence induces an automorphism of
$$ \cx \ol Q/I\simeq \cx\langle a,a^\ast\rangle.$$
The induced map
$$ \Aut_R\cx \ol Q\longrightarrow  \Aut\cx\langle a,a^\ast\rangle$$
is a morphism of algebraic ind-groups. On the other hand, any automorphism of  $\cx\langle a,a^\ast\rangle$ extends to an automorphism of $\cx \ol Q$, acting as the identity on $x,x^\ast,y,y^\ast$. Thus, $\Aut_R\cx \ol Q$ is a split extension (i.e. a semi-direct product):
\begin{equation} 1\rightarrow \sK\rightarrow \Aut_R\cx \ol Q\rightleftarrows \Aut\cx\langle a,a^\ast\rangle\rightarrow 1\label{split}\end{equation}
of algebraic ind-groups. We can restrict this exact sequence to any subgroup $\Aut_R\bigl(\cx \ol Q;p\bigr)$; in particular we have:
\begin{equation} 1\rightarrow \sK_c\rightarrow \Aut_R\bigl(\cx \ol Q;c\bigr)\rightleftarrows \Aut\bigl(\cx\langle a,a^\ast\rangle ; [a,a^\ast]\bigr)\rightarrow 1.\label{split2}\end{equation}

\subsection{ The Lie algebra vs. symplectic derivations\label{GneqL}}

A consequence of  \eqref{split2} is that the Lie algebra of $ \Aut_R\bigl(\cx \ol Q;c\bigr)$ cannot be the full Lie algebra of symplectic derivations of $\cx \ol Q$ (i.e.  $\sL Q/\R$). This is known for the Calogero-Moser quiver, where it follows from the Czerniakiewicz-Makar-Limanov Theorem on the structure of $\Aut\cx\langle u,v\rangle$. This theorem (see, e.g., \cite[Theorem 6.10.5]{Cohn}) implies that any automorphism of $\cx\langle u,v\rangle$ takes $u$ and $v$ to palindromic words (i.e. unchanged, when written backwards). The same must be true for any derivation in $\Lie\bigl( \Aut\cx\langle u,v\rangle\bigr)$.
\par
Now, the exact sequence  \eqref{split2} induces an analogous exact sequence on the Lie algebras. Let $D$ be a symplectic (i.e. killing $[a,a^\ast]$) derivation of $\cx\langle a,a^\ast\rangle$, which is not in $\Lie\bigl( \Aut\cx\langle a,a^\ast\rangle\bigr)$. Extend $D$ to a symplectic derivation $\tilde{D}$ of $\cx \ol Q$ by setting $D(u)=0$ for $u=x,y,x^\ast,y^\ast$. The sequence  \eqref{split2} implies that $\tilde{D}$ cannot belong to the Lie algebra of $ \Aut_R\bigl(\cx \ol Q;c\bigr)$ (in fact, using \eqref{split} instead, not even to $\Lie\bigl(\Aut_R\cx \ol Q\bigr)$).

\subsection{More on  $\Aut_R\bigl(\cx \ol Q;c_2\bigr)$ } Recall the subalgebra $\sA_1\subset \cx \ol Q$, which is isomorphic to the free $\cx$-algebra on $6$ letters  $a,a^\ast,e_{11},e_{12},e_{21},e_{22}$, where the $e_{ij}$ are defined by \eqref{e-s}. We wish, analogous to what we did for the necklace algebra, to describe symplectic automorphisms of $\cx \ol Q$ as automorphisms of $\sA_1$. We can actually do this for the bigger group  $\Aut_R\bigl(\cx \ol Q;c_2\bigr)$.

It is clear that any $R$-automorphism of $\cx \ol Q$ preserves $\sA_1$ and so we have a homomorphism
\begin{equation} \Aut_R\bigl(\cx \ol Q\bigr)\rightarrow \Aut_\cx(\sA_1).\label{Q-A}\end{equation}
For a $\psi \in  \Aut_R\bigl(\cx \ol Q\bigr)$ and an $u\in \cx \ol Q$,  we shall write $u^\psi$ for $\psi(u)$, and $E^\psi$ for $\bigl[e_{ij}^{\psi}\bigr]$.
We have:
\begin{proposition} The kernel of the restriction of the homomorphism \eqref{Q-A} to $\Aut_R\bigl(\cx \ol Q;c_2\bigr)$   consists of automorphisms $x\mapsto \lambda x, y^\ast\mapsto \lambda y^\ast,  x^\ast\mapsto \lambda^{-1} x^\ast, y\mapsto \lambda^{-1} y$, $\lambda\in \cx^\ast$. The image
 consists of automorphisms $\phi$ of  $\sA_1$, such that $E^\phi= M_\phi E M_\phi^{-1}$, for some $M_\phi\in GL_2(\sA_1)$. \label{Gc2}\end{proposition}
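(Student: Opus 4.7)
The plan is to encode each $\psi\in\Aut_R(\cx\ol{Q};c_2)$ by a single invertible matrix $M_\phi\in GL_2(\sA_1)$, and then read off both statements from an analysis of the centraliser of $E$ in $GL_2(\sA_1)$. Set $\phi=\psi|_{\sA_1}$. Using the decompositions $\pi_1\cx\ol{Q}\pi_2=\sA_1 x\oplus\sA_1 y^\ast$ (as a free left $\sA_1$-module) and $\pi_2\cx\ol{Q}\pi_1=x^\ast\sA_1\oplus y\sA_1$ (as a free right $\sA_1$-module), the coefficients of $\psi(x),\psi(y^\ast)$ in the first basis and of $\psi(x^\ast),\psi(y)$ in the second package into matrices $M,N\in M_2(\sA_1)$; since $\psi$ restricts to a bijection on each bimodule, both $M$ and $N$ lie in $GL_2(\sA_1)$.

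Applying $\psi$ entry-by-entry to the rank-one factorisation $E=(-x,y^\ast)^T(x^\ast,y)$ and collecting terms produces a matrix identity of the form $\widehat{M}\cdot E\cdot N=E^\phi$, where $\widehat{M}$ is the sign twist of $M$ by $J=\diag(-1,1)$. The condition $\psi(c_2)=c_2$, expanded using the analogous decomposition $\sA_2=\cx\oplus x^\ast\sA_1 x\oplus x^\ast\sA_1 y^\ast\oplus y\sA_1 x\oplus y\sA_1 y^\ast$ of $\pi_2\cx\ol Q\pi_2$, reduces to the single equation $N\widehat{M}=I$; setting $M_\phi:=\widehat{M}$ one obtains $N=M_\phi^{-1}$ and hence $E^\phi=M_\phi E M_\phi^{-1}$, giving the ``$\subseteq$'' direction of the image statement. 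Conversely, given any $\phi\in\Aut\sA_1$ and $M_\phi\in GL_2(\sA_1)$ satisfying this identity, the same formulas define $\psi$ on the generators of $\cx\ol{Q}$, and the identity $E^\phi=M_\phi E M_\phi^{-1}$ is exactly what is needed to check that the resulting map extends to an $R$-algebra automorphism preserving $c_2$.

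For the kernel ($\phi=\mathrm{id}$) the condition becomes $[M_\phi,E]=0$, so the proposition reduces to showing that the centraliser of $E$ in $GL_2(\sA_1)$ equals $\cx^\ast\cdot I$. Using the $e$-grading on $\sA_1$ in which $a,a^\ast$ have degree $0$ and each $e_{ij}$ has degree $1$, the lowest-order piece of $[M_\phi,E]=0$ forces the degree-zero component of $M_\phi$ to be a scalar multiple of the identity, and invertibility rules out this scalar being zero, since the units of the free algebra $\sA_1$ are precisely $\cx^\ast$. The main obstacle will be excluding nonzero higher-$e$-degree contributions; I would argue that any higher-degree perturbation of $\lambda I$ commuting with $E$ would have to be nilpotent in order for the Neumann series for $(\lambda I+\text{perturbation})^{-1}$ to terminate inside $M_2(\sA_1)$, whereas a careful analysis in the free algebra shows that no nonzero element of $M_2(\sA_1)$ commuting with $E$ is nilpotent ($E$ itself being manifestly non-nilpotent). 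Hence $M_\phi=\lambda I$ with $\lambda\in\cx^\ast$, and the associated $\psi$ is precisely the scaling $x\mapsto\lambda x$, $y^\ast\mapsto\lambda y^\ast$, $x^\ast\mapsto\lambda^{-1}x^\ast$, $y\mapsto\lambda^{-1}y$.
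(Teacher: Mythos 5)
Your first half is essentially the paper's own argument and is fine: encoding $\psi$ by matrices $M,N$ with entries in $\sA_1$ via the free (bi)module decompositions of $\pi_1\cx\ol Q\pi_2$ and $\pi_2\cx\ol Q\pi_1$, deducing $N\widehat M=1$ from $\psi(c_2)=c_2$, and hence $E^\phi=M_\phi EM_\phi^{-1}$; your use of the decomposition of $\pi_2\cx\ol Q\pi_2$ by first and last arrows is a harmless variant of the paper's trick of multiplying the relation by $x$ on the left and $x^\ast$ on the right and invoking freeness of $\sA_1$. The reduction of the kernel statement to computing the centraliser of $E$ in $GL_2(\sA_1)$, and the degree-zero analysis showing that the $e$-degree-$0$ part of $M_\phi$ is $\lambda\cdot 1$ with $\lambda\neq 0$, are also correct (though the cleanest reason for $\lambda\neq 0$ is to apply the algebra homomorphism $\sA_1\to\cx\langle a,a^\ast\rangle$ killing the $e_{ij}$ to the identity $M_\phi M_\phi^{-1}=1$, rather than the remark about units of $\sA_1$).

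The genuine gap is in the centraliser computation itself, which is the real content of the proposition, and which you replace by two unproved claims. First, termination of the Neumann series does \emph{not} show that a positive-degree perturbation $M_+$ with $\lambda\cdot 1+M_+$ invertible must be nilpotent: since $M_+$ need not be homogeneous in the $e$-grading, the degree-$d$ component of the would-be inverse is $\lambda^{-1}\sum_{k\le d}(-\lambda^{-1})^k\bigl[(M_+)^k\bigr]_d$, and its vanishing for large $d$ permits cancellation between different powers $k$; the standard commutative argument ruling this out (the proof that a unit in a graded commutative ring has nilpotent positive part) uses commutativity to push powers of the top component past lower ones and does not transfer to $M_2(\sA_1)$. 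Second, the assertion that no nonzero element of $M_2(\sA_1)$ commuting with $E$ is nilpotent is not justified by the non-nilpotence of $E$ (non-nilpotent elements commute with many nilpotents in general), and proving it seems to require knowing the centraliser of $E$ in the first place — which is exactly what the paper establishes by a direct combinatorial analysis of supports: the lemma that if $\supp m_{11}$ contains a word of degree $n>0$ then it contains $\alpha(e_{11})^n$, the resulting decomposition $M=\sum_j\alpha_jE^j+Q$ with $Q_{11}=0$, the argument that $Q=0$, and finally the linear independence of the powers $E^s$ combined with invertibility to force $M=\alpha_0\cdot 1$. Some argument of this kind is indispensable; as written, your kernel computation asserts its two key steps rather than proving them.
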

\begin{remark} The map $\Aut_R\bigl(\cx \ol Q;c_2\bigr)\rightarrow GL_2(\sA_1)$ is not a homomorphism.\end{remark}
\begin{proof} Let $\psi\in \Aut_R\cx \ol Q$ preserve $c_2$. Since $\psi$ is an $R$-automorphism, we can write
$$ \begin{pmatrix} -x^\psi\\ (y^\psi)^\ast\end{pmatrix}=M_\psi \begin{pmatrix} -x \\ y^\ast\end{pmatrix},\quad  \begin{pmatrix} (x^\psi)^\ast& 
y^\psi\end{pmatrix}=
 \begin{pmatrix} x^\ast &
y\end{pmatrix}N_\psi,$$
where $M_\psi,N_\psi\in GL_2(\sA_1)$. The condition of preserving $c_2$ can now be rewritten as 
$$         \begin{pmatrix} x^\ast &
y\end{pmatrix}N_\psi           M_\psi\begin{pmatrix} -x \\ y^\ast\end{pmatrix}=c_2.$$
We claim that $N_\psi M_\psi=1$. Let us write $C=[c_{ij}]=N_\psi M_\psi-1$. The last equation can be rewritten as
$$ \begin{pmatrix} x^\ast &
y\end{pmatrix}C \begin{pmatrix} -x \\ y^\ast\end{pmatrix}=0,$$
i.e.:
$$-x^\ast c_{11} x +x^\ast c_{12}y^\ast-yc_{21}x+yc_{22}y^\ast=0.$$
Multiplying this by $x$ on the left and by $x^\ast$ on the right yields 
$$ e_{11}(c_{11}e_{11}+c_{12}e_{12})+e_{12}(c_{21}e_{11}+c_{22}e_{21})=0.$$ 
Therefore (as $\sA_1$ is a free algebra) $c_{11}e_{11}+c_{12}e_{12}=0$ and $c_{21}e_{11}+c_{22}e_{21}=0$, and, hence, $C=0$. Thus, $N_\psi=M_\psi^{-1}$ and equation \eqref{e-s} implies that the induced automorphism $\phi$ of $\sA_1$ transforms $E$ as in the statement, with $M_{\phi}=M_\psi$.
\par
 The kernel of the homomorphism $\psi\mapsto\phi$ consists of those $\psi$, for which $M_\psi$ commutes with $E$, and the proof will be complete once we prove that the only matrices in $ GL_2(\sA_1)$, which commute with $E$, are of the form  $\begin{pmatrix} \lambda & 0\\0 &\lambda\end{pmatrix}$ for some $\lambda\in \cx^\ast$.  
For an element $t$ of $\sA_1$, denote by $\supp t$ the set of all distinct nonzero monomials making up $t$. 
\begin{lemma} Let $M=[m_{ij}]$ is a $2\times 2$-matrix with entries in $\sA_1$ which commutes with $E$. Suppose that $\supp m_{11}$  contains a word $w$ of degree $n>0$. Then $\supp m_{11}$  contains $\alpha (e_{11})^n$ ( $\alpha \neq 0$).\end{lemma}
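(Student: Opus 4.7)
My plan is to derive the conclusion from just the $(1,1)$ and $(2,1)$ entries of $[M,E]=0$, exploiting that $\sA_1$ is free on six generators so that coefficient matching is unambiguous.

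Writing $c_N(v)$ for the coefficient of a monomial $v$ in $N\in\sA_1$, the $(1,1)$ entry reads $[m_{11},e_{11}]=e_{12}m_{21}-m_{12}e_{21}$. Matching the coefficient of the monomial $we_{11}$ on both sides (for $w$ of degree $\geq 1$) gives $c_{m_{11}}(w)$, minus $c_{m_{11}}(w_1e_{11})$ when $w=e_{11}w_1$, equal to $c_{m_{21}}(w_2e_{11})$ when $w=e_{12}w_2$, and to $0$ otherwise; a symmetric inspection of $e_{11}w$ gives the corresponding identity at the other end. From this I read off three facts: every $w\in\supp m_{11}$ of positive degree must begin with $e_{11}$ or $e_{12}$ (the ``starting constraint''); if $w=e_{11}w_1$ then $c_{m_{11}}(e_{11}w_1)=c_{m_{11}}(w_1e_{11})$ (``rotation''); if $w=e_{12}w_2$ does not begin with $e_{11}$, then $c_{m_{11}}(w)=c_{m_{21}}(w_2e_{11})$ (``bridge''). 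The same analysis applied to the $(2,1)$ entry $m_{21}e_{11}-e_{22}m_{21}=e_{21}m_{11}-m_{22}e_{21}$ yields the parallel facts: any $z\in\supp m_{21}$ of positive degree must begin with $e_{21}$ or $e_{22}$, and one has $c_{m_{21}}(e_{22}z_1)=c_{m_{21}}(z_1e_{11})$ as well as $c_{m_{21}}(e_{21}z_2)=c_{m_{11}}(z_2e_{11})$ whenever the decompositions are available.

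Given $w\in\supp m_{11}$ of degree $n$, I set up the following deterministic reduction. At each step one carries a word $z$ of length $n$ in some $\supp m_{ij}$, $(i,j)\in\{(1,1),(2,1)\}$, with $c_{m_{ij}}(z)=c_w\neq 0$; one starts at $z=w$, $(i,j)=(1,1)$. One applies the rotation identity repeatedly to cycle every initial $e_{11}$ of $z$ to the tail; if the resulting word is $e_{11}^n$ we stop. Otherwise, the starting constraint together with $c_{m_{ij}}(z)=c_w\neq 0$ forces the first letter of the rotated word to be $e_{12}$ in the $m_{11}$ case, or one of $e_{21},e_{22}$ in the $m_{21}$ case. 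In each of these three situations exactly one of the bridge or $e_{22}$-rotation identities applies and produces a new word of length $n$ and coefficient $c_w$ in some $\supp m_{i'j'}$, now containing one fewer letter from $\{e_{12},e_{21},e_{22}\}$ and one more $e_{11}$.

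The monovariant ``number of letters of $z$ distinct from $e_{11}$'' is non-negative and strictly decreases at each non-rotation step, so the procedure terminates after at most $n$ steps at $z=e_{11}^n$. The $m_{21}$ starting constraint now earns its keep: $e_{11}^n$ begins with $e_{11}$, which is neither $e_{21}$ nor $e_{22}$, so $c_{m_{21}}(e_{11}^n)=0$, contradicting $c_w\neq 0$. Hence the chain must terminate in $\supp m_{11}$, yielding $c_{m_{11}}(e_{11}^n)=c_w\neq 0$, which is the lemma with $\alpha=c_w$. The main (rather minor) obstacle is the initial coefficient bookkeeping in the free algebra $\sA_1$: one must verify that the contributions to a given monomial on the two sides of the $(1,1)$ and $(2,1)$ equations are disjoint, so that no hidden cancellation invalidates the rotation, bridge and starting-constraint identities above. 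Once that is settled the rest is the short ping-pong recursion between $m_{11}$ and $m_{21}$.
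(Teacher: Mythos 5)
Your coefficient identities are all correct: since every product in the $(1,1)$ and $(2,1)$ entries of $[M,E]=0$ is multiplication by a single generator of the free algebra $\sA_1$, each monomial has a unique factorisation and no hidden cancellation can occur, so the starting constraints, the $e_{11}$-rotation, the two bridges and the $e_{22}$-step all hold as exact equalities of single coefficients. The recursion is then sound: at every stage the current word has coefficient $c_w\neq 0$, so the relevant starting constraint forces its first letter (after rotating away leading $e_{11}$'s) to be $e_{12}$, resp. $e_{21}$ or $e_{22}$ — in particular letters $a,a^\ast$ can never reach the front, so the chain never gets stuck — your monovariant forces termination, and the $m_{21}$ starting constraint both keeps the chain moving in the $(2,1)$ slot and excludes terminating at $(e_{11})^n$ there. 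So the proof is correct, but the mechanism differs from the paper's. There the relation $ME^n=E^nM$ is used once: the word $w(e_{11})^n$ in the $(1,1)$-entry of $ME^n$ has a unique factorisation, so it must match a word $e_{1i_1}\cdots e_{i_{n-1}i_n}u$ of $(E^nM)_{11}$ with $u\in\supp m_{i_n1}$, a degree count gives $u=\alpha(e_{11})^n$, and the $(2,1)$-component of $ME=EM$ rules out $i_n=2$ — the same exclusion you invoke at the end. Your chain is essentially that one-step argument unrolled: composing your bridge and rotation identities along the chain amounts to multiplying by $E$ one letter at a time instead of by $E^n$ at once. What you gain is that only the first power of $E$ and elementary bookkeeping are needed, with the by-products made explicit (every positive-degree word of $\supp m_{11}$ is a word in the $e_{ij}$'s of path type beginning with $e_{11}$ or $e_{12}$, and the coefficient of $(e_{11})^n$ equals that of $w$); what the paper gains is brevity, the whole induction being absorbed into a single degree comparison.
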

\begin{proof}  Consider the $(11)$-component of the equation $ME^n=E^nM$. The word $w(e_{11})^n$ on the left-hand side must cancel a word of the form $e_{1i_1}\dots e_{i_{n-1}i_n}u$ on the right-hand side, where $u\in \supp m_{i_n1}$. Comparing the degrees, we obtain that $u=\alpha(e_{11})^n$. We need to show that $i_n=1$. Consider  the $(21)$-component of the equation $ME=EM$, i.e. $m_{21}e_{11}+m_{22}e_{21}=e_{21}m_{11}+e_{22}m_{21}$. From this,  it is clear that $\supp m_{21}$ cannot contain $(e_{11})^n$.\end{proof}
Let $[M,E]=0$ and let $\alpha (e_{11})^n\in \supp m_{11}$. The matrix $M^\prime=M-\alpha E^n$ also commutes with $E$, but the support of its $(11)$-entry $m_{11}^\prime$ does not contain any $\beta (e_{11})^n$.  The lemma implies that $\supp m_{11}^\prime$ does not contain any words of degree $n$. Doing this for every degree, we can write $M=P+Q$, where $P=\sum_{j=1}^k \alpha_j E^j$,  $QE^i=E^iQ$ for all $i$, and $Q_{11}=0$.  We claim that $Q=0$. Indeed, the $(11)$- and $(12)$-components of the equation $QE=EQ$ reduce to
$$ Q_{12}e_{21}=e_{12}Q_{21},\quad Q_{12}e_{22}=e_{12}Q_{22}+e_{11}Q_{12}.$$
The first equation implies that every word in $\supp Q_{12}$ begins with $e_{12}$, and, hence, the second equation implies that $e_{11}Q_{12}$ is divisible on the left by  $e_{12}$, which is possible only if $Q_{12}=0$. The two equations imply now  that  $Q_{21}=0$ and $Q_{22}=0$.
\par
Thus, we conclude that, if  $ME=EM$, then $M=\sum_{j=1}^k \alpha_j E^j$ for some scalars $\alpha_{j}$. Suppose now that $M$ is in addition
invertible. Then $M^{-1}$ also commutes with $E$, and, hence, $M^{-1}=\sum_{j=1}^l \beta_j E^j$ for some $\beta_{j}$. Since $E^s$, $s=1,2,\dots$,  are
linearly independent, $MM^{-1}=1$ implies that $k=l=0$. Therefore $M=\alpha_0\cdot 1$, for some $\alpha_0\neq 0$.
\end{proof}

\section{Triangular and tame automorphisms} 

\begin{definition} An automorphism $\phi\in \Aut_R\cx \ol Q$ is said to be  {\em triangular} (resp. {\em strictly triangular}), if $\phi\bigl(\cx Q\bigr)=\cx Q$ (resp. $\phi$ is identity on $\cx Q$).\end{definition}
We shall describe now the group of strictly triangular automorphisms preserving $c$. Let $F_2=\cx\langle a,b\rangle $ be the free algebra on two letters $a$ and $b=xy$. Write
\begin{equation} L_2=F_2/\bigl(\cx +[F_2,F_2]\bigr).\end{equation}
We view $L_2$ as an abelian group, with respect to addition.
Recall the formula \eqref{d/dw} giving us maps
$$ \frac{\partial}{\partial a},\frac{\partial}{\partial b}:L_2\rightarrow F_2.$$
We introduce a map
$$ \Lambda:L_2\rightarrow \Aut_R\cx\ol Q,$$
defined on generators by:
\begin{eqnarray*} \Lambda(f)(a,x,y) & = & (a,x, y)\\
  \Lambda(f)(a^\ast)&=&a^\ast+\frac{\partial f}{\partial a}\\
\Lambda(f)(x^\ast)&=&x^\ast+ y\frac{\partial f}{\partial b} \\
\Lambda(f)(y^\ast)&=&y^\ast+\frac{\partial f}{\partial b}x, \end{eqnarray*}
for every non-commutative polynomial $f(a,b)$, $b=xy$.

\begin{proposition} Every $\Lambda(f)$ preserves the commutator \eqref{c1}, and the image of $\Lambda$ coincides with the stabiliser of the triple $(a,x,y)$ in $\Aut_R\bigl(\cx \ol Q;c\bigr)$. \label{stabiliser}\end{proposition}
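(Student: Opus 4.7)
The plan is to establish the two containments separately.  For the inclusion ``image $\subseteq$ stabiliser'', the formulas defining $\Lambda(f)$ assemble into a derivation $\delta_f$ of $\cx\ol Q$ with $\delta_f(a)=\delta_f(x)=\delta_f(y)=0$; since the cyclic derivatives $\partial/\partial a$ and $\partial/\partial b$ vanish on constants and on commutators in $F_2$, the map factors through $L_2$.  Because $\delta_f(a^\ast),\delta_f(x^\ast),\delta_f(y^\ast)$ all lie in $\cx\langle a,x,y\rangle$---the subalgebra on which $\delta_f$ is identically zero---one has $\delta_f^2=0$, so $\Lambda(f)=1+\delta_f$ is an $R$-algebra automorphism with inverse $1-\delta_f=\Lambda(-f)$ fixing $(a,x,y)$.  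Preservation of $c$ then follows from
\begin{equation*}
\Lambda(f)(c)-c=[a,\partial f/\partial a]+[x,\,y\,\partial f/\partial b]+[y,\,(\partial f/\partial b)\,x];
\end{equation*}
the last two commutators telescope to $[xy,\partial f/\partial b]=[b,\partial f/\partial b]$ with $b:=xy$, and the identity $[a,\partial f/\partial a]+[b,\partial f/\partial b]=0$ in $F_2$ is immediate on a length-$n$ monomial $f$: expressing the two cyclic derivatives as sums of cyclic rotations, the commutators fold into a single telescoping sum over all $n$ positions and vanish.

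For the reverse inclusion, let $\psi$ fix $(a,x,y)$ and preserve $c$, and write $\psi(a^\ast)=a^\ast+\alpha$, $\psi(x^\ast)=x^\ast+\beta$, $\psi(y^\ast)=y^\ast+\gamma$, with $\alpha\in\sA_1$, $\beta\in\pi_2\cx\ol Q\pi_1$, and $\gamma\in\pi_1\cx\ol Q\pi_2$.  The condition $\psi(c)=c$ becomes $[a,\alpha]+[x,\beta]+[y,\gamma]=0$, and splitting this identity by the $(\pi_i,\pi_j)$-bigrading of $\cx\ol Q$ yields two non-trivial components.  The $(2,2)$-component reads $y\gamma=\beta x$; using the unique decompositions $\beta=y\beta_y+x^\ast\beta_{x^\ast}$ and $\gamma=\gamma_x x+\gamma_{y^\ast}y^\ast$ (with all factors in $\sA_1$) and matching the four monomial types in $\pi_2\cx\ol Q\pi_2$, one forces $\beta_{x^\ast}=\gamma_{y^\ast}=0$ and $\beta_y=\gamma_x=:\sigma$.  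Hence $\beta=y\sigma$, $\gamma=\sigma x$, and the $(1,1)$-component reduces to
\begin{equation*}
[a,\alpha]+[b,\sigma]=0
\end{equation*}
in the free algebra $\sA_1=\cx\langle a,a^\ast,b,e_{11},e_{21},e_{22}\rangle$, with $b=xy$.

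The main obstacle is a non-commutative Poincar\'e lemma: I need $f\in L_2$ with $\alpha=\partial f/\partial a$ and $\sigma=\partial f/\partial b$.  First, I show $\alpha,\sigma\in F_2=\cx\langle a,b\rangle$ by grading $\sA_1$ by the total degree in the extra generators $G=\{a^\ast,e_{11},e_{21},e_{22}\}$.  Since $[a,-]$ and $[b,-]$ preserve the $G$-grading, the equation splits componentwise; for $G$-degree $k\geq 1$, writing both sides as sums $u_0 g_{i_1}u_1\cdots g_{i_k}u_k$ with $u_j\in F_2$ and grouping by the skeleton $(i_1,\dots,i_k)$ and the inner data $(u_1,\dots,u_{k-1})$, the equation collapses to independent identities $[a,A]+[b,B]=0$ for $A,B\in F_2\otimes F_2$.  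Injectivity of the map $(A,B)\mapsto[a,A]+[b,B]$---a direct consequence of the standard Koszul bimodule resolution $0\to F_2\otimes V\otimes F_2\to F_2\otimes F_2\to F_2\to 0$ of the free algebra (with $V=\cx a\oplus\cx b$)---forces $A=B=0$, so $\alpha,\sigma\in F_2$.  The Poincar\'e lemma in $F_2$ itself then furnishes the potential: for $(\alpha,\sigma)$ homogeneous of total degree $n-1$, take $f=\tfrac{1}{n}(a\alpha+b\sigma)$, and the telescoping identity above shows that the cyclic derivatives of $f$ recover $\alpha$ and $\sigma$ modulo commutators.  Summing over degrees yields $f\in L_2$ with $\psi$ and $\Lambda(f)$ agreeing on all generators, so $\psi=\Lambda(f)$.
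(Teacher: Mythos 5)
Your proof is correct in substance and follows the paper's skeleton up to the key equation: you split $\psi(c)=c$ by the idempotents, solve the $(2,2)$-component to get $\beta=y\sigma$, $\gamma=\sigma x$ (the paper's $s=yu$, $t=vx$, $u=v$), and reduce to $[a,\alpha]+[b,\sigma]=0$ in $\sA_1$. From there the routes genuinely diverge. The paper analyses this equation combinatorially, via the support conditions \eqref{ad}, \eqref{da}: these first force $h,u\in\cx\langle a,b\rangle$ and then allow one to split any solution into minimal rotation-closed pieces, each visibly equal to $\Lambda(aw)$ or $\Lambda(bw)$ on the generators. You instead argue structurally: the passage to $F_2=\cx\langle a,b\rangle$ comes from the grading by degree in $\{a^\ast,e_{11},e_{21},e_{22}\}$, the core decomposition, and injectivity of the Koszul bimodule differential of the free algebra (note your commutator map uses the outer bimodule action, so strictly you conjugate by word reversal to match the standard resolution — harmless); then the noncommutative Poincar\'e lemma in $F_2$ is settled by the explicit Euler-type potential $f=\frac{1}{n}(a\alpha+b\sigma)$. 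Your version isolates a reusable exactness statement (it reproves the Remark following the proposition in the paper), while the paper's support argument is more elementary and self-contained; both are valid.

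Two points in your write-up need repair, though neither affects the outcome. First, $\delta_f^2\neq 0$ and $1+\delta_f$ is not an algebra map: e.g.\ $\delta_f^2(a^\ast a^\ast)=2(\partial f/\partial a)^2$. The correct statement is that $\delta_f$ is locally nilpotent and $\Lambda(f)=\exp(\delta_f)$, or, more simply, that $\Lambda(-f)\Lambda(f)$ fixes every generator because the correction terms lie in the subalgebra generated by $a,x,y$, which $\Lambda(\pm f)$ fixes — an observation you already make, so this is a one-line fix. Second, ``recover $\alpha$ and $\sigma$ modulo commutators'' is not sufficient: to conclude $\psi=\Lambda(f)$ you need $\partial f/\partial a=\alpha$ and $\partial f/\partial b=\sigma$ exactly in $F_2$. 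The exact equalities do hold: the relation $[a,\alpha]+[b,\sigma]=0$ says precisely that the coefficient function $W\mapsto e(W)$ of $a\alpha+b\sigma$ on words of length $n$ (namely $e(aw)=\operatorname{coeff}_w(\alpha)$, $e(bw)=\operatorname{coeff}_w(\sigma)$) is invariant under cyclic rotation, and each marked necklace then contributes exactly $n$ rotations to the cyclic derivative, so the coefficient of $v$ in $\partial f/\partial a$ is $e(av)=\operatorname{coeff}_v(\alpha)$ on the nose. With these two adjustments your argument is complete.
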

\begin{remark} According to Proposition \ref{Gc2}, $\Lambda(f)$ induces an automorphism of $A_1\simeq 
\cx\langle a,a^\ast,e_{11},e_{12},e_{21},e_{22}\rangle$, such that the action on the $e_{ij}$ is given by conjugating the matrix $E=[e_{ij}]$ by an $M\in GL_2(\sA_1)$. This matrix for $\Lambda(f)$ is $$M=\begin{pmatrix} 1 & 0\\ -\frac{\partial f}{\partial b} & 1\end{pmatrix}.$$
\end{remark}
\begin{proof} Let $\phi\in \sG_c$ with $\phi (a,x,y)=(a,x,y)$. Put $\phi(a^\ast,x^\ast,y^\ast)=(a^\ast+h,x^\ast+s,y^\ast+t)$, where the endpoints of $h,s,t$ are the same as for $a^\ast,x^\ast,y^\ast$. The condition of preserving $c$ can be rewritten as:
\begin{equation} [a,h]+[x,s]+[y,t]=0,\label{hst}  \end{equation}
which is decomposed, using the idempotents, as:
 \begin{equation}[a,h]+xs-ty=0,\quad yt-sx=0.\label{hst2}  \end{equation}
As in the previous section, let $\sA_1\simeq \cx\langle a,a^\ast,e_{11},e_{12},e_{21},e_{22}\rangle$ be the subalgebra of $\cx \ol Q$ generated by all paths beginning and ending at the vertex $1$ (the $e_{ij}$-s are given by \eqref{e-s}).
The second equation in \eqref{hst2} implies that $s,t$ can be written as $s=yu$ and $t=vx$, with $u,v\in A_1$, and, hence $u=v$.  The first equation becomes
$[a,h]+[xy,u]=0$. Thus, the solutions to \eqref{hst} are in $1-1$ correspondence with elements $h,u\in A_1$, which satisfy
\begin{equation} [a,h]-[e_{12},u]=0.\label{free}\end{equation}
 Again, denote by $\supp t$, $t\in \sA_1$,  the set of all distinct nonzero monomials making up $t$. Let us also write $b=-e_{12}=xy$, so that  \eqref{free} becomes $[a,h]+[b,u]=0$. It is easy to see that this equation is equivalent to the following conditions, for any monomial $m$:
\begin{equation*} f\in\{a,a^\ast,e_{ij}; i,j=1,2\}\enskip \&\enskip \{fm,mf\}\cap \bigl(\supp h\cup \supp u\bigr)\neq \emptyset\Rightarrow \text{$f=a$ or $f=e_{12}$},\end{equation*}
\begin{equation} 
ma\in \supp h \Leftrightarrow am\in \supp h,\quad mb\in \supp u \Leftrightarrow bm\in \supp u,\label{ad}\end{equation}
\begin{equation} 
mb\in \supp h \Leftrightarrow am\in \supp u,\quad bm\in \supp h \Leftrightarrow ma\in \supp u.\label{da}\end{equation}
In particular, $h,u\in \cx\langle a,b\rangle$.
Let $w=\lambda a^{i_1}b^{j_1}\dots a^{i_n}b^{j_n}$, $\lambda\neq 0$, be any monomial in $S=\supp h\cup \supp u$, and consider the minimal subset $S_w$ of $S$, containing $w$, and invariant under the operations given in \eqref{ad} and \eqref{da}. Let $h_w$ be the sum of all monomials in $S_w\cap \supp h$ and $u_w$ the sum of all monomials in $S_w\cap \supp u$.
Then $h_w,u_w$ is a solution of \eqref{free}, and to finish the proof observe that the invariance under \eqref{ad} and \eqref{da} is equivalent to $(h_w,u_w)=\Lambda(aw)$, if $w\in \supp h$, and  $(h_w,u_w)=\Lambda(bw)$, if $w\in \supp u$.
\end{proof}

\begin{remark} The above proof implies the following: two elements $p,q\in \cx\langle a,b\rangle$ satisfy $[a,p]+[b,q]=0$ if and only if there exists an $f\in F_2/[F_2,F_2]$, such that $p=\frac{\partial f}{\partial a}$ and $q=\frac{\partial f}{\partial b}$ (this also follows from the fact that $[a,p]+[b,q]=0$ is equivalent to $a\mapsto -q, b\mapsto p$ being a symplectic derivation of $\cx\langle a,b\rangle$). Thus, the group of strictly triangular automorphisms of $\cx \ol Q_8$, where $Q_8$ is the quiver having one vertex and two loops, is isomorphic to the group of strictly triangular automorphisms of $\cx \ol Q$.
\label{F_2}\end{remark}

\begin{remark} It is easy to see that the symplectomorphisms \eqref{group} of the instanton moduli spaces $N_{k,\tau}$ are induced by strictly triangular automorphisms in $\Aut_R\bigl(\cx \ol Q;c\bigr)$.\end{remark}

There is a second obvious subgroup of $\Aut_R\bigl(\cx \ol Q;c\bigr)$: the group $\text{\rm Aff}_c$ consisting of the affine transformations of $\Span (a,a^\ast,x,y,x^\ast,y^\ast)$ preserving $c$. Since these must be $R$-homomorphisms, it follows that 
\begin{equation} \text{\rm Aff}_c=ASL(2,\cx)\times GL(2,\cx),\label{aff}\end{equation} 
where the group $ASL(2,\cx)$ of unimodular affine transformations of $\cx^2$ acts in the usual way on $\Span (a,a^\ast)$, while $T\in GL(2,\cx)$ acts only on the $x,y,x^\ast,y^\ast$, via:
\begin{equation} \begin{pmatrix} -x\\ y^\ast\end{pmatrix}\mapsto T \begin{pmatrix} -x \\ y^\ast\end{pmatrix},\quad  \begin{pmatrix} x^\ast & 
y\end{pmatrix}\mapsto 
 \begin{pmatrix} x^\ast &
y\end{pmatrix}T^{-1}.\label{affine}\end{equation}

Let $\TAut_R\bigl(\cx \ol Q;c\bigr)$ be the subgroup of $\Aut_R\bigl(\cx \ol Q;c\bigr)$ generated by strictly triangular automorphisms  and by $\text{\rm Aff}_c$. It is easy to see that any triangular automorphism is in  $\TAut_R\bigl(\cx \ol Q;c\bigr)$. We call  $\TAut_R\bigl(\cx \ol Q;c\bigr)$ {\em the group of tame symplectomorphisms}. We do not know whether  every symplectic automorphism of  $\cx \ol Q$ is tame.

\section{Transitivity of $\Aut_R\bigl(\cx \ol Q;c\bigr)$ on $N_{k,\tau}$}

We shall now prove
\begin{theorem} The group $\TAut_R\bigl(\cx \ol Q;c\bigr)$ acts transitively on $N_{k,\tau}$, if $\tau\neq 0$.\label{transitive}\end{theorem}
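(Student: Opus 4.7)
The plan is to exploit two transverse integrable systems on $N_{k,\tau}$, each integrated by a maximal ``triangular'' subgroup of $\TAut_R\bigl(\cx \ol Q;c\bigr)$. Let $\sG_1\subset \TAut_R\bigl(\cx \ol Q;c\bigr)$ be the subgroup preserving $\cx Q$; by Proposition \ref{stabiliser} it contains all strictly triangular automorphisms $\Lambda(f)$, $f\in L_2$, and the choices $f=a^{p+1}/(p+1)$ and $f=b^{q+1}/(q+1)$ with $b=xy$ recover exactly the symplectomorphisms \eqref{group}, i.e.\ the Hamiltonian flows of $\Phi_{p+1}$ and $\wh\Phi_{q+1}$. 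Proposition \ref{trans} then says that the strictly triangular part of $\sG_1$ acts transitively on each fibre of $\Psi=(\Phi_1,\dots,\wh\Phi_{k+1})$ within the stratum $\wh A\in \wh{\fG}^1$. Conjugation by the symplectic involution $\sigma\in\text{\rm Aff}_c$ given by $(a,a^\ast)\mapsto (a^\ast,-a)$ and $(x,x^\ast,y,y^\ast)\mapsto (-y^\ast,y,-x^\ast,x)$ (a product of elements of $ASL(2,\cx)$ and $GL(2,\cx)$ preserving $c$, with $\sigma(\cx Q)=\cx Q^{\rm op}$) produces the dual subgroup $\sG_2=\sigma\sG_1\sigma^{-1}$ preserving $\cx Q^{\rm op}$, whose strictly triangular part integrates the dual system $\Psi^\prime=(\tr B^i,\tr \wh B^j)$ and acts transitively on its level sets within the $\sigma$-image of $\wh{\fG}^1$.

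Set $\sH=\langle \sG_1,\sG_2\rangle$. The first main step is to establish transitivity of $\sH$ on the open subset $W\subset N_{k,\tau}$ on which both $\wh A\in \wh{\fG}^1$ and the analogous dual condition for $\wh B$ hold. On a Zariski-open $W^0\subset W$, the $4k$ functions occurring in $\Psi$ and $\Psi^\prime$ are functionally independent: $\Psi$ depends only on the spectrum of $\wh A$ and $\Psi^\prime$ only on that of $\wh B$, and by Proposition \ref{symplectomorphism} together with its $\sigma$-dual these spectra furnish $4k$ algebraically independent coordinates on $W^0$. The corresponding Hamiltonian vector fields therefore span $T_p N_{k,\tau}$ at every $p\in W^0$, so the $\sH$-orbit of any $p\in W^0$ is open. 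Since $N_{k,\tau}$ is smooth and connected for $\tau\neq 0$, $W^0$ is itself connected, hence a single $\sH$-orbit; this orbit extends to all of $W$ by the transitivity of $\sG_1$ and $\sG_2$ along their respective Lagrangian foliations throughout $W$.

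The final step is to move any point of $N_{k,\tau}$ into $W$ by elements of $\text{\rm Aff}_c\subset \sH$. Given a representative $(A,B,i,j)$, the $SL(2,\cx)$-shear $(a,a^\ast)\mapsto(a+ta^\ast,a^\ast)$ yields the family $(A+tB,B,i,j)$. The discriminant of $\det(zI-(A+tB))$ as a polynomial in $t$ is not identically zero: for $k\geq 3$, the moment equation $[A,B]=ij+\tau\cdot 1$ combined with $\rank(ij)\leq 2$ rules out $[A,B]=0$, and a pencil argument then produces a generic $t$ for which $A+tB$ is regular semisimple (the small-$k$ cases being checked directly). A translation $a\mapsto a+c\pi_1$ then ensures that $A$ and $\wh A$ share no eigenvalue (Corollary \ref{rss}), putting the point in $\wh{\fG}^1$; applying $\sigma$ and repeating places the point into the dual stratum as well, hence into $W$.

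The principal obstacle is the pencil argument in this last step: to conclude from $[A,B]\neq 0$ (and the moment-map equation) that $A+tB$ is regular semisimple for generic $t$ requires excluding the degenerate configurations in which common invariant structure of $A$ and $B$ would keep $A+tB$ non-regular for all $t$. This is precisely where the hypothesis $\tau\neq 0$ is used crucially. A subsidiary difficulty is extending transitivity from $W^0$ to $W$: one expects $W\setminus W^0$ to have codimension $\geq 2$ in $W$, but a direct verification is delicate and may ultimately require invoking more general flows from $\sG_1$ and $\sG_2$ (arising from non-monomial $f\in L_2$) to fill the tangent space near the degeneracy locus.
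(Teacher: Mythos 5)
There is a genuine gap, and it sits exactly where you flag your ``principal obstacle'': the step moving an arbitrary point of $N_{k,\tau}$ into the good locus $W$ using only $\text{\rm Aff}_c$. The claim that $[A,B]\neq 0$ plus the moment equation forces $A+tB$ (or $cA+dB$) to be regular semisimple for generic $t$ is false. Take any Calogero--Moser point $(C,D,i^\prime,j^\prime)$ with $[C,D]-\tau\cdot 1_n=i^\prime j^\prime$ of rank one, and set $A=C\oplus C$, $B=D\oplus D$, $i=\begin{pmatrix} i^\prime & 0\\ 0 & i^\prime\end{pmatrix}$, $j=\begin{pmatrix} j^\prime & 0\\ 0 & j^\prime\end{pmatrix}$. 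This is a legitimate point of $N_{k,\tau}$ for $k=2n$, and every element of the pencil, indeed every expression $cA+dB+e\cdot 1$ and even $A+p(B)$ for any polynomial $p$, is of the form $X\oplus X$, hence never regular as a matrix, let alone regular semisimple. So no element of $ASL(2,\cx)$ (nor of the whole subgroup acting only through $a,a^\ast$) can bring such a point into $\wh{\fG}^{1}$, and your route into $W$ breaks down precisely on these configurations. These ``doubled'' points are not a marginal nuisance: they are the reason the paper's proof, after reducing via Lemma \ref{Omax}, Shiota's Lemma \ref{Shiota} and the Berest--Wilson result to the rank-one stratum $N_1$, must spend most of its effort on the rank-two orbits of $ij$ (the cases \eqref{albe1} with $\alpha_1=\alpha_2$ and \eqref{albe2}), using the genuinely nonlinear strictly triangular flows (the quadratic terms \eqref{ssss}, \eqref{sss}, which mix $i,j$ into the picture), invariant-subspace and spectral-curve arguments, and a final contradiction to show such obstructed points cannot actually resist being moved. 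Your proposal contains no substitute for this analysis.

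Two secondary points are also asserted rather than proved. First, the joint functional independence of the $4k$ spectral functions of $(A,\wh A)$ and $(B,\wh B)$ on some $W^0$ does not follow from Proposition \ref{symplectomorphism} and its $\sigma$-dual separately: each gives a coordinate system in which only half of these functions appear as coordinates (the complementary coordinates $\mu_i,\wh\mu_j$ are not spectra of $B,\wh B$), so independence of the union needs an argument. Second, the passage from transitivity on $W^0$ to transitivity on $W$ (and the implicit codimension claim) is not justified; Proposition \ref{trans} gives transitivity along $\Psi$-fibres only under the regular-semisimplicity hypotheses, which need not hold on all of $W\setminus W^0$. By contrast, the paper sidesteps both issues by aiming at the Calogero--Moser locus $M_{k,\tau}$ and quoting Berest--Wilson there, so that only the ``move into the good stratum'' problem remains --- which is the part your argument does not settle.
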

\begin{remark} The corresponding result for the Calogero-Moser quiver has been proved by Berest and Wilson \cite{BW}.\end{remark}
\begin{remark} In the case of the Calogero-Moser quiver $Q_{\rm CM}$, the action of $\Aut\bigl(\cx\langle a,a^\ast\rangle ; [a,a^\ast]\bigr)$ has an open orbit on every quiver variety $\sR_O(\ol Q_{\rm CM},V)$; see \cite{A}. This is {\em not} the case for $\Aut_R\bigl(\cx \ol Q;c\bigr)$.  Indeed,  Proposition \ref{Gc2} implies that the action on $\sR_O(\ol Q,V)$ preserves the conjugacy class of the $2k\times 2k$-matrix \eqref{ET} below, which has rank $l$, if $V=(\cx^k,\cx^l)$.\end{remark}

Before proving Theorem \ref{transitive}, let us show how Theorem 3 of the introduction follows from it. Let $\text{\rm SAff}_c= ASL(2,\cx)\times SL(2,\cx)$, and let $\text{\rm STAut}_R\bigl(\cx \ol Q;c\bigr)$ be generated by strictly triangular automorphisms  and by $\text{\rm SAff}_c$. We observe that $\text{\rm STAut}_R\bigl(\cx \ol Q;c\bigr)$ is also generated by strictly triangular automorphisms and by ``opposite" strictly triangular automorphisms, i.e. those $\phi$, which satisfy $\phi(a^\ast,x^\ast,y^\ast)=(a^\ast,x^\ast,y^\ast)$. Thus, the Lie algebra $\fL$ of $\text{\rm STAut}_R\bigl(\cx \ol Q;c\bigr)$ is generated by the two commutative subalgebras $\cx Q/[\cx Q,\cx Q]$ and $\cx Q^{\rm op}/[\cx Q^{\rm op},\cx Q^{\rm op}]$. 
We also observe that the action of the centre of $GL(2,\cx)$ is trivial on $N_{k,\tau}$, and, hence, the group $\TAut_R\bigl(\cx \ol Q;c\bigr)$ can be replaced by  $\text{\rm STAut}_R\bigl(\cx \ol Q;c\bigr)$ in the statement of  Theorem \ref{transitive}. Therefore, the symplectic manifold $N_{k,\tau}$ is a coadjoint orbit of $\text{\rm STAut}_R\bigl(\cx \ol Q;c\bigr)$, and, consequently, the algebra of polynomial functions on $N_{k,\tau}$ is a quotient of the Poisson algebra of polynomial functions on $\fL^\ast$, i.e. a quotient of the symmetric algebra $S\fL$. Therefore $\cx[N_{k,\tau}]$ is generated, as a Poisson algebra, by $S\bigl(\cx Q/[\cx Q,\cx Q]\bigr)$ and  $S\bigl(\cx Q^{\rm op}/[\cx Q^{\rm op},\cx Q^{\rm op}]\bigr)$, which is 
exactly the statement of Theorem 3.

\medskip 

The remainder of the section will be devoted to a proof of Theorem \ref{transitive}. 

We are going to use the representation \eqref{ij} of $\ol Q$, i.e.:
\begin{equation} a\mapsto A,\enskip a^\ast\mapsto B,\enskip x\mapsto - i_1, \enskip y\mapsto j_2,\enskip x^\ast\mapsto j_1, \enskip y^\ast\mapsto i_2.\label{WWW}\end{equation}
We also write $i=\begin{pmatrix}i_1 &  i_2\end{pmatrix}\in \Hom\bigl(\cx^{2},\cx^k\bigr)$ and $j=\begin{pmatrix}j_1\\ j_2\end{pmatrix}\in \Hom\bigl(\cx^{k},\cx^{2}\bigr)$. We observe that the matrix $E$ given by \eqref{e-s} maps to 
\begin{equation} \begin{pmatrix}i_1j_1 &  i_1j_2\\ i_2j_1 & i_2j_2\end{pmatrix}.\label{ET}\end{equation}

Inside $N_{k,\tau}$, we have the subset 
\begin{equation} M_{k,\tau}=\left\{[A,B,i,j]\in N_{k,\tau};\enskip i_2=0,j_2=0\right\}.\label{MM}\end{equation}
$M_{k,\tau}$ is isomorphic to the Calogero-Moser space \cite{W}. In particular, for $\tau\neq 0$, Berest and Wilson \cite{BW} have shown that the group $\Aut\bigl(\cx\langle a,a^\ast\rangle ; [a,a^\ast]\bigr)$ acts transitively on $M_{k,\tau}$.  
 It follows that to prove the theorem, it is sufficient to move any point of $N_{k,\tau}$ into $M_{k,\tau}$.
\par
We begin with
\begin{lemma} Let $\tau\neq 0$ and $m=[A,B,i,j]\in N_{k,\tau}$ be such that $A$ is regular semisimple. Then there exists a $g\in \TAut_R\bigl(\cx \ol Q;c\bigr)$ such that $gm\in M_{k,\tau}$.\label{Omax}\end{lemma}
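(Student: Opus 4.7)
The plan is to move $m=[A,B,i,j]$ into $M_{k,\tau}$ by successively killing $i_2$ and then $j_2$ through a sequence of affine and strictly triangular symplectomorphisms. First, conjugating by an element of $GL(k,\cx)$, I may assume $A=\diag(\lambda_1,\dots,\lambda_k)$ with distinct eigenvalues. Since $[A,B]_{pp}=0$ for diagonal $A$, the diagonal entries of the moment-map equation $[A,B]-ij=\tau\cdot 1$ give $(i_1)_p(j_1)_p+(i_2)_p(j_2)_p=-\tau$, so because $\tau\neq 0$ no row of the $k\times 2$ matrix $i=(i_1|i_2)$ can vanish. Each nonzero row defines a hyperplane in $\cx^2$; picking $v\in\cx^2$ outside the union of these $k$ hyperplanes and a matrix $T\in GL(2,\cx)$ with first column $v$, the $GL(2,\cx)$-factor of $\text{\rm Aff}_c$ from \eqref{affine} transforms the first column of $i$ into $iv$, which has all entries nonzero. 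Since $A$ is diagonal with distinct eigenvalues, this makes $i_1$ a cyclic vector for $A$.

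Next I kill $i_2$. Let $P$ be the unique polynomial of degree $<k$ with $P(A)i_1=i_2$, and apply the strictly triangular automorphism $\Lambda(P(a)b)$ from Proposition~\ref{stabiliser}; since $\partial(P(a)b)/\partial b=P(a)$, it sends $y^\ast\mapsto y^\ast+P(a)x$, hence $i_2\mapsto i_2-P(A)i_1=0$, while $A,i_1,j_2$ are preserved. The moment-map equation now forces $(j_1)_p=-\tau/(i_1)_p$ (nonzero) and $B_{pq}=(i_1)_p(j_1)_q/(\lambda_p-\lambda_q)$ for $p\neq q$, leaving the diagonal of $B$ as a free parameter. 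A further $\Lambda(R(a))$ with $R\in\cx[a]$ has $\partial R/\partial b=0$ and $\partial R/\partial a=R'(a)$, so it only shifts $B\mapsto B+R'(A)$ diagonally; by Lagrange interpolation I can set the diagonal of $B$ to any prescribed values without disturbing $A,i_1,j_1,j_2$ or the vanishing of $i_2$.

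Finally, I kill $j_2$ by an opposite strictly triangular automorphism, i.e.\ one fixing $a^\ast,x^\ast,y^\ast$. These lie in $\TAut_R(\cx\ol Q;c)$: the involution $\sigma\in\text{\rm Aff}_c$ sending $(a,a^\ast)\mapsto(a^\ast,-a)$ via its $ASL(2,\cx)$-factor and $(x,y,x^\ast,y^\ast)\mapsto(-y^\ast,-x^\ast,y,x)$ via its $GL(2,\cx)$-factor preserves $c$ and conjugates strictly triangular to opposite strictly triangular automorphisms. The opposite analogue of Proposition~\ref{stabiliser}, applied with $g=Q(a^\ast)b^\ast$ where $b^\ast=y^\ast x^\ast$, yields modifications $y\mapsto y+x^\ast Q(a^\ast)$ and $x\mapsto x+Q(a^\ast)y^\ast$. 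Since currently $b^\ast=i_2j_1=0$, every term of $\partial g/\partial a^\ast$ vanishes, so $A$ is unchanged, and the modification of $x$ evaluates to $-i_1+Q(B)i_2=-i_1$, so $i_1$ is unchanged; only $j_2\mapsto j_2+j_1Q(B)$ is non-trivial. Choosing $Q$ with $j_1Q(B)=-j_2$ sends $j_2$ to $0$, landing the point in $M_{k,\tau}$. The main obstacle is securing such a $Q$: it requires $j_1$ to be a cyclic covector for $B$, i.e.\ $\{j_1B^n\}_{n\geq 0}$ spans $(\cx^k)^\ast$. This is a Zariski-open condition on the diagonal of $B$, and since $j_1$ has all nonzero entries in the $A$-eigenbasis it is non-trivially satisfiable, so a suitable choice in the preceding $\Lambda(R(a))$-step completes the argument.
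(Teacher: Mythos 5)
Your argument is correct in outline, and its first half is exactly the paper's: diagonalise $A$, use the diagonal of the moment map equation together with the $GL(2,\cx)$-factor of $\text{\rm Aff}_c$ (acting by \eqref{affine}) to make all entries of $i_1$ nonzero, then kill $i_2$ by a strictly triangular automorphism $\Lambda(\pm p(a)b)$ acting as in \eqref{action}, with $p$ found by Lagrange interpolation. Where you genuinely diverge is the second half. The paper never touches $B$: after $i_2=0$ it observes that all entries of $j_1$ are forced to be nonzero, uses the $GL(2,\cx)$-action to swap the roles of the two framing components (sending $i\mapsto(0,i_1)$, $j\mapsto(j_2,j_1)$), and then applies the \emph{same} family \eqref{action} of $a$-triangular flows a second time; since the relevant covector now has all entries nonzero and $A$ is still diagonal regular semisimple, interpolation in $A$ alone finishes the proof. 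You instead conjugate by the involution $\sigma\in\text{\rm Aff}_c$ to produce opposite strictly triangular automorphisms $\Lambda$-type in $a^\ast$, which shift $j_2\mapsto j_2+j_1Q(B)$ while (because $b^\ast=y^\ast x^\ast$ evaluates to $i_2j_1=0$) leaving $A$, $i_1$, $i_2$, $j_1$, $B$ fixed; this is a legitimate and correctly computed alternative, but it forces you to require that $j_1$ be a cyclic covector for $B$, hence the extra preparatory step $\Lambda(R(a))$ shifting the diagonal of $B$ by $R'(A)$. The paper's swap trick buys exactly the avoidance of any condition on $B$.

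The one place where your write-up has a real gap is the final genericity claim. Zariski-openness of the cyclicity condition in the diagonal shift does not by itself give non-emptiness, and ``$j_1$ has all nonzero entries in the $A$-eigenbasis'' is not yet an argument, since cyclicity is required with respect to $B$, not $A$. The claim is true, but you must exhibit one good shift: for instance take $D=\diag(d_1,\dots,d_k)$ with distinct $d_p$ and replace $B$ by $B+tD$ (possible since $R'(A)$ realises any diagonal matrix); then
\begin{equation*}
\det\bigl[\,j_1(B+tD)^n\,\bigr]_{n=0,\dots,k-1}=t^{k(k-1)/2}\Bigl(\prod_{p}(j_1)_p\Bigr)\Bigl(\prod_{p<q}(d_q-d_p)\Bigr)+O\bigl(t^{k(k-1)/2-1}\bigr),
\end{equation*}
which is nonzero for large $t$ because every $(j_1)_p=-\tau/(i_1)_p\neq 0$. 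With this supplied (and modulo the harmless row/column transposition in your description of the $GL(2,\cx)$-step), your proof is complete; alternatively, adopting the paper's swap avoids the issue altogether.
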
 
\begin{proof} Let $p(a)$ be any polynomial in the variable $a$. We consider the strictly triangular automorphism $T_p=\Lambda(f)\in \Aut_R\cx \ol Q$ corresponding to $f(a,b)=-p(a)b$. In particular  $\frac{\partial f}{\partial b}=-p(a)$. It follows from the previous section that the action of $T_p$ on  $i_1,i_2,j_1,j_2$ is:
\begin{equation} \begin{pmatrix}i_1\\ i_2\end{pmatrix}\mapsto    \begin{pmatrix} 1 & 0\\ p(A) & 1\end{pmatrix} \begin{pmatrix}i_1\\ i_2\end{pmatrix},\quad \begin{pmatrix}j_1 &  j_2\end{pmatrix} \mapsto   \begin{pmatrix}j_1 &  j_2\end{pmatrix}\begin{pmatrix} 1 & 0\\-p(A) & 1\end{pmatrix}. \label{action}\end{equation}
We can assume that $A$ is diagonal, with distinct eigenvalues. Since $[A,B]-ij=\tau\cdot 1$ and $\tau\neq 0$, we can use the $GL(2,\cx)$-action on $i,j$ in order to guarantee that all entries of $i_1$ are nonzero. We can then find a polynomial $p(A)$ such that $i_2+p(A)i_1=0$. Thus, we can assume that $i_2=0$. Therefore $[A,B]=\tau\cdot 1+i_1j_1$, and, since $A$ is diagonal, all entries of $j_1$ are nonzero. We now use the $GL(2,\cx)$-action to send $i$ and $j$ to $(i_1^\prime,i_2^\prime)=(0,i_1)$ and $(j_1^\prime,j_2^\prime)=(j_2,j_1)$. The action \eqref{action} preserves the condition $i_1^\prime=0$, and, since all entries of $j_2^\prime$ are non-zero, we can find a polynomial $p(A)$, which sends $j_1^\prime$ to $0$.
\end{proof}

We now decompose $N_{k,\tau}$ as the union $N_{k,\tau}=\bigcup N_O$, where $O$ runs over all adjoint orbits of rank $\leq 2$ matrices and
\begin{equation} N_O=\bigl\{ (A,B,i,j); \enskip [A,B]-\tau\cdot 1=ij, \enskip ij\in O\bigr\}/GL(k,\cx).
\label{OO}\end{equation}
Thus, 
$O$ is an orbit of
\begin{equation} \diag(\alpha_1,\alpha_2,0,\dots,0),\label{albe1}\end{equation}
or of
 \begin{equation} E_{12}+\diag(\alpha,\alpha,0,\dots,0).\label{albe2}\end{equation}
In addition, $N_O$ is nonempty if and only if $\alpha_1+\alpha_2=-k\tau$ for \eqref{albe1}, and $2\alpha=-k\tau$ for \eqref{albe2}. Thus, the $\alpha$-s are determined by $ \tr(ij)^2=\tr(ji)^2$, and in particular, $\tr(ji)^2=k^2\tau^2$ implies that $O$ is the orbit of $\diag(-k\tau,0,0,\dots,0)$, i.e. $[A,B]-\tau \cdot 1$ has rank $1$. Denote this particular $N_O$ by $N_1$.  We observe that any point of $N_1$ can be moved into $M_{k,\tau}$. This follows immediately from Lemma \ref{Omax} and a lemma of Shiota \cite[Lemma 5.6]{W}, which we formulate as follows:
\begin{lemma}[Shiota] Let $C$ and $D$ be $n\times n$ matrices such that $[C,D]-\tau\cdot 1$ has rank $1$. If $\tau\neq 0$, then there exists a polynomial $p(t)=\sum_{r=0}^{n-1} p_rt^r$, such that $(p_0,\dots,p_{n-1})$ is arbitrarily close to $0\in \cx^n$ and $C+p(D)$ is regular semisimple.\hfill $\Box$\label{Shiota}\end{lemma}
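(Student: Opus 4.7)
The set $\sU = \{p \in \cx^n : C + p(D) \text{ is regular semisimple}\}$ is the complement of the zero locus of the discriminant polynomial $\Delta(p) := \mathrm{disc}_z\bigl(\det(zI - C - p(D))\bigr)$, hence Zariski open in $\cx^n$. Since any nonempty Zariski-open subset of the irreducible variety $\cx^n$ is both Zariski and analytically dense, the ``arbitrarily close to $0$'' conclusion will follow automatically once I produce a single $p$ for which $C + p(D)$ is regular semisimple. My plan is to argue by contradiction: assume $\Delta \equiv 0$ and derive a contradiction from the rank-one structure $[C,D] - \tau I = uv^T$, where necessarily $v^T u = -n\tau \neq 0$.

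A preliminary step I would establish is the irreducibility of the pair $(C,D)$: a common invariant subspace $W$ of dimension $d$ with $0 < d < n$ would satisfy $d\tau + \mathrm{tr}((uv^T)|_W) = 0$, and the rank-one structure of $uv^T$ combined with $\tau \neq 0$ forces $d \in \{0, n\}$ (by the argument-pattern appearing after equation \eqref{moment3}). Under the contradiction hypothesis, a standard Gauss-lemma argument in $\cx(p_0,\dots,p_{n-1})[z]$ then yields an algebraic function $\alpha(p)$ on a Zariski-dense open $\Omega \subset \cx^n$ such that $\alpha(p)$ is a multiple root of the characteristic polynomial $\chi_{C+p(D)}(z)$ for every $p \in \Omega$; consequently the generalized eigenspace $V(p) := \ker(C + p(D) - \alpha(p)I)^n$ has dimension $\geq 2$ throughout $\Omega$.

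The central computation exploits the fact that $[C + p(D), D] = [C,D]$ is $p$-independent. Writing $A := C + p(D) - \alpha(p)I$ and expanding $[A^n, D] = \sum_{k=0}^{n-1} A^k [A, D] A^{n-1-k}$ with $[A, D] = \tau I + uv^T$ gives, for $w \in V(p) = \ker A^n$,
$$A^n(Dw) = n\tau A^{n-1} w + \sum_{k=0}^{n-1} \bigl(v^T A^{n-1-k}w\bigr)\, A^k u.$$
If the right-hand side vanishes on $V(p)$ for generic $p$, then $V(p)$ is $D$-invariant and also $(C+p(D))$-invariant, hence $C$-invariant; irreducibility then forces $V(p) = \cx^n$, i.e.\ $C + p(D)$ is scalar plus nilpotent for every $p \in \Omega$. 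Expanding the trace identities $\mathrm{tr}\bigl((C+p(D))^k\bigr) = n\alpha(p)^k$ as polynomial equalities in $p$ then forces $D$ itself to be scalar plus nilpotent (via a Cauchy--Schwarz-type equality on power sums of its eigenvalues), after which trace identities of the form $\tau\,\mathrm{tr}(D^k) + v^T D^k u = 0$ for $k \geq 1$ combined with $\tau \neq 0$ yield a contradiction.

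The main obstacle is the complementary case, where $D$ does not preserve $V(p)$ for generic $p$; there the displayed formula forces a uniform positional constraint between the fixed vectors $u, v$ and the algebraically varying Jordan data of $C + p(D)$ at $\alpha(p)$. One promising route is induction on $n$: the spectral projection $P$ of $C$ onto a generalized eigenspace $V_{\lambda_0}$ of multiplicity $m < n$ satisfies $v^T P u = -m\tau \neq 0$ (since $\mathrm{tr}(P[C,D]P) = 0$), so the reduced nilpotent $N := (C - \lambda_0 I)|_{V_{\lambda_0}}$ together with the linear family of operators $\widetilde{p(D)} := P p(D) P|_{V_{\lambda_0}}$ inherits a rank-one-commutator structure at dimension $m$; combining the inductive hypothesis with first-order perturbation theory in a parameter $\epsilon$ should split the eigenvalues of $C + \epsilon q(D)$ near $\lambda_0$ and thereby yield the desired regular semisimple element. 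The delicate point is that $\widetilde{p(D)}$ is not in general of the form $q(\widetilde{D})$ for a single operator $\widetilde{D}$, so the inductive hypothesis must be applied to the linear family as a whole, which may require strengthening it to a statement about affine subspaces of $\End(V_{\lambda_0})$ meeting the regular semisimple locus.
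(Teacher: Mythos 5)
The paper itself offers no proof of this statement: it is Shiota's lemma, quoted verbatim from Wilson \cite[Lemma 5.6]{W} (hence the $\Box$ in the statement), so the only ``paper proof'' to compare against is that citation. The question is therefore whether your argument is complete on its own terms, and it is not.

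The framework is sound: regular semisimplicity of $C+p(D)$ is the non-vanishing of a polynomial (the discriminant) in $(p_0,\dots,p_{n-1})$, so it suffices to exhibit one good $p$; and the irreducibility of the pair $(C,D)$ follows correctly from $\tr\bigl([C,D]|_W\bigr)=0$ on a common invariant subspace $W$ together with $v^Tu=\tr([C,D])-n\tau=-n\tau\neq 0$. But both branches of your case analysis are open. In the branch where $V(p)$ is $D$-invariant you correctly conclude that $C+p(D)$ has a single eigenvalue for every $p$ and (via the power-sum identity) that $D$, and likewise $C$, is scalar plus nilpotent; however the announced contradiction from ``$\tau\,\tr(D^k)+v^TD^ku=0$'' does not materialise: these identities are nothing but $\tr\bigl([C,D]D^k\bigr)=0$, so they hold for \emph{every} pair satisfying the hypothesis of the lemma and cannot by themselves contradict anything --- writing $D=\mu I+N$ they only say $v^TN^ju=0$ for $j\geq 1$, which is perfectly consistent data, so this branch still needs a genuine argument. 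The complementary branch, which is where the real content of Shiota's lemma lies, is presented only as a plan (``one promising route is induction on $n$\dots''), and you yourself identify the obstruction: the compressed family $P\,p(D)\,P|_{V_{\lambda_0}}$ is not of the form $q(\widetilde D)$, so the inductive statement would have to be reformulated for affine families of operators, and no such statement is formulated or proved. As written this is a program rather than a proof; either carry out that strengthened induction in full, or do what the paper does and invoke \cite[Lemma 5.6]{W}.
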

Therefore, to prove the theorem, it is enough to show any point of $N_{k,\tau}$ can be moved, using $\TAut_R\bigl(\cx \ol Q;c\bigr)$ into $N_1$, i.e. $\tr(ji)^2$ can be made $k^2\tau^2$.
 We compute the effect on $\tr(ji)^2$ of  the strictly triangular automorphism $T(s)=\Lambda(f)\in \Aut_R\cx \ol Q$ corresponding to $f(a,b)=sab$, $s\in \cx$. Using \eqref{action}, the term linear in $s$ in $\tr(ji)^2$ is
\begin{equation} 2(j_1i_1-j_2i_2)j_2Ai_1+2j_2i_1(j_2Ai_2-j_1Ai_1),\label{ssss}\end{equation}
while the  quadratic term is
\begin{equation} 2(j_2Ai_1)^2-2j_2i_1j_2A^2i_1.\label{sss}\end{equation}
Thus, we shall be done, once we show that any point of $N_{k,\tau}$ can be transformed to a point for which either \eqref{ssss} or \eqref{sss} is nonzero (excepting, perhaps, the points for which $\tr(ji)^2$ already equals $k^2\tau^2$). If  the rank of $[A,B]-\tau\cdot 1$ is not $1$ (otherwise we are already in $N_1$), then we can use the action of $GL(k,\cx)\times GL(2,\cx)$ to assume that 
$$  i^T=\begin{pmatrix}1 & 0 &\dots & 0\\ 0 & 1 &\dots & 0 \end{pmatrix},$$
and 
$$j=\begin{pmatrix}\alpha_1 & 0 &\dots & 0\\ 0 & \alpha_2 &\dots & 0 \end{pmatrix}, \enskip \text{if $O$ is an orbit of \eqref{albe1}},$$
or
$$j=\begin{pmatrix}\alpha & 1 &0 &\dots & 0\\ 0 & \alpha & 0 &\dots & 0 \end{pmatrix}, \enskip \text{if $O$ is an orbit of \eqref{albe2}}.$$
In both cases $j_2i_1=0$ and  \eqref{sss} reduces to $(j_2Ai_1)^2$, i.e. $(A_{21}\alpha_2)^2$ in the first case or $(A_{21}\alpha)^2$ in the second case. Recall that $\alpha\neq 0$, while  $\alpha_2=0$ implies that our point  already satisfies $\tr(ji)^2=k^2\tau^2$. Thus,  \eqref{sss} is nonzero if $A_{21}\neq 0$. We can  act by  $\Aut\bigl(\cx\langle a,a^\ast\rangle ; [a,a^\ast]\bigr)$ without changing $i$ and $j$, and, so, we are done, unless the $(2,1)$-entry of $(c A+d B)^m$ is equal to zero for all $c,d\in \cx$ and $m\in \oN$. Let us assume that this last condition holds. Let $V=e_2^\perp$ (i.e. $V$ consists of vectors, the second coordinate of which is zero). Thus, $(c A+d B)^m e_1\in V$ for  all $c,d,m$. 
\begin{lemma} Let $U$ be the smallest subspace containing $e_1$ and invariant under $A$ and $B$.  Then $U\subset V$.\label{invariant}\end{lemma}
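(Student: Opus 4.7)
The plan is to prove by induction on $m$ that $e_2^T w e_1 = 0$ for every word $w$ in the letters $A,B$ of length at most $m$. Since $U$ is the span of $we_1$ over all such words, this is equivalent to $U \subset V$. The base case $m \leq 1$ is immediate: $e_2^T e_1 = 0$, and the hypothesis at $m=1$ gives $A_{21} = B_{21} = 0$.

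The heart of the argument is a \emph{comparison lemma}: for any two words $w, w'$ of length $m$ with the same number of $A$'s, one has $e_2^T w e_1 = e_2^T w' e_1$. Since $w$ and $w'$ differ by a sequence of adjacent letter swaps, $w - w'$ is a sum of terms $w_1 [A,B] w_2$ with $|w_1| + |w_2| = m-2$. Using $[A,B] = \tau \cdot 1 + ij$, the $\tau\cdot 1$ contribution yields $\tau \cdot e_2^T w_1 w_2 e_1 = 0$ by the inductive hypothesis applied to the shorter word $w_1 w_2$. For the $ij$ contribution, I would exploit the specific form of $i$ and $j$ in the current setup: the columns of $i$ are $e_1, e_2$ and the rows of $j$ lie in $\Span(e_1^T, e_2^T)$, so $ij = \sum_{a,b \in \{1,2\}} M_{ab}\, e_a e_b^T$ for a $2\times 2$ matrix $M$. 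Each summand of $e_2^T w_1 (ij) w_2 e_1$ has the form $M_{ab}(e_2^T w_1 e_a)(e_b^T w_2 e_1)$; in either orbit case (diagonal $\diag(\alpha_1,\alpha_2)$ or Jordan block with $\alpha$ on the diagonal) the nonzero entries $M_{ab}$ occur only at $(a,b) = (1,1), (2,2)$ or $(1,2)$, and each such summand contains either $e_2^T w_1 e_1$ or $e_2^T w_2 e_1$ as a factor, which vanishes by induction.

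Once the comparison lemma is established, the induction closes quickly: expanding $(cA + dB)^m$ and using the hypothesis that its $(2,1)$-entry vanishes identically in $c, d$, one extracts
$$\sum_{w:\, |A|(w) = p,\, |B|(w) = q} e_2^T w e_1 = 0 \quad \text{for every } p + q = m.$$
By the comparison lemma every term in such a sum equals the same value, and there are $\binom{m}{p}$ of them, so each value is zero. This completes the induction.

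The main obstacle is the comparison lemma, and specifically the treatment of the $ij$ correction. The hypothesis alone only controls symmetric sums of $(2,1)$-entries, so word-by-word vanishing cannot be read off directly; it has to be extracted from the commutator relation. What makes the scheme work is the block structure of $ij$ (both source and target inside $\Span(e_1, e_2)$), which is precisely strong enough to force every correction term to factor through some $e_2^T w_i e_1$ controlled by induction.
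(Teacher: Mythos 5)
Your proof is correct and follows essentially the same route as the paper: induction on word length, polarization of the hypothesis on $(cA+dB)^m$ to reduce to commutator corrections $w_1[A,B]w_2$, and then the decomposition $[A,B]=\tau\cdot 1+ij$ together with the special shape of $ij$ (your observation that $M_{21}=0$ is exactly the paper's remark that $ij\,V\subset\cx e_1$) to close the induction. Your explicit comparison lemma and the division by $\binom{m}{p}$ merely spell out what the paper leaves implicit.
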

\begin{proof} We need to show that 
\begin{equation}A^{i_1}B^{j_1}\dots A^{i_n}B^{j_n}e_1\in V\label{in V_s}\end{equation}
for any $i_1,j_1,\dots, i_n,j_n\geq 0$.
We prove it by induction on the word length $m=i_1+j_1+\dots +i_n +j_n$. Suppose that \eqref{in V_s} holds for all words of length less than $m$. Since $(cA+d B)^m e_1\in V$ for any $c,d$, it is enough to show that
$ w_1[A,B]w_2 e_1\in V$
for all pairs of words $w_1=A^{p_1}B^{q_1}\dots A^{p_v}B^{q_v}$, $w_2=A^{r_1}B^{s_1}\dots A^{r_w}B^{s_w}$, the sum of lenghts of which is $m-2$. Since $[A,B]=ij+\tau\cdot 1$, we have
$$ w_1[A,B]w_2e_1= \tau w_1w_2e_1 +w_1ij w_2 e_1.$$
The first term is in $V$ owing to the inductive assumption. For the second term, $w_1ij w_2 e_1$, we know that $w_2e_1\in V$ thanks to the inductive assumption. We now observe that, with our choices of $i$ and $j$, $ij V=\cx e_1$, and, hence, $w_1ijw_2 e_1\in w_1(\cx e_1)=\cx w_1 e_1$, which is contained in $V$, again due to the inductive assumption.
\end{proof}
Therefore, we have a proper nontrivial subspace $U$, invariant for both $A$ and $B$, and, so,  $A,B$ can be simultaneously conjugated to \begin{equation} A=\begin{pmatrix} A_1 & A_3\\ 0 & A_2\end{pmatrix}, \quad B=\begin{pmatrix} B_1 & B_3\\ 0 & B_2\end{pmatrix}.\label{block}\end{equation}
Since $[A,B]-\tau\cdot 1$ has rank $2$ and $\tau\neq 0$, $[A_s,B_s]-\tau\cdot 1$ has rank $1$ for $s=1,2$.
\begin{lemma} In the case \eqref{albe1} we have $\alpha_1=-\tau\dim U$, $\alpha_2=-\tau\dim \cx^k/U$, and, in the case \eqref{albe2}, we have $\alpha =-\tau\dim U$.\label{dim} \end{lemma}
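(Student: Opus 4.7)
The plan is to extract both claims from trace identities applied to the two diagonal blocks of the relation $[A,B]-\tau\cdot 1=ij$ in the simultaneous block-triangularization \eqref{block}. Since $U$ is invariant under $A$ and $B$, it is preserved by $[A,B]$, and hence by $ij=[A,B]-\tau\cdot 1$; write $(ij)_1$ for the restriction of $ij$ to $U$ and $(ij)_2$ for the operator induced on $\cx^k/U$. The diagonal blocks of the defining relation then read $[A_s,B_s]=\tau\cdot 1+(ij)_s$ for $s=1,2$. Taking traces and using that commutator traces vanish yields
\[
\tr(ij)_1=-\tau\dim U,\qquad \tr(ij)_2=-\tau\dim(\cx^k/U).
\]

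The second step is to identify these traces with the $\alpha$'s using the normal forms of $i$ and $j$ fixed just before Lemma \ref{invariant}. There one has $i_1=e_1$, $i_2=e_2$, and in both cases the second row $j_2$ of $j$ is a scalar multiple of $e_2^T$ (namely $\alpha_2 e_2^T$ in case \eqref{albe1}, and $\alpha e_2^T$ in case \eqref{albe2}). By Lemma \ref{invariant} we have $U\subset V=e_2^\perp$, so any $u\in U$ satisfies $j_2 u=0$; hence $ij(u)=i_1(j_1 u)=(j_1 u)\,e_1\in \cx e_1\subset U$, confirming that $(ij)_1$ is a well-defined, rank-one endomorphism of $U$. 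Its trace is the scalar by which it acts on its one-dimensional image $\cx e_1\subset U$, and a direct reading off $j_1$ gives $(j_1)_1=\alpha_1$ in case \eqref{albe1} and $(j_1)_1=\alpha$ in case \eqref{albe2}. Combined with the first trace equation, this yields $\alpha_1=-\tau\dim U$ and $\alpha=-\tau\dim U$ respectively.

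For the remaining assertion $\alpha_2=-\tau\dim(\cx^k/U)$ in case \eqref{albe1}, I would compute $\tr(ij)$ globally from the explicit matrix form $ij=\diag(\alpha_1,\alpha_2,0,\dots,0)$, obtaining $\tr(ij)=\alpha_1+\alpha_2$. On the other hand $\tr(ij)=\tr(ij)_1+\tr(ij)_2=\alpha_1+\tr(ij)_2$, so $\tr(ij)_2=\alpha_2$, and the second displayed trace equation closes the argument.

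No serious obstacle is anticipated: the computation is essentially a bookkeeping exercise once one observes that the image of $ij$ is contained in $\Span(e_1,e_2)$ while $U$ lies in the kernel of the $e_2$-coordinate functional $j_2$, so the action of $ij$ on $U$ collapses to multiplication by a single entry of $j_1$. The only delicate point is keeping straight which row of $j$ vanishes on $U$ and why; this is precisely the content of Lemma \ref{invariant}, which was proved in order to make this reduction legitimate.
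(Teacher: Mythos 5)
Your proof is correct and follows essentially the same route as the paper's: the invariance of $U$ under $A$ and $B$ gives $\tr\bigl((ij)_{|U}\bigr)=-\tau\dim U$, while $U\subset V=e_2^\perp$ identifies that trace with $\alpha_1$ (resp.\ $\alpha$). The only cosmetic difference is that you obtain $\alpha_2=-\tau\dim(\cx^k/U)$ from the trace of the quotient block, whereas the paper invokes the relation $\alpha_1+\alpha_2=-k\tau$; these are equivalent.
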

\begin{proof} (cf. the proof of Lemma 1.3 in \cite{W}) Any subspace $U$, invariant for both $A$ and $B$, is also invariant for $[A,B]-\tau\cdot 1$, and, hence $\tr (ij)_{|U}=-\tau \dim U$. We have $U\subset V$, so $\tr (ij)_{|U}=\alpha_1$ or $\tr (ij)_{|U}=\alpha$, depending on the type of orbit.  Since $\alpha_1+\alpha_2=-k\tau$, the result follows.\end{proof}

\begin{lemma} If $ij$ is in the orbit of \eqref{albe1} and $m=[A,B,i,j]\not\in \TAut_R\bigl(\cx \ol Q;c\bigr) M_{k,\tau}$, then $\alpha_1=\alpha_2$.\end{lemma}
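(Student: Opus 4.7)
The plan is to suppose $\alpha_1\neq \alpha_2$ and derive a contradiction with the hypothesis $m\notin \TAut_R\bigl(\cx\ol Q;c\bigr)\cdot M_{k,\tau}$. First, apply the $GL(2)$-swap $\bigl(\begin{smallmatrix}0 & 1\\ 1 & 0\end{smallmatrix}\bigr)\in \TAut_R\bigl(\cx\ol Q;c\bigr)$ to $m$; since the hypothesis is $\TAut$-invariant, rerunning the argument leading to Lemma~\ref{invariant} on the swapped (and re-normalized) point produces a second $A,B$-invariant subspace $U'\ni e_2$ with $U'\subset e_1^\perp$ and $\dim U'=k-d$. If $U\cap U'\neq 0$, then $V:=U+U'$ is a proper $A,B$-invariant subspace containing both $e_1$ and $e_2$; restricting $[A,B]=\tau\cdot 1+ij$ to $V$ and taking traces forces $\tau(\dim V-k)=0$, which is absurd. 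Hence $\cx^k=U\oplus U'$, and $A,B$ are simultaneously block-diagonal, each block being a rank-$1$ Calogero-Moser pair.

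Next, apply the unipotent $T_\lambda=\bigl(\begin{smallmatrix}1 & \lambda\\ 0 & 1\end{smallmatrix}\bigr)\in GL(2)\subset\TAut$, leaving $A,B$ fixed and producing $j_2i_1=\lambda(\alpha_2-\alpha_1)\neq 0$, followed by the strictly triangular $\Lambda(sp(a)b)\in\TAut$ for an arbitrary $p\in\cx[a]$. The vanishings $(p(A))_{12}=(p(A))_{21}=0$ from the block structure, combined with a direct calculation in the spirit of \eqref{ssss}--\eqref{sss}, yield
\[
\Delta\tr(ji)^2 \;=\; 2s^2\lambda^2\bigl[\,C_p^2-(\alpha_2-\alpha_1)D_p\,\bigr],
\]
with $C_p=\alpha_2(p(A))_{22}-\alpha_1(p(A))_{11}$ and $D_p=\alpha_2(p(A)^2)_{22}-\alpha_1(p(A)^2)_{11}$. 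The assumption forces this variation to vanish identically in $s,\lambda$, also after precomposing with any $\phi\in\Aut\bigl(\cx\langle a,a^*\rangle;[a,a^*]\bigr)$ (which replaces $A$ by a polynomial $q(A,B)$ while preserving the block structure). Polarising $p\to p+q$ then shows that the normalized functional $\tilde L(p) := C_p/(\alpha_2-\alpha_1) = \alpha(p(A))_{11}+\beta(p(A))_{22}$, with $\alpha=-\alpha_1/(\alpha_2-\alpha_1)$ and $\beta=\alpha_2/(\alpha_2-\alpha_1)$ (so that $\alpha+\beta=1$ and $\alpha\beta\neq 0$), is a $\cx$-algebra homomorphism $\cx[A]\to\cx$.

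Every such homomorphism is evaluation at some eigenvalue $\lambda_0$ of $A$. For a generic Aut-image of $A$, the eigenvalues of $A$ are pairwise distinct and the projectors $\pi_{\lambda_0}$ onto individual eigenspaces are polynomials in $A$; testing $\tilde L$ against these projectors gives $\alpha(\pi_{\lambda_0})_{11}+\beta(\pi_{\lambda_0})_{22}=1$ for a single $\lambda_0$ and $=0$ for all other eigenvalues. Since $\alpha$ and $\beta$ are both nonzero, this forces $e_1$ to be an eigenvector of $A|_U$ (with eigenvalue $\lambda_0$) and $e_2$ to be an eigenvector of $A|_{U'}$ (with the same $\lambda_0$). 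For $d\geq 2$, however, the Berest-Wilson transitivity of $\Aut(\cx\langle a,a^*\rangle;[a,a^*])$ on $M_{d,\tau}$ lets one move $(A|_U,B|_U)$ to a point of $M_{d,\tau}$ where $e_1$ is \emph{not} an eigenvector of $A|_U$, contradicting the polynomial identity; symmetrically, the case $k-d\geq 2$ is excluded. The only surviving possibility is $d=1=k-d$ (so $k=2$), where $\alpha_1=\alpha_2=-\tau$ holds automatically. The main technical obstacle is this last step: one must check that Aut-transitivity applied to the block-diagonal pair $(A,B)$ still moves each block independently enough to violate the eigenvector condition on $U$ (or $U'$), without needing any coupled control over the other block; this is fine because polynomial substitutions preserve block-diagonality and restrict to the Berest-Wilson action on each block separately.
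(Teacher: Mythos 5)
Your opening moves are correct, and they are genuinely different from the paper's: the $GL(2)$-swap does give a second invariant subspace $U'\ni e_2$ with $U'\subset e_1^{\perp}$ (the hypothesis is $\TAut_R\bigl(\cx\ol Q;c\bigr)$-invariant and both $\alpha_1,\alpha_2\neq 0$, so the argument leading to Lemma \ref{invariant} applies verbatim to the swapped point), the trace argument does force $\cx^k=U\oplus U'$, and I have checked your variation formula: with $(p(A))_{12}=(p(A))_{21}=0$ the linear terms in $s$ cancel and one gets exactly $2s^2\lambda^2\bigl[C_p^2-(\alpha_2-\alpha_1)C_{p^2}\bigr]$, so the omitted-value argument (as after \eqref{sss}) does make $\tilde L$ a character of $\cx[A]$, hence evaluation at some eigenvalue $\lambda_0$. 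For comparison, the paper at this stage keeps only the block-triangular form \eqref{block} and compares the spectral curves $S_1,S_2$ of the two blocks: Berest--Wilson makes $S_1$ irreducible, Shiota (Lemma \ref{Shiota}) makes $S_2$ reduced, and then either a generic $ASL(2,\cx)$-combination is regular semisimple, contradicting the hypothesis via Lemma \ref{Omax}, or $S_1=S_2$, which forces equal block sizes and hence $\alpha_1=\alpha_2$ by Lemma \ref{dim}.

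The proof breaks where you extract eigenvector information from the character. First, you cannot arrange that ``for a generic Aut-image of $A$ the eigenvalues of $A$ are pairwise distinct'': if any $\TAut_R\bigl(\cx\ol Q;c\bigr)$-image of $m$ had regular semisimple $A$, Lemma \ref{Omax} would place $m$ in $\TAut_R\bigl(\cx\ol Q;c\bigr)M_{k,\tau}$, contradicting the standing hypothesis; at best you can make one block, say $A|_U$, regular semisimple, and then the spectra of the two blocks may overlap. Second, and decisively, the relation $\alpha(\pi_\lambda)_{11}+\beta(\pi_\lambda)_{22}=\delta_{\lambda\lambda_0}$ does not ``force $e_1$ to be an eigenvector of $A|_U$'': here $\alpha=d/(2d-k)$ and $\beta=-(k-d)/(2d-k)$ are fixed nonzero scalars, while $(\pi_\lambda)_{11}=e_1^T\pi_\lambda e_1$ and $(\pi_\lambda)_{22}=e_2^T\pi_\lambda e_2$ are arbitrary complex numbers (the spectral projections are not orthogonal), so for each $\lambda\neq\lambda_0$ you get a single linear relation between two unknowns, not the vanishing of each term; and even if both diagonal entries did vanish for $\lambda\neq\lambda_0$, that constrains only the scalars $e_1^T\pi_\lambda e_1$, not the vectors $\pi_\lambda e_1$, so $e_1$ still need not be an eigenvector (one can have $\pi_\lambda e_1\neq 0$ with $e_1^T\pi_\lambda e_1=0$). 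Since your concluding Berest--Wilson step is built to contradict precisely that eigenvector condition, no contradiction is reached. What the character identity does yield (for instance, that the spectra of $A|_U$ and $A|_{U'}$ cannot be disjoint, by summing the relation over the eigenvalues of one block) is strictly weaker than what you need; to finish along these lines you would have to upgrade it to an identification of the spectral data of the two blocks, which is exactly what the paper obtains by the spectral-curve comparison.
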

\begin{proof}
We can assume that $A,B$ are of the form \ref{block}. Owing to the last lemma, it is enough to show that the diagonal blocks have equal size. We consider the spectral curves
$$ S_i=\left\{[\zeta_1,\zeta_2,\zeta_3]\in \oP^2;\enskip \det(\zeta_3-A_i\zeta_1-B_i\zeta_2)=0\right\},\quad i=1,2.$$
We can apply above-mentioned transitivity result of Berest and Wilson to $(A_1,B_1)$ and assume that $S_1$ is irreducible. Using the Shiota Lemma \ref{Shiota}, we can find a polynomial $p(t)$ such that $A_2+p(B_2)$ is regular semisimple.   Since $p(t)$ is arbitrarily small, $S_1$ can be assumed to remain irreducible. Thus, we can assume that $S_1$ is irreducible and $S_2$ is reduced. It follows that, 
unless $S_1=S_2$, we can use the action of $ASL(2,\cx)$ to make $A$ semisimple, in which case Lemma \ref{Omax} yields a contradiction. If $S_1=S_2$, then the diagonal blocks have the same size.
\end{proof}
\begin{remark} At this stage, we have proved Theorem \ref{transitive} for odd $k$.\end{remark}
We now reduce the question to the case \eqref{albe1}.
\begin{lemma} If $ij$ is in the orbit of \eqref{albe2} and $m=[A,B,i,j]\not\in \TAut_R\bigl(\cx \ol Q;c\bigr) M_{k,\tau}$, then there exists an $m^\prime= [A^\prime,B^\prime,i^\prime,j^\prime]\not\in \TAut_R\bigl(\cx \ol Q;c\bigr) M_{k,\tau}$ with $i^\prime j^\prime$ in the orbit of \eqref{albe1}. Moreover, if $A,B$ are of the form \eqref{block}, then so are $A^\prime,B^\prime$ with  $A_1^\prime=A_1,A_2^\prime=A_2,B_1^\prime=B_1,B_2^\prime=B_2$ and $A_3^\prime=B_3^\prime=0$. \label{onecase}\end{lemma}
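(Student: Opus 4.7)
The plan is to decouple the two $A,B$-invariant pieces in the block form \eqref{block} by setting $A_3=B_3=0$ and constructing a new $(i',j')$ that reproduces the resulting decoupled commutator. By Lemma \ref{dim} applied in case \eqref{albe2}, $\alpha=-k\tau/2$ and hence $\dim U=k/2$; in particular $k$ is necessarily even, consistently with the remark that the odd $k$ case has already been handled. For $s=1,2$, the block $r_s:=[A_s,B_s]-\tau\cdot 1_{k/2}$ has rank $1$ and trace $\alpha$, so it factors as $r_s=u_s v_s$ with a column $u_s\in\cx^{k/2}$ and a row $v_s$ satisfying $v_s u_s=\alpha$.

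Set
\[
A'=\begin{pmatrix} A_1 & 0\\ 0 & A_2\end{pmatrix},\quad B'=\begin{pmatrix} B_1 & 0\\ 0 & B_2\end{pmatrix},\quad i'=\begin{pmatrix} u_1 & 0\\ 0 & u_2\end{pmatrix},\quad j'=\begin{pmatrix} v_1 & 0\\ 0 & v_2\end{pmatrix}.
\]
A direct check gives $[A',B']=\diag(r_1,r_2)+\tau\cdot 1$ and $i'j'=\diag(u_1 v_1,u_2 v_2)=\diag(r_1,r_2)$, so $[A',B']-i'j'=\tau\cdot 1$ and $m':=[A',B',i',j']\in N_{k,\tau}$. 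Each rank-one summand $u_s v_s$ has eigenvalues $\alpha$ (on $\cx u_s$) and $0$; since $u_1$ and $u_2$ lie in distinct block summands of $\cx^k$, $i'j'$ has a two-dimensional $\alpha$-eigenspace, is diagonalisable, and is conjugate to $\diag(\alpha,\alpha,0,\dots,0)$, placing it in orbit \eqref{albe1} with $\alpha_1=\alpha_2=\alpha$. The ``moreover'' clause is immediate from the construction.

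The remaining, main step is to verify $m'\notin\TAut_R(\cx\ol Q;c)\,M_{k,\tau}$. My plan is to argue by contradiction: assuming $m'=g\mu$ for some $g\in\TAut_R$ and $\mu\in M_{k,\tau}$, I aim to construct from $g$ an element $\tilde g\in\TAut_R$ transporting $m$ into $M_{k,\tau}$, contradicting the hypothesis on $m$. The key structural fact to exploit is that $m$ and $m'$ share the same diagonal blocks $(A_s,B_s)$, and hence the same spectral curves $S_1$ and $S_2$; moreover, for stuck $m$ in case \eqref{albe2} the spectral-curve argument of the preceding lemma still applies verbatim (Berest-Wilson transitivity on $(A_1,B_1)$ to assume $S_1$ irreducible, Shiota's Lemma \ref{Shiota} on $(A_2,B_2)$ to assume $S_2$ reduced, then the $ASL(2,\cx)$-action combined with Lemma \ref{Omax} to rule out $S_1\ne S_2$), so $S_1=S_2$ for $m$ and therefore also for $m'$. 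The lift $\tilde g$ should be obtained from $g$ by composing with an additional strictly triangular automorphism $\Lambda(h)$ (cf.\ Proposition \ref{stabiliser}), where $h\in F_2/[F_2,F_2]$ is a non-commutative polynomial in $a$ and $b=xy$ whose derivatives $\partial h/\partial a$ and $\partial h/\partial b$ absorb the entries of $A_3$ and $B_3$. The main technical difficulty lies in producing such $h$ explicitly and checking that the composition $\tilde g$ really does land $m$ in $M_{k,\tau}$; this is where the shared spectral-curve data $S_1=S_2$ becomes essential.
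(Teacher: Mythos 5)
The easy half of your argument is fine: your block-diagonal quadruple $(A',B',i',j')$ does satisfy $[A',B']-i'j'=\tau\cdot 1$, the matrix $i'j'$ is diagonalisable with eigenvalues $\alpha,\alpha,0,\dots,0$, so it lies in the orbit \eqref{albe1} with $\alpha_1=\alpha_2=-k\tau/2$, and the ``moreover'' clause holds by construction; this is essentially the same point the paper produces. But the heart of the lemma --- the claim $m'\notin\TAut_R\bigl(\cx \ol Q;c\bigr)M_{k,\tau}$ --- is left as a plan, and the plan as stated cannot work. A strictly triangular automorphism $\Lambda(h)$ is the identity on $\cx Q$, hence fixes $a,x,y$ and therefore fixes $A$, $i_1$, $j_2$ in any representation; it can never ``absorb the entries of $A_3$''. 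More fundamentally, $m$ and $m'$ need not lie in the same $\TAut_R\bigl(\cx \ol Q;c\bigr)$-orbit at all (your $m'$ is built by hand from the diagonal blocks, not by the group), so there is no mechanism for transporting a hypothetical $g$ with $gm'\in M_{k,\tau}$ into a $\tilde g$ with $\tilde g m\in M_{k,\tau}$; the coincidence of spectral curves does not supply one.

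What the paper does instead is to relate $m'$ to the orbit of $m$ by a \emph{degeneration} rather than a group element: acting by $\bigl(\diag(\epsilon I,\epsilon^{-1}I),\diag(\epsilon,\epsilon^{-1})\bigr)\in GL(k,\cx)\times GL(2,\cx)$ --- where the $GL(2,\cx)$ factor lies in $\mathrm{Aff}_c\subset\TAut_R\bigl(\cx \ol Q;c\bigr)$ and the $GL(k,\cx)$ factor is a gauge transformation --- produces points $m_\epsilon$ in $\TAut_R\bigl(\cx \ol Q;c\bigr)\cdot m$; because $i_1\in U$ and $j_2U=0$, the off-diagonal blocks are scaled by $\epsilon^2$ and $m_\epsilon$ converges as $\epsilon\to 0$ to a block-diagonal $m'$ of exactly your shape. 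If some $g$ moved $m'$ into $M_{k,\tau}$, one would have $\tr\bigl(g(j')g(i')\bigr)^2=k^2\tau^2$; but by the analysis preceding the lemma, on the orbit of the stuck point $m$ the quantity $\tr(ji)^2$ is frozen at $(k/2)^2\tau^2$, so $\tr\bigl(g(j_\epsilon)g(i_\epsilon)\bigr)^2=k^2\tau^2/4$ for all $\epsilon\neq 0$, and continuity in $\epsilon$ gives the contradiction. To salvage your version you would either have to realise your $m'$ (or a point of its orbit) as such a limit of points of $\TAut_R\bigl(\cx \ol Q;c\bigr)\cdot m$, or produce some other invariant separating $\TAut_R\bigl(\cx \ol Q;c\bigr)\cdot m'$ from $M_{k,\tau}$; as written, the proposal establishes only the routine part of the lemma.
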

\begin{proof} We can assume that $A,B$ are of the form \ref{block}, with blocks of size $n=k/2$. We also know that $i_1\in U$ and $j_2U=0$. It follows that the orbit of $(A,B,i,j)$ under the action of
$$ \begin{pmatrix} \epsilon I_{n\times n} & 0\\ 0 & \epsilon^{-1} I_{n\times n}\end{pmatrix}\times \begin{pmatrix}  \epsilon & 0\\ 0 & \epsilon^{-1}\end{pmatrix}\in GL(k,\cx)\times GL(2,\cx)$$
induces a convergent sequence $m_\epsilon=[A_\epsilon,B_\epsilon,i_\epsilon,j_\epsilon]$ in $N_{k,\tau}$ as $\epsilon\rightarrow 0$, and the limit  $m^\prime=[A^\prime,B^\prime,i^\prime,j^\prime]$ has $i^\prime j^\prime$ in the orbit of \eqref{albe1} with 
$\alpha_1=\alpha_2$. Suppose now that there is a $g\in \TAut_R\bigl(\cx \ol Q;c\bigr)$ such that $gm^\prime\in M_{k,\tau}$. This means that $\tr\bigl(g(j^\prime)g(i^\prime)\bigr)^2=k^2\tau^2$. On the other hand, we know from the previous considerations that, given the assumption on $m$, $\TAut_R\bigl(\cx \ol Q;c\bigr)$ does not change $\tr(ji)^2=(k/2)^2\tau^2$. Thus, $\tr\bigl(g(j_\epsilon)g(i_\epsilon)\bigr)^2=k^2\tau^2/4$ for every $\epsilon$, and we obtain a contradiction.
\end{proof}

Thus, we can assume that we are in the case \eqref{albe1} with $\alpha_1=\alpha_2=-k\tau/2$. As before, we use the action of $GL(k,\cx)\times GL(2,\cx)$ to assume that  
$$  i^T=\begin{pmatrix}1 & 0 &\dots & 0\\ 0 & 1 &\dots & 0 \end{pmatrix},\quad
j=\begin{pmatrix}-k\tau/2 & 0 &\dots & 0\\ 0 & -k\tau/2 &\dots & 0 \end{pmatrix}.$$
Observe now that the action of $GL(2,\cx)$ leaves $ji$ invariant, and, hence, preserves the condition $j_2i_1=0$. We can repeat the arguments after Lemma \ref{Shiota} for any $(i,j)$ in the orbit of $GL(2,\cx)$ and conclude that, if our point cannot be moved into $M_{k,\tau}$, then:
$$ \left((cA+dB)^m\right)_{21}= \left((cA+dB)^m\right)_{12}=0,\quad \left((cA+dB)^m\right)_{11}= \left((cA+dB)^m\right)_{22}$$
for all $c,d\in \cx$ and $m\in \oN$. This, in turn, implies that if we put $v=pe_1+qe_2$, $w=qe_1-pe_2$, then $w^T(cA+dB)^m v=0$ for every $c,d\in \cx$ and $m\in \oN$. We can now repeat the proof of Lemma \ref{invariant}, and conclude that, for every $\zeta\in [p,q]\in \oP^1$, there is an $(A,B)$-invariant subspace $U_\zeta$ containing $v=pe_1+qe_2$ and annihilated by $w^T=(qe_1-pe_2)^T$. The argument used in the proof of Lemma \ref{dim} shows that $U_\zeta\cap U_{\zeta^\prime}=0$, if $\zeta\neq \zeta^\prime$.
\par
Using the decomposition $\cx^{2n}=U_0\oplus U_\infty$, we write $A$ and $B$ as
$$A=\begin{pmatrix} A_1 & 0\\ 0 & A_2\end{pmatrix}, \quad B=\begin{pmatrix} B_1 & 0\\ 0 & B_2\end{pmatrix}.$$
Using the Shiota lemma, we can assume that both $A_1$ and $B_1$ are regular semisimple. We then have:
\begin{lemma} In the above situation,  $A_2=A_1$ and $B_2=B_1$.\label{equal}\end{lemma}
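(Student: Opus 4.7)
The plan is to use the $\oP^1$-family $\{U_\zeta\}$ to construct an invertible intertwiner $\phi\colon U_0\to U_\infty$ between the pairs $(A_1,B_1)$ and $(A_2,B_2)$, and then realise it as a basis change on $U_\infty$ so that the two blocks literally coincide.

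First, for any $\zeta=[p:q]\in\oP^1$ with $pq\neq 0$, the pairwise triviality $U_\zeta\cap U_\infty=\{0\}$ together with the equal dimensions $\dim U_\zeta=n=\dim U_0$ forces $U_\zeta$ to be the graph of a uniquely determined linear map $\phi_\zeta\colon U_0\to U_\infty$. The $(A,B)$-invariance of $U_\zeta$, combined with the block-diagonality of $A$ and $B$ with respect to $U_0\oplus U_\infty$, translates into the intertwining identities $\phi_\zeta A_1=A_2\phi_\zeta$ and $\phi_\zeta B_1=B_2\phi_\zeta$. Decomposing the required element $pe_1+qe_2\in U_\zeta$ according to $e_1\in U_0$, $e_2\in U_\infty$, one reads off the normalisation $\phi_\zeta(e_1)=(q/p)e_2$.

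The crucial input is cyclicity: by the very construction preceding Lemma~\ref{invariant}, $U_0$ is the smallest $(A,B)$-invariant subspace of $\cx^{2n}$ containing $e_1$, so $U_0=\cx\langle A_1,B_1\rangle\cdot e_1$ and $e_1$ is a joint cyclic vector for $(A_1,B_1)$; the symmetric argument shows $U_\infty=\cx\langle A_2,B_2\rangle\cdot e_2$. The intertwining identities then propagate from the cyclic generator: for every non-commutative word $w$, $\phi_\zeta\bigl(w(A_1,B_1)e_1\bigr)=(q/p)\,w(A_2,B_2)e_2$, so $\phi_\zeta$ is completely determined by its value on $e_1$, and its image equals $\cx\langle A_2,B_2\rangle\cdot e_2=U_\infty$. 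In particular $\phi_\zeta$ is an isomorphism.

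Finally, set $\phi:=(p/q)\phi_\zeta$, so that $\phi\colon U_0\xrightarrow{\sim}U_\infty$ is an invertible intertwiner with $\phi(e_1)=e_2$. Choose any basis $e_1=u_1,u_2,\dots,u_n$ of $U_0$; then $e_2=\phi(u_1),\phi(u_2),\dots,\phi(u_n)$ is a basis of $U_\infty$ whose first vector is the required $e_2$, and in this pair of bases the matrices of $A_2,B_2$ on $U_\infty$ agree with those of $A_1,B_1$ on $U_0$, proving the lemma. The main obstacle I anticipate is the cyclicity statement: it is what simultaneously pins down the intertwiner from a single datum $\phi(e_1)=e_2$ and forces its invertibility, and everything else in the argument is a routine consequence of the graph description of $U_\zeta$.
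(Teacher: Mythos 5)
Your argument is correct, and it takes a genuinely different route from the paper's. The paper proves Lemma \ref{equal} by first establishing Lemma \ref{lala} (a decomposition of $\cx^{2n}$ into two-dimensional joint eigenplanes meeting $U_0$ and $U_\infty$ in lines), deducing that $A_2$ and $B_2$ have the same eigenvalues as $A_1$ and $B_1$, conjugating so that $A_1=A_2$ is diagonal, reading off the normal form \eqref{i,j} of $i,j$, identifying each $U_{[p,q]}$ explicitly as the ``diagonal'' subspace spanned by $\{pe_i+qe_{n+i}\}$, and finally matching the eigenvector coordinates of $B_1$ and $B_2$; this route leans on the regular semisimplicity of $A_1,B_1$ obtained via the Shiota lemma, and it delivers the equality $B_1=B_2$ directly in the coordinates used in the rest of the argument. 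You instead treat a single generic $U_\zeta$ ($pq\neq 0$) as the graph of an intertwiner $\phi_\zeta\colon U_0\to U_\infty$ and exploit that $U_0$ and $U_\infty$ are, by the construction repeating the proof of Lemma \ref{invariant}, the \emph{smallest} $(A,B)$-invariant subspaces containing $e_1$ and $e_2$, so that $e_1,e_2$ are joint cyclic vectors; surjectivity of $\phi_\zeta$ then follows from $\phi_\zeta(e_1)=(q/p)e_2$ and cyclicity, giving simultaneous similarity of $(A_1,B_1)$ and $(A_2,B_2)$ without any semisimplicity hypothesis. Two small points you should make explicit: the equality $\dim U_\zeta=n$ is not free but follows from the trace argument of Lemma \ref{dim} (exactly as the paper uses it implicitly for $U_0,U_\infty$ and for the hypothesis of Lemma \ref{lala}), and, since the lemma is used immediately afterwards together with the normal form \eqref{i,j}, it is worth observing that your intertwiner satisfies $\phi(i_1)=i_2$, so transporting the $A_1$-eigenbasis of $U_0$ (in which $i_1$ has all entries nonzero, rescaled to $(1,\dots,1)^T$ via the rank-one identity $i_1j_1|_{U_0}=[A_1,B_1]-\tau\cdot 1$) by $\phi$ reproduces the form \eqref{i,j} as well; with these remarks your proof plugs seamlessly into the remainder of the paper's argument, and is arguably more economical, since the eigenvalue comparison that the paper extracts from Lemma \ref{lala} comes for free from the similarity.
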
 
We begin the proof by showing that $A_2$ has the same eigenvalues as $A_1$ and similarly for $B_2$ and $B_1$:
\begin{lemma} Let $\cx^{2n}=U_0\oplus U_1$,  $\dim U_0=\dim U_1=n$ and let $X:\cx^{2n}\to \cx^{2n}$ preserve both $U_0$ and $U_1$. Suppose also that there exists a further $n$-dimensional $X$-invariant subspace $V$, such that $V\cap U_0=V\cap U_{1}=0$ and $X_{|V}$ is regular semisimple with eigenvalues $\lambda_1,\dots,\lambda_n$. Then $\cx^{2n}=\bigoplus_{i=1}^n W_i$, where $\dim W_i=2$, $X_{|W_i}=\lambda_i\cdot 1$ and $\dim W_i\cap U_j=1$ for any $i=1,\dots,n$, $j=0,1$.\label{lala}\end{lemma}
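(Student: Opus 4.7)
The plan is to use the regular semisimplicity of $X|_V$ to produce an eigenbasis of $V$ adapted to the splitting $\cx^{2n}=U_0\oplus U_1$, and then to show that the components of each eigenvector are themselves eigenvectors of $X$, from which the $W_i$ appear directly.

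First, since $X|_V$ is regular semisimple, the eigenvalues $\lambda_1,\dots,\lambda_n$ are distinct, and $V$ has a basis of eigenvectors $v_1,\dots,v_n$ with $Xv_i=\lambda_iv_i$. Decompose $v_i=u_i^0+u_i^1$ along $\cx^{2n}=U_0\oplus U_1$. The hypothesis $V\cap U_0=V\cap U_1=0$ ensures both $u_i^0$ and $u_i^1$ are nonzero (otherwise $v_i\in U_0$ or $v_i\in U_1$). Applying $X$ and using that $X(U_j)\subset U_j$, the identity $Xu_i^0+Xu_i^1=\lambda_iu_i^0+\lambda_iu_i^1$ splits across the direct sum into $Xu_i^j=\lambda_iu_i^j$ for $j=0,1$.

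Next, I would verify that $\{u_1^0,\dots,u_n^0\}$ is a basis of $U_0$ (and similarly for $U_1$). Linear independence: a relation $\sum c_iu_i^0=0$ gives $\sum c_iv_i=\sum c_iu_i^1\in V\cap U_1=0$, whence $c_i=0$ by independence of the $v_i$. Since $\dim U_0=n$, this is a basis; the argument for $U_1$ is symmetric. Now define $W_i=\Span(u_i^0,u_i^1)$. By construction $X|_{W_i}=\lambda_i\cdot 1$, and $\cx^{2n}=\bigoplus_{i=1}^n W_i$ because the union of the two bases spans $U_0\oplus U_1$ and the count of dimensions matches ($2n$ vectors in a $2n$-dimensional space, and distinctness of eigenvalues forces the $W_i$ to intersect trivially).

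Finally, to check $\dim W_i\cap U_j=1$: clearly $\cx u_i^0\subset W_i\cap U_0$. Conversely, if $au_i^0+bu_i^1\in U_0$ then $bu_i^1\in U_0\cap U_1=0$, so $b=0$ since $u_i^1\neq 0$; thus $W_i\cap U_0=\cx u_i^0$, and the analogous statement for $U_1$ is identical. There is essentially no obstacle here — the only subtle point, which must be handled explicitly, is that $u_i^0$ and $u_i^1$ are both nonzero, and this is exactly where the hypothesis $V\cap U_j=0$ enters.
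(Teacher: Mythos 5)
Your proof is correct and follows essentially the same route as the paper: decompose each eigenvector $v_i$ of $X|_V$ along $\cx^{2n}=U_0\oplus U_1$, use invariance of $U_0,U_1$ to see that both components are $\lambda_i$-eigenvectors, and take $W_i$ to be their span. The paper's own argument is just a terser version of this; your additional checks (nonvanishing of the components, that they form bases of $U_0$ and $U_1$, and the direct-sum count) simply make explicit what the paper leaves to the reader.
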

\begin{proof} Let $v_i\in V$ be an eigenvector for $X$, $Xv=\lambda _iv_i$. Decompose $v=v_i^0+v_i^1$, with $v_i^j\in U_j$, $v_i^j\neq 0$. Since $Xv_i^j\in U_j$ and $\lambda_i v_i =\lambda_i v_i^0+\lambda_i v_i^1$, we must have $Xv_i^j=\lambda v_i^j$, $j=0,1$. Setting $W_i=\Span\{v_i^0,v_i^1\}$ gives the required decomposition.
\end{proof}
We now continue with the proof of Lemma \ref{equal}. We can conjugate $A$, so that $A_1=A_2$ is diagonal with distinct eigenvalues $\lambda_1,\dots,\lambda_n$. It follows that (up to a non-zero multiple):
\begin{equation}i^T=\begin{pmatrix} 1 &\dots & 1&0 & \dots & 0\\0 & \dots & 0 &1 &\dots & 1\end{pmatrix},\quad j=\begin{pmatrix}-\tau &\dots & -\tau&0 & \dots & 0\\ 0 & \dots & 0 &-\tau &\dots & -\tau\end{pmatrix}.\label{i,j}\end{equation}
Since, for any $[p,q]\in \oP^1$, $p i_1+qi_2\in U_{[p,q]}$, the last lemma implies  that $U_{[p,q]}$ is generated by $\{pe_i+qe_{n+i};\;i=1,\dots,n\}$. Let $v_i=\sum_{j=1}^n \alpha_{ij} e_j$, $w_i=\sum_{j=1}^n \beta_{ij} e_{n+j}$ be the eigenvectors of $B_1$ and $B_2$, respectively (with the same eigenvalue). Since both $B_1$ and $B_2$ are regular semisimple, we can apply to them the arguments already used for $A_1$ and $A_2$ and conclude that $U_{[p,q]}$ is generated by $\{pv_i+qw_i;\;i=1,\dots,n\}$.
But $$pv_i+qw_i=\sum\alpha_{ij}(pe_i+qe_{n+i}) +\sum (\beta_{ij}-\alpha_{ij})qf_i,$$ and, hence, $\sum (\beta_{ij}-\alpha_{ij})qf_i\in U_\infty\cap U_{[p,q]}$. Therefore $\beta_{ij}=\alpha_{ij}$ for all $i,j$ and Lemma \ref{equal} is proved.

Thus, we can assume that $A$ and $B$ can be simultaneously conjugated to 
$$ A=\begin{pmatrix} \Lambda & 0\\ 0 & \Lambda \end{pmatrix}, \quad B=\begin{pmatrix} D & 0\\ 0 & D\end{pmatrix},$$
where $\Lambda=\diag(\lambda_1,\dots,\lambda_n)$, $D$ is regular semisimple, and  $[\Lambda,D]-\tau\cdot 1$ has rank $1$.
Moreover $i,j$ are of the form \eqref{i,j}.
We now act by the strictly triangular automorphism \eqref{action} with $p(a)=a$. This leaves invariant $A,i_1,j_2$ and changes $B$ to 
$$ B^\prime=\begin{pmatrix} D & -\tau E\\ 0 & D\end{pmatrix},$$
where every entry of $E$ is $1$. It also sends $i_2$ and $j_1$ to:
$$ i_2^\prime=(\lambda_1,\dots,\lambda_n,1,\dots,1)^T,\quad j_1^\prime=\tau(-1,\dots,-1,\lambda_1,\dots,\lambda_n).$$
We now act by an element of $SL(2,\cx)\times GL(2,\cx)$ to replace $(A,B^\prime)$ by $(-B^\prime, A)$ and $i_1,j_2$ by $i_2^\prime, j_1^\prime$. We act again by \eqref{action} with $p(a)=a$ and end up with the pair of matrices 
$ A^{\prime\prime}=-B^\prime,\; B^{\prime\prime}=A+i_2^\prime j_1^\prime $. We know from Lemmas \ref{invariant} and \ref{dim} that $ A^{\prime\prime}$ and $ B^{\prime\prime}$ have a common $n$-dimensional subspace $U$. Moreover, Lemmas \ref{onecase} and \ref{lala} imply that $U$ is generated by eigenvectors of $-B$ (one for every eigenvalue).  The only subspace invariant for 
$ A^{\prime\prime}$, which satisfies the latter condition is the one spanned by the first $n$ coordinate vectors, but this subspace is not invariant for  $B^{\prime\prime}$. This contradiction establishes Theorem \ref{transitive}.

\begin{ack} This paper was mostly written in 2007, when the first author was a Humboldt Fellow at the University of G\"ottingen. The Fellowship and the hospitality of the host university are gratefully acknowledged. 
\par
Both authors thank George Wilson for comments and  William Crawley-Boevey for explaining how complete integrability of instanton moduli spaces fits in with general results on quiver varieties and for the reference \cite{Bock}. 
\end{ack}

\end{document}